\def\cU{\mathcal A}
\def\cV{\mathcal V}
\def\cB{          \mathcal B}
\def\cH{          \mathcal H}
\let\cal\mathcal
\def \R{{\mathbb R}}
\def \Z{{\mathbb Z}}
\newcommand{\T}{{\mathbb T}}
\newcommand{\prf}{{\begin{proof}}}
\newcommand{\epf}{{\end{proof}}}
\newcommand{\Q}{{\mathbb Q}}
\newcommand{\ary}{\begin{eqnarray}}
\newcommand{\eary}{\end{eqnarray}}
\newcommand{\aryst}{\begin{eqnarray*}}
\newcommand{\earyst}{\end{eqnarray*}}
\newcommand{\enmt}{\begin{enumerate}}
\newcommand{\eenmt}{\end{enumerate}}
\newtheorem{thm}{\bf Theorem}[section]
\newtheorem{lemma}[thm]{\bf Lemma}
\newtheorem{prop}[thm]{\bf Proposition}
\newtheorem{cor}[thm]{\bf Corollary}
\newtheorem{claim}[thm]{\bf Claim}
\newtheorem{quest}[thm]{\bf Question}
\newtheorem{theorem}[thm]{\bf Theorem}
\newtheorem{proposition}[thm]{\bf Proposition}
\theoremstyle{definition}
\newtheorem{definition}[thm]{\bf Definition}
\newtheorem{remark}[thm]{\bf Remark}
\DeclareMathOperator{\diff}{Diff}
\theoremstyle{definition}
\def\bee{\begin{equation}}
\def\eee{\end{equation}}
\def \cH {{\cal H}}
\def\bL{  {\bf{L}} }
\def\bPhi{  {\Phi} }
\newcommand{\Diff}{{\rm{Diff}}}
\newcommand{\pdvr}[2]
{\dfrac{\partial^{#2} #1}{\partial \theta^{#2_1} \partial r^{#2_2}}}
\newcommand{\pdvrs}[2]
{\partial^{#2} #1 /\partial \theta^{#2_1} \partial r^{#2_2}}
\newcommand{\sgn}{\mathop{\rm sgn}}
\newtheorem{example}{Example}
\numberwithin{equation}{section}
\definecolor{blue}{rgb}{0,0,1}
\definecolor{red}{rgb}{1,0,0}
\definecolor{green}{rgb}{0,.7,0}
\begin{document}

\title{\textbf{Density of mode-locking property for quasi-periodically forced Arnold circle maps}   }

	\author{Jian Wang}
	\address{Jian Wang, School of Mathematical Sciences and LPMC, Nankai University, Tianjin 300071, P. R. China}
	\email{wangjian@nankai.edu.cn}

	\author{Zhiyuan Zhang}
	\address{Zhiyuan Zhang, Institut Galil\'ee
		Universit\'e Paris 13, CNRS UMR 7539,
		99 avenue Jean-Baptiste Cl\'ement
		93430 - Villetaneuse, France}
	\email{zhiyuan.zhang@math.univ-paris13.fr}

\maketitle

\begin{abstract} 

We show that the mode-locking region of the family of quasi-periodically forced Arnold circle maps with a topologically generic forcing function is dense. This gives a rigorous verification of certain numerical observations in \cite{DGO} for such forcing functions. More generally, under some general conditions on the base map, we show the density of the mode-locking property among dynamically forced maps (defined in \cite{Zha}) equipped with a  topology that is much stronger than the $C^0$ topology, compatible with smooth fiber maps. For quasi-periodic base maps, our result generalizes the main results in \cite{ABD,  WZJ, Zha}.

\end{abstract}

\section{Introduction}\label{sec Introduction}

Quasi-periodically forced maps (qpf-maps) are natural generalizations of Schr\"odinger cocycles, which played an important role in the recent study of Schr\"odinger operator on $\Z$ with quasi-periodic potentials.
The notion of uniform hyperbolicity naturally generalizes to the so-called mode-locking property of qpf-maps. Thus, the topological genericity of mode-locking among Schr\"odinger cocycles (for a given base map) would immediately imply that for topologically generic potential, the spectrum is a Cantor subset of $\R$. We refer the readers to surveys \cite{D, Jit, Sim} on more results on Schr\"odinger operators.
It is natural to ask whether mode-locking holds generically among the set of general qpf-maps.
The first result in this direction is provided by  \cite{WZJ}. The authors showed that for a topologically generic frequency $\omega$, the set of mode-locked qpf-maps with frequency $\omega$ is residual (with respect to the uniform topology).
Their result is generalized in \cite{Zha} to any fixed irrational frequency. 
In  \cite{Zha}, the following natural generalization of the notion qpf-map is introduced (such consideration is not new, and has already appeared in \cite[Section 5]{H}).
\begin{definition}[$g$-forced maps and rotation number] \label{def dynamicallyforcedmap}
Let $\Diff^{1}(\mathbb{T})$, resp. ${\rm Homeo}(\mathbb{T})$, denote the set of orientation preserving diffeomorphisms, resp. homeomorphisms, of $\mathbb{T}$, and let  $\Diff^1(\mathbb{R})$, resp. ${\rm Homeo}(\mathbb{R})$,  denote the set of orientation preserving diffeomorphisms, resp. homeomorphisms, of $\mathbb{R}$. We denote by $\pi_{\R \to \T}$ the canonical projection from $\R$ to $\mathbb{T} \simeq \R/\Z$. We define $\Diff^{r}(\mathbb{T})$, $\Diff^{r}(\mathbb{R})$ for $r \in \Z_{>1} \cup \{ \infty \}$ in a similar way.

Given a uniquely ergodic homeomorphism $g: X \to X$, we say that $f: X \times \mathbb{T} \to  X \times \mathbb{T}$ is a {\em $g$-forced circle diffeomorphism}, resp.  {\em $g$-forced circle homeomorphism},  if there is a homeomorphism $F: X \times \R \to X \times \R$ of form
\aryst
F(x, w) = (g(x), F_x(w)), \quad \forall (x, w) \in X \times \R, 
\earyst
where $F_x \in \Diff^1(\R)$, resp. ${\rm Homeo(\R)}$, for every $x \in X$, such that $({\rm Id} \times \pi_{\R \to \mathbb{T}}) \circ  F = f \circ ({\rm Id} \times \pi_{\R \to \mathbb{T}})$. In this case, we say that $F$ is a {\em lift} of $f$.

For each lift $F$ of a $g$-forced map $f$, the following limit
\aryst
\rho(F) = \lim_{n \to +\infty} \frac{\pi_{2} F^{n}(x, w)}{n}
\earyst
exists, and is independent of $(x, w)$ (here $\pi_2$ is the canonical projection from $X \times \R$ to $\R$). Moreover, the number $\rho(f) = \rho(F) \mod 1$ is independent of the choice of the lift $F$.
\end{definition}

\begin{definition}[Mode-locking for $g$-forced maps]\label{def modelockinggforced}
Given a uniquely ergodic map $g: X \to X$, we say that a $g$-forced circle homeomorphism $f$ is {\em mode-locked} if $\rho(f') = \rho(f)$ for every $g$-forced circle homeomorphism  $f'$ that is sufficiently close to $f$ in the $C^0$-topology. We denote the set of mode-locked $g$-forced circle homeomorphism by $\cal{ML}$.
\end{definition}

The main result in \cite{Zha} says that mode-locking is a topologically generic property among the set of $g$-forced circle homeomorphism under a mild condition on $g$.
Such a result generalizes the main results in \cite{ABD2, WZJ}.

The current paper is a continuation of the above line of research. 

It is natural to ask whether mode-locking could be generic among qpf-maps with higher regularity, with respect to the smooth topology. By the main result in \cite{JW}, mode-locking can be rare in the measurable sense in many $C^1$ families. We hereby ask whether mode-locking could be generic in the topological sense.

\smallskip

\begin{quest} \label{main quest} 
Is a $C^1$ generic quasi-periodically forced circle diffeomorphism mode-locked?
\end{quest}

\smallskip

One can also ask for $C^r$-genericity for any $r\geq 2$. However, Question \ref{main quest} already seems to be far from easy to answer.
We also mention that a related question on qpf-circle maps has been asked in \cite[Question 33]{FK}, motivated by Eliasson's theorem in \cite{E}. Notice that a reducible qpf-circle map in \cite{FK} is accumulated by mode-locked qpf-maps. 

This paper is an attempt to study Question \ref{main quest}. Although we couldn’t give a direct answer to Question \ref{main quest}, we can show that mode-locking can be generic with respect to topologies which are much stronger than the $C^0$ topology. This is the content of our main theorem, Theorem \ref{thm:29}.
Thus we provide some evidence to a positive answer to Question \ref{main quest}.
It turns out that our theorem has implications for the quasi-periodically forced Arnold circle maps (see Theorem \ref{thm arnold1}), a class of maps that were previously studied numerically by physicists (see \cite{DGO}). We will elaborate on this point in Section \ref{sec arnold}.

\subsection{Statements of the main results} \label{sec Mainresults}
Let $X$ be a compact metric space. Let $g : X \rightarrow X$ be a strictly ergodic (i.e. uniquely ergodic and minimal) homeomorphism with a non-periodic factor of
finite-dimension, that is, there is a homeomorphism $\bar{g}:Y\rightarrow Y$, where $Y$ is an infinite compact subset of some Euclidean space $\mathbb{R}^d$, and there is a onto continuous map $h: X \rightarrow Y$ such that $h \cdot g = \bar{g}\cdot h$.
We will show that for $g$ satisfying some general condition, mode-locking is generic in topologies that are much stronger than the $C^0$ topology.

To state the condition we need, we introduce the following notion.
\begin{definition}\label{8.2.1}
 Let $g : X \rightarrow X$ be given as above. Let $G \subset \mathbb{R}$ be the subgroup of all $t$ such that there exist continuous maps $\phi : X \rightarrow \mathbb{R}$ and $\psi : X \rightarrow \mathbb{R}/\mathbb{Z}$ with $t = \int \phi d\nu$ and $\psi(g(x)) - \psi(x) = \phi(x) \mod  1$. We call $G(g)$ the {\it range of the Schwartzman asymptotic cycle for $g$}.
\end{definition} 

Clearly, the problem about the density of mode-locking depends on the topology we choose to put on the space of maps. In order to treat the problem with respect to different topologies in a uniform fashion, we introduce the following definition.

Given $r \in \Z_{\geq 1}$ and  $f, h \in \Diff^{r}(\T)$, we set
\aryst 
d_{C^{r}(\T, \T)}(f, h) = \sum_{k = 0}^{r} \sup_{w \in \T} d(D^{k}f(w), D^{k}h(w)).
\earyst 
We define for any $r \in \Z_{\geq 1}$ that
\aryst 
d_{\Diff^{r}(\T)}(f, h) = d_{C^{r}(\T, \T)}(f, h) + d_{C^{r}(\T, \T)}(f^{-1}, h^{-1}).
\earyst 
We define
\aryst 
d_{\Diff^{\infty}(\T)}(f, h) = \sum_{k = 1}^{\infty} 2^{-k}\frac{d_{\Diff^{k}(\T)}(f, h)}{d_{\Diff^{k}(\T)}(f, h) + 1}.
\earyst 

\begin{definition} \label{def cal H}
Given a  complete metric space ${\cal H}$ endowed with a complete metric $d_{\cal H}$, and a continuous map $\iota_{\cal H}: {\cal H} \to \Diff^1(\T)$. 
By a slight abuse of notations, we denote $\iota_\cH : \cH^n \to \Diff^1(\T)$ by $\iota_\cH((h_i)_{i=0}^{n - 1}) = \iota_\cH(h_{n - 1}) \circ \cdots \circ \iota_\cH(h_{0})$ for every integer $n \geq 1$.
\end{definition}

In the rest of the paper, we will only consider two classes of tuples $(\cH, d_\cH, \iota_\cH)$ given below.

\begin{example} \label{exam I} 
Let $r \in \Z_{\geq 1} \cup \{ \infty \}$.
Let $\cH = \widetilde{\Diff^{r}(\T)} = \{ f \in \Diff^{r}(\R) \mid f(y + 1) = f(y) +1, \forall y \in \R \}$.
We may define distance $d_{\widetilde{\Diff^{r}(\T)}}$ on $\widetilde{\Diff^{r}(\T)}$ analogously to $d_{\Diff^{r}(\T)}$.
Let 
\aryst
d_{\cH} = d_{\widetilde{\Diff^{r}(\T)}}.
\earyst
We define $\iota_\cH: \cH \to \Diff^{1}(\T)$ by
\aryst
\iota_\cH(h)(y \mod 1) = h(y) \mod 1, \ \forall y \in \R.
\earyst
\end{example}

\begin{example}  \label{exam III}
Let  $\cH = \R$.
Let $P \in C^{\omega}(\T)$ be a non-constant real analytic function such that $\| P' \| < 1$.
We define $\iota_\cH: \cH \to \Diff^{1}(\T)$ by
\aryst
\iota_\cH(h)(w) = w + P(w) + h \mod 1, \ \forall w \in \R/\Z.
\earyst
We let $d_{\cH} = d_{\R}$ where $d_\R$ denotes the Euclidean distance.
\end{example}

\begin{remark} \label{rem comparenorms}

By definition, for $(\cH, d_\cH, \iota_\cH)$ in either Example \ref{exam I} or \ref{exam III},  we know that there exists some constant $C > 0$ such that for any $h_1, h_2 \in {\cal H}$, we have $$C d_{\cal H}(h_1, h_2) \geq d_{\Diff^{1}(\mathbb{T})}(\iota_{\cH}(h_1), \iota_{\cH}(h_2)).$$
\end{remark}

 As usual, let us denote by $C^0(X, {\cal H})$ the collection of continuous maps from $X$ to ${\cal H}$. We equipe $C^0(X, \cH)$ with the norm
\aryst
D_{\cH}(H,  H') = \sup_{x \in X}d_{\cH}(H(x), H'(x)).
\earyst
Given any $H \in C^{0}(X, {\cal H})$ and any $\epsilon > 0$, we denote
\aryst
\cB_{\cal H}(H, \epsilon) = \{ H' \in C^{0}(X, {\cal H}) \mid D_{\cal H}(H, H') < \epsilon  \}.
\earyst

We denote by $\Diff^{0,1}_g(X \times \mathbb{T})$ the collection of $g$-forced circle diffeomorphism of form
\aryst
f: X \times \mathbb{T} &\to& X \times \mathbb{T}, \\
(x, w) &\mapsto& (g(x), f_x(w))
\earyst
where $f_x \in \Diff^{1}(\T)$ depends continuously on $x \in X$. We denote
\aryst
\|f\|_{C^{0,1}} &=& \sup_{x \in X} (\|Df_x\|, \|D(f_x^{-1})\|) < \infty, \\
D_{C^{0,1}}(f, f') &=&   \sup_{x \in X} d_{\Diff^{1}(\T)}(f_x, (f')_x) < \infty.
\earyst
By Remark \ref{rem comparenorms}, there is a continuous map $\bPhi : C^0(X, {\cal H}) \to \Diff^{0,1}_g(X \times \mathbb{T})$ defined by
\aryst
\bPhi(H)(x, w) = (g(x),  \iota_{\cH}(H(x))(w)).
\earyst

The main theorem of this paper is the following.
\begin{theorem}\label{thm:29}
Let $({\cal H}, d_{\cal H}, \iota_{\cal H})$ be given by Example \ref{exam I}  or \ref{exam III}.  If $ G(g)$ is dense in $\mathbb{R}$, then 
\ary \label{def MLH}
\cal{ML}(\cH, \iota_{\cH}) &:=& \{ H \in C^0(X, {\cal H}) \mid \bPhi(H) \in \cal{ML} \}
\eary
is open and dense in $C^0(X, {\cal H})$.
\end{theorem}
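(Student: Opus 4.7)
The set $\cal{ML}$ is, by Definition \ref{def modelockinggforced}, open in $\Diff^{0,1}_g(X \times \T)$ with its $C^0$-topology. By Remark \ref{rem comparenorms}, $\bPhi : C^0(X, \cH) \to \Diff^{0,1}_g(X \times \T)$ is continuous into this $C^0$-topology. Hence $\cal{ML}(\cH, \iota_\cH) = \bPhi^{-1}(\cal{ML})$ is open in $C^0(X, \cH)$.

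\textbf{Density -- setup.} Fix $H_0 \in C^0(X, \cH)$ and $\epsilon > 0$; set $\rho_0 = \rho(\bPhi(H_0))$. The plan is to produce an $\cH$-small perturbation of $H_0$ whose image under $\bPhi$ admits a continuous invariant section, which is the standard equivalent characterization of mode-locking for the rotation number $\rho_0$. Embed $H_0$ in a natural one-parameter translation family $\{H_s\} \subset C^0(X, \cH)$: $H_s = H_0 + s$ in Example \ref{exam III}, and $H_s(x) = \iota_\cH^{-1}(R_s \circ \iota_\cH(H_0(x)))$ with $R_s(w) = w + s$ in Example \ref{exam I}. The map $s \mapsto \rho(\bPhi(H_s))$ is continuous and non-decreasing, so its image on some small interval around $0$ is an interval $I \ni \rho_0$. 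Since $G(g)$ is dense in $\R$, pick $\alpha \in G(g) \cap I$ with $|\alpha - \rho_0|$ small, and choose continuous $\phi : X \to \R$ and $\psi : X \to \T$ witnessing $\alpha$ in the sense of Definition \ref{8.2.1}: $\int \phi \, d\nu = \alpha$ and $\psi \circ g - \psi = \phi \mod 1$.

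\textbf{Density -- producing the mode-locked map.} Select $s^* \in (-\epsilon, \epsilon)$ with $\rho(\bPhi(H_{s^*})) = \alpha$. The fiberwise coordinate change $T_\psi : (x, w) \mapsto (x, w + \psi(x))$ conjugates $\bPhi(H_{s^*})$ to a $g$-forced map whose rotation number $\alpha$ is realized explicitly by the cocycle $\phi$. In these coordinates, adapt the perturbation construction of \cite{Zha} to obtain a further perturbation $\delta \in C^0(X, \cH)$ with $D_\cH(\delta, 0)$ arbitrarily small, such that $\bPhi(H_{s^*} + \delta)$ admits a continuous invariant section (so is mode-locked). Then $H_{s^*} + \delta \in \cB_\cH(H_0, \epsilon)$, completing the density argument.

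\textbf{Main obstacle.} The critical difficulty is arranging $D_\cH$-smallness of the perturbation $\delta$ in the previous paragraph. The $C^0$-theory used in \cite{Zha} allows essentially arbitrary continuous perturbations of the fiber maps, whereas in our setting (Examples \ref{exam I}-\ref{exam III}) any perturbation must preserve the rigid parametric form that defines $\cH$. The density of $G(g)$ in $\R$ is structurally essential here: it guarantees that every target rotation number near $\rho_0$ is realized as a continuous Schwartzman cocycle $\phi$ that can be chosen of arbitrarily controlled size, and hence absorbed into a $D_\cH$-small perturbation. Example \ref{exam III}, in which $\cH = \R$ and only real shifts are available, is the most restrictive instance of this scheme and is where compatibility between the mode-locking construction and the $\cH$-class is most delicate to establish.
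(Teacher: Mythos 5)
\textbf{Openness} is fine: $\cal{ML}$ is $C^0$-open by Definition \ref{def modelockinggforced} and $\bPhi$ is continuous by Remark \ref{rem comparenorms}, so $\bPhi^{-1}(\cal{ML})$ is open. The density argument, however, has two problems. First, a continuous invariant section is \emph{not} an equivalent characterization of mode-locking: the map $(x,w)\mapsto(g(x),w)$ has every constant section invariant but satisfies $\rho(F_\epsilon)=\epsilon\neq 0=\rho(F)$, so it is not mode-locked. What is actually needed is a strictly trapped open strip, i.e.\ an open set ${\cal R}\subset K\times\T$ over a spanning section $K$ with $f_K(\overline{\cal R})\subset{\cal R}$ (Lemma \ref{lemma 66}); producing such a trapping region is strictly harder than producing an invariant graph.

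Second, and decisively, the step ``adapt the perturbation construction of \cite{Zha} to obtain a $D_\cH$-small $\delta$'' is precisely the content of the theorem, and you correctly identify it as the main obstacle but do not resolve it. In \cite{Zha} the fiber maps may be perturbed by arbitrary $C^0$-small homeomorphisms; here every perturbation must stay inside the rigid class $\iota_\cH(\cH)$ (in Example \ref{exam III} only the constant $h$ may be moved) and be small in $d_\cH$, which in Example \ref{exam I} is a $C^r$ or $C^\infty$ distance. The paper's resolution of this occupies Sections 4--7: (i) show that off $\overline{\cal{ML}(\cH)}$ one can perturb inside $\cH$ to make both extremal fiberwise Lyapunov exponents zero (Theorem \ref{Theorem 30}, whose engine is the technical Lemma \ref{lem mainpropertymovableset}, itself relying on the ``contractable'' mechanism of Lemmas \ref{lem arnoldfamilyismovable} and \ref{lem getparabolicelement} to manufacture parabolic-like and contracting behaviour within $\cH$); (ii) with zero exponents, use Proposition \ref{prop 38} to contract fiber intervals onto the target section $\hat\psi + \sum\phi\circ g^j$ along return times to a $d$-mild, spanning set $K$; (iii) glue these local perturbations consistently via the ABD stratification $K^{-1}\subset K^0\subset\cdots\subset K^d$ at $d+1$ nested scales $\epsilon_{-1}<\cdots<\epsilon_d$. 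None of this is supplied or replaced by an alternative argument in your proposal. A further omission: Example \ref{exam III} is reduced to Example \ref{exam II} by rescaling when $P$ has a smaller period (the final step of Section \ref{sec Density of mode-locking}); your sketch does not address why the no-smaller-period hypothesis, which is used in Lemma \ref{lem moveonefixone}, can be dispensed with.
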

When $\iota_{\cH}$ is clear from the context, we will abbreviate $\cal{ML}(\cH, \iota_{\cH})$ as $\cal{ML}(\cH)$.

 Instead of Example \ref{exam III}, it is more convenient to consider the following.
\begin{example}  \label{exam II}
Let  $P$ and $(\cH, d_{\cH}, \iota_{\cH})$ be as in Example \ref{exam III}. We assume in addition that $P$ has no smaller period, i.e., there exists no constant $\rho \in (0,1)$ such that $P(w + \rho) \equiv P(w)$.
\end{example}
We will actually give the proof for the following theorem, Theorem \ref{thm:29'}, since the discussions for Example  \ref{exam I}  and  \ref{exam II} can be organized in a unified way.   In Section \ref{sec Density of mode-locking}, we will prove Theorem \ref{thm:29'}, and deduce Theorem \ref{thm:29} as a corollary.
\begin{theorem}\label{thm:29'}
Let $({\cal H}, d_{\cal H}, \iota_{\cal H})$ be given by Example  \ref{exam I}  or  \ref{exam II}.  If $ G(g)$ is dense in $\mathbb{R}$, then  $\cal{ML}(\cH) = \cal{ML}(\cH, \iota_{\cH}) $ is open and dense in $C^0(X, {\cal H})$.
\end{theorem}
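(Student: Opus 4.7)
The plan has two parts: openness of $\cal{ML}(\cH)$ is a definitional matter, while density requires a multi-step perturbative construction built around the Schwartzman cycle structure captured by $G(g)$.

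Openness is immediate. By Remark \ref{rem comparenorms}, the map $\bPhi : C^0(X, \cH) \to \Diff^{0,1}_g(X \times \T)$ is continuous when the target is equipped with the $C^0$ topology, and $\cal{ML}$ is $C^0$-open by Definition \ref{def modelockinggforced}; hence the preimage $\cal{ML}(\cH) = \bPhi^{-1}(\cal{ML})$ is open in $C^0(X, \cH)$.

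For density, fix $H \in C^0(X, \cH)$ and $\epsilon > 0$. The one-parameter constant-shift family $H \mapsto H + c$ (interpreted as a scalar shift in Example \ref{exam II}, and as post-composition of each $H(x)$ by the rigid rotation $R_c$ in Example \ref{exam I}) induces a continuous, monotone variation $c \mapsto \rho(\bPhi(H + c))$. Using the density of $G(g) \subset \R$, one can therefore produce $H_1 \in \cB_\cH(H, \epsilon/3)$ with $\rho(\bPhi(H_1)) = t$ for some $t \in G(g)$. Let continuous $\phi : X \to \R$ and $\psi : X \to \R/\Z$ witness $t$, so $t = \int \phi \, d\nu$ and $\psi \circ g - \psi = \phi \mod 1$. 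The fiberwise translation $\Psi(x, w) = (x, w - \psi(x))$ is a homeomorphism of $X \times \T$, and the cocycle identity arranges that on lifts the conjugate $\Psi \circ \bPhi(H_1) \circ \Psi^{-1}$ absorbs the $t$-drift of $\phi$ up to integer jumps; since conjugation by a homeomorphism preserves mode-locking, the task reduces to mode-locking a $g$-forced map whose dynamics admit a pair of semi-continuous $g$-invariant bounding graphs $\gamma^- \leq \gamma^+$ supplied by general rotation-number-$0$ theory.

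The main obstacle is the next step: I must perturb $H_1$ to an $H' \in \cB_\cH(H_1, 2\epsilon/3)$ so that $\gamma^-$ and $\gamma^+$ become strictly ordered continuous invariant graphs, which is a standard certificate of mode-locking, while the perturbation stays inside the restrictive family $C^0(X, \cH)$. In Example \ref{exam II} this family supplies only one scalar degree of freedom per fiber, in sharp contrast to the $C^0$-density arguments of \cite{WZJ, Zha} which permit arbitrary fiberwise perturbations. I plan to localize the perturbation to a small open set $U \subset X$ meeting a long $g$-orbit arc on which $\gamma^-$ and $\gamma^+$ come close, and use a continuous bump modification of $H_1$ on $U$ to displace them. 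The no-smaller-period assumption on $P$ in Example \ref{exam II} (respectively the full diffeomorphism freedom in Example \ref{exam I}) is precisely what prevents the scalar perturbation from acting as a tangent direction to the common invariant-graph equation of $\gamma^-$ and $\gamma^+$, so the two graphs actually move apart rather than in parallel. Minimality and unique ergodicity of $g$ then propagate this local strict separation to every orbit, furnishing the required pair of strictly ordered continuous invariant graphs and hence mode-locking.
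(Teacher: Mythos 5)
Your openness argument is fine: $\cal{ML}$ is $C^0$-open by Definition \ref{def modelockinggforced}, and Remark \ref{rem comparenorms} makes $\bPhi$ continuous into the $C^0$ topology, so $\cal{ML}(\cH)=\bPhi^{-1}(\cal{ML})$ is open. The density argument, however, has genuine gaps and does not follow the paper's route. The first gap is your appeal to ``a pair of semi-continuous $g$-invariant bounding graphs $\gamma^-\leq\gamma^+$ supplied by general rotation-number-$0$ theory.'' No such theory exists for forced circle homeomorphisms: when mode-locking fails, the map may be in the ``strictly monotone'' regime of Bjerkl\"ov--J\"ager, where the only invariant objects are at best measurable, highly discontinuous graphs (strange non-chaotic attractors), or there may be no invariant graph at all. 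Reducing the fiber drift by conjugating with $\Psi(x,w)=(x,w-\psi(x))$ does not produce the invariant objects your subsequent perturbation is supposed to act on, so the step the rest of your argument rests on is unsupported.

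The second, deeper gap is that you skip the reduction to zero Lyapunov exponents, which is the core of the paper's proof (Theorem \ref{Theorem 30}, proved via an Avila--Bochi type scheme in Sections 4 and 7). Your plan to ``displace'' the graphs by a localized bump and certify mode-locking via strictly ordered continuous invariant graphs requires producing, by an $\epsilon$-small perturbation inside the rigid family $\cH$, a forward-invariant open tube that is strictly contracted by the return dynamics (this is the paper's Lemma \ref{lemma 66} together with Proposition \ref{prop 38}). If $L_+(H)>0$, the derivative of long compositions is exponentially large at some point of every fiber, and no $\epsilon$-small perturbation in $\cH$ can create the needed uniform contraction of an interval around the prospective curve; this is exactly why the paper first perturbs to make $L_+=L_-=0$ and only then runs the contraction construction. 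Finally, your closing remark that ``minimality and unique ergodicity propagate the local strict separation to every orbit'' glosses over the real obstruction at that stage: the first-return map to a spanning set $K$ is discontinuous at $\partial K$, and assembling the fiberwise perturbations along return words into a single \emph{continuous} $H'\in C^0(X,\cH)$ requires the $d$-mild stratification $K^{-1}\subset K^0\subset\cdots\subset K^d$ of \cite{ABD2} and the induction over strata carried out in the paper's proof (the maps $H^{(k)}$ and Lemma \ref{lemma 72}); minimality alone does not yield continuity of the glued perturbation.
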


We note that $G(g)$ is dense for many interesting maps $g$, such as
\begin{enumerate}
    \item  minimal translations of $\mathbb{T}^d$ for any $d \geq 1$,
    \item the skew-shift $(x, y)  \mapsto (x + \alpha, y + x)$ on $\mathbb{T}^2$, where $\alpha$ is irrational,
    \item any strictly ergodic homeomorphism on a totally disconnected infinite compact subset of $\mathbb{R}^d$  for any $d\geq 1$.
\end{enumerate} 

The proof of Theorem \ref{thm:29} is based on a description about the complement of $\overline{\cal{ML}(\cH)}$  (see Theorem \ref{Theorem 30}. Before stating the result we introduce the following notions.

For a $g$-forced circle homeomorphism $f$ with a lift $F$, for any integer $n \geq 1$, we denote as in \cite{Zha} that
\aryst
(f^n)_x :=  f_{g^{n-1}(x)} \circ \cdots \circ f_x, \quad (F^n)_x := F_{g^{n-1}(x)} \circ \cdots \circ F_x.
\earyst

\begin{definition}\label{DEFINITION 8.2.3.}
 For any $f \in \Diff^{0,1}_g(X \times\mathbb{T})$, the {\it extremal fiberwise Lyapunov exponents}  of $f$, denoted by $L_{+}(f)$ and $L_{-}(f)$, are given by formulas
\[ \begin{array}{ccc}
   L_{+}(f) &:= & \lim\limits_{n\rightarrow\infty} \frac{1}{n} \sup\limits_{(x ,w)\in X\times\mathbb{T}} \log D(f^n)_x(w),\\
   L_{-}(f) & :=& \lim\limits_{n\rightarrow\infty} \frac{1}{n} \inf\limits_{(x ,w)\in X\times\mathbb{T}} \log D(f^n)_x(w).
 \end{array}
\]
Let $({\cal H}, d_{\cal H}, \iota_{\cal H})$ be in Example \ref{exam I}  or \ref{exam II}. Then for any $H \in C^0(X, {\cal H})$, we denote
\aryst
L_{+}(H) = L_{+}(\Phi(H)), \quad L_{-}(H) = L_{-}(\Phi(H)).
\earyst
\end{definition}

\begin{theorem}\label{Theorem 30}
Let $({\cal H}, d_{\cal H}, \iota_{\cal H})$ be given by Example \ref{exam I}  or \ref{exam II}.
 Then either $\cal{ML}(\cH)$ is dense in $C^0(X, {\cal H})$, or there exists a residual subset $\cU$ of $C^0(X, {\cal H}) \setminus \overline{\cal{ML}(\cH)}$
 such that $\bPhi(H)$ has zero extremal fiberwise Lyapunov exponents for any $H \in \cU$. Here
the notion of a residual subset is defined with respect to the distance $D_{\cal H}$ on $C^0(X, {\cal H})$.
\end{theorem}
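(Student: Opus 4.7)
The plan is to produce $\mathcal{U}$ as the set of common continuity points of $L_\pm \circ \bPhi$ inside the open set $\mathcal{V} := C^0(X, \cH) \setminus \overline{\mathcal{ML}(\cH)}$, and then to invoke a Bochi-type rigidity saying that such a continuity point cannot have a nonzero extremal fiberwise Lyapunov exponent. I first establish semicontinuity: by the chain rule the sequence $n \mapsto \sup_{(x,w) \in X \times \T} \log D(f^n)_x(w)$ is subadditive, and Fekete's lemma gives
\[
L_+(f) \;=\; \inf_{n \geq 1} \tfrac{1}{n} \sup_{(x,w) \in X \times \T} \log D(f^n)_x(w),
\]
and symmetrically $L_-(f) = \sup_{n \geq 1} \tfrac{1}{n} \inf_{(x,w)} \log D(f^n)_x(w)$. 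Each finite-$n$ functional is continuous on $\Diff^{0,1}_g(X \times \T)$, and $\bPhi$ is continuous by Remark \ref{rem comparenorms}, so $L_+ \circ \bPhi$ is upper semicontinuous and $L_- \circ \bPhi$ is lower semicontinuous on the Baire space $(C^0(X, \cH), D_\cH)$. Hence the set $\mathcal{U}_1$ of common continuity points of $L_\pm \circ \bPhi$ is residual in $C^0(X, \cH)$.

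I may now assume that $\mathcal{ML}(\cH)$ is not dense, since otherwise the first alternative of the theorem already holds. Then $\mathcal{V}$ is a nonempty open (hence Baire) subset of $C^0(X, \cH)$, and $\mathcal{U} := \mathcal{U}_1 \cap \mathcal{V}$ is residual in $\mathcal{V}$. The theorem is reduced to the rigidity claim: for every $H \in \mathcal{U}$ one has $L_+(\bPhi(H)) = L_-(\bPhi(H)) = 0$. Arguing by contradiction for $L_+$ (the case of $L_-$ is parallel, with attracting sections replacing repelling ones), suppose $L_+(\bPhi(H_0)) = 2c > 0$ for some $H_0 \in \mathcal{U}$. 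Continuity of $L_+ \circ \bPhi$ at $H_0$ together with openness of $\mathcal{V}$ supplies a $D_\cH$-neighborhood $W \subset \mathcal{V}$ of $H_0$ on which $L_+ \circ \bPhi \geq c$ uniformly.

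The heart of the argument, and its main obstacle, is to convert this uniform positive exponent on $W$ into the existence of a mode-locked element in $W$, contradicting $W \subset \mathcal{V}$. In the spirit of Bochi's theorem for linear cocycles, uniform positive extremal fiberwise exponent on a $C^0$-open family should produce a dominated splitting---continuous attracting and repelling invariant sections of the bundle $X \times \T \to X$---for every $\bPhi(H')$ with $H' \in W$, via an Oseledets-plus-regularity argument adapted to $g$-forced circle diffeomorphisms as in \cite{Zha}. Once such a splitting exists continuously and persists under $D_\cH$-small perturbations, the fiberwise rotation number becomes locally constant in $H'$, hence $\bPhi(H_0) \in \mathcal{ML}$, contradicting $H_0 \in \mathcal{V}$. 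The subtle part, which is where most of the work lies, is promoting the a.e.-defined Oseledets unstable direction to a genuinely continuous section under only the uniform $C^0$-type hypothesis $L_+ \geq c$; here the minimality of $g$ is essential, and the restricted form of the admissible perturbations (especially in Example \ref{exam II}, where $\bPhi(C^0(X, \cH))$ is effectively a one-parameter family of fiber rotations) requires extra care in verifying local constancy of $\rho$ within the perturbation class parameterized by $\cH$.
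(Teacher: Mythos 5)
Your reduction to semicontinuity is fine and is essentially how the paper itself passes from a density statement to a residual set: $L_+\circ\bPhi$ is upper semicontinuous and $L_-\circ\bPhi$ is lower semicontinuous, so sets of the form $\{|L_\pm|<1/n\}$ are open and the common continuity points form a residual set. But the entire mathematical content of Theorem \ref{Theorem 30} is the step you defer to ``a Bochi-type rigidity'': that at a continuity point of $L_\pm\circ\bPhi$ inside $C^0(X,\cH)\setminus\overline{\mathcal{ML}(\cH)}$ the exponents must vanish. This is not a black box you can invoke; it is exactly the dichotomy whose proof requires the hard perturbation lemma, and you have not supplied it. In Bochi's theorem the implication ``continuity point with nonzero exponent $\Rightarrow$ uniform hyperbolicity'' is proved by contraposition: absent domination, one perturbs along a long orbit segment to match temporary stable and unstable directions and thereby strictly lowers the exponent, contradicting continuity. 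Your sketch instead tries to go forward from ``$L_+\geq c$ on an open set'' to ``continuous invariant attracting/repelling sections'' via ``Oseledets-plus-regularity,'' and you yourself concede that promoting the a.e.\ Oseledets data to a continuous section ``is where most of the work lies.'' That work \emph{is} the theorem; note also that $L_+\geq c$ only asserts the existence of some expanding orbits and by itself gives no invariant section.

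The gap is compounded by two features of this setting that make the perturbative step genuinely harder than in the linear case, and which the paper spends Sections 4--7 on. First, for nonlinear circle fibers there is no analogue of the $SL(2,\R)$ feature that matching stable and unstable directions at one point cancels growth at every other point of the fiber; the paper's Lemma \ref{lem mainpropertymovableset} must split each fiber into two arcs and obtain cancellation separately in forward and backward time (Lemmas \ref{lemma 67}--\ref{lemma 69}). Second, in Example \ref{exam II} the admissible perturbations form a one-parameter family per fiber, so one must first verify that enough perturbations are available (the ``contractable'' condition and Lemmas \ref{lem arnoldfamilyismovable}, \ref{lem getparabolicelement}), and one must use the hypothesis $H\notin\overline{\mathcal{ML}(\cH)}$ quantitatively (Lemma \ref{lem uniformgrowth}) to reclose orbits after perturbing. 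None of this appears in your proposal, so as written it reduces the theorem to an unproved assertion of essentially the same depth. The paper's actual route is the quantitative density statement (Propositions \ref{PROPOSITION 36} and \ref{proposition 37}): every contractable non-mode-locked $H$ with $L_+>L_-$ can be perturbed to decrease $\max(-L_-,L_+)$ by a definite factor, iteration gives density of near-zero exponents, and only then does the semicontinuity argument you describe finish the proof.
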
 
 Theorem \ref{Theorem 30} is similar to \cite[Theorem 5]{ABD2}, where the authors have shown that 
 any cocycle in $C^{0}(X, { SL}(2, \R))$ that is not uniformly hyperbolic can be approximated by cocycles that are conjugate to elements in $C^{0}(X, {SO}(2, \R))$. A similar approach is taken in \cite{Zha} to prove the density with respect to the $C^0$ topology.

 \subsection{Application to Quasi-periodically forced Arnold circle maps} \label{sec arnold}

A prominent example of qpf-map is the so called \lq\lq quasi-periodically forced Arnold circle map \rq\rq,
\aryst
f_{\alpha, \beta, \tau, q}: \T^2 \to \T^2, \quad (\theta,x) \mapsto (\theta+\omega, x + \tau + \frac{\alpha}{2\pi}\sin(2\pi x) + \beta q(\theta) \mod \ 1),
\earyst
with parameters $\alpha \in [0,1], \tau, \beta \in \R$ and a continuous forcing function $q : \T \to \R$. It can be traced back to \cite{GOPY}, and was then studied in \cite{RO, BOA, DGO}, etc., as a simple model of an oscillator forced at two incommensurate frequencies. It has served as a source of motivation for a series of articles on this subject, such as \cite{BJ, J, J2, JW, KWYZ, Zha}, etc.

Mode-locking was observed numerically on open regions in the $(\tau, \alpha)$-parameter space, known as the Arnold tongues. An immediate question is whether for any given function $\beta q$, mode-locking property holds for an open and dense set of $(\tau, \alpha)$-parameters. There are numerical evidences to support such conjecture. 
In \cite[V. Conclusions]{DGO}, the authors wrote:{  \it \lq\lq  Various numerical experiments are performed to illustrate the different types of attractors that can arise in typical quasiperiodically forced systems. The central result is that in the two-dimensional parameter plane of $K$ and $V$, the set of parameters, at which the system Eqs. (2) and (3) exhibits strange nonchaotic attractors, has a Cantor-like structure, and is embedded between two critical curves.\rq\rq} Notice that $(K, V)$ in \cite{DGO} corresponds to $(\tau, \alpha)$ in our paper; and a parameter at which a strange nonchaotic attractor in  \cite{DGO}  appears is in the complement of the mode-locking region. The following theorem gives a rigorous verification of the observation that the set of $(\tau, \alpha)$ where  strange nonchaotic attractors appear contains no interior for a topologically generic forcing function.
\begin{theorem}\label{thm arnold1}
For any $\omega \in (\R \setminus \Q)/\Z$ and any non-zero $\beta \in \R$, there is a residual subset ${\cal U}_{\beta} \subset C^0(\T)$ such that for every $q \in {\cal U}_{\beta}$, the set $\{ ( \tau, \alpha) \mid f_{\alpha, \beta, \tau, q} \in \cal{ML}  \}$
is an open and dense subset of $\R \times (0,1)$.
\end{theorem}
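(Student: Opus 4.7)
The plan is to deduce Theorem~\ref{thm arnold1} from Theorem~\ref{thm:29'} applied fiberwise in $\alpha$, combined with a standard Baire category diagonalization in the forcing $q$. Take $g:\T\to\T$, $g(\theta)=\theta+\omega$; this is strictly ergodic since $\omega$ is irrational, and its Schwartzman group $G(g)$ contains $\omega\Z+\Z$ (using $\psi(\theta)=k\theta$ with $\phi\equiv k\omega$ for $k\in\Z$, together with $\psi\equiv 0$, $\phi\equiv 1$), hence $G(g)$ is dense in $\R$. For each $\alpha\in(0,1)$, put $P_\alpha(w):=\tfrac{\alpha}{2\pi}\sin(2\pi w)$: this is a non-constant real-analytic function on $\T$ with $\|P_\alpha'\|=\alpha<1$ and no proper smaller period, so the triple $(\R,d_\R,\,h\mapsto(w\mapsto w+P_\alpha(w)+h))$ satisfies the hypotheses of Example~\ref{exam II}. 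The associated map $\bPhi_\alpha:C^0(\T,\R)\to\Diff^{0,1}_g(\T\times\T)$ satisfies $\bPhi_\alpha(\tau+\beta q)=f_{\alpha,\beta,\tau,q}$, so Theorem~\ref{thm:29'} yields, for every $\alpha\in(0,1)$, that $\cal{ML}_\alpha:=\bPhi_\alpha^{-1}(\cal{ML})$ is open and dense in $C^0(\T,\R)$.

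Set $A(q):=\{(\tau,\alpha)\in\R\times(0,1)\mid f_{\alpha,\beta,\tau,q}\in\cal{ML}\}$. Since $(\tau,\alpha,q)\mapsto f_{\alpha,\beta,\tau,q}$ is continuous in the $C^0$-topology and $\cal{ML}$ is $C^0$-open, $A(q)$ is an open subset of $\R\times(0,1)$ for every $q$. Fix a countable basis $\{U_n\}_{n\geq 1}$ for the topology of $\R\times(0,1)$, and set $\cU_n:=\{q\in C^0(\T)\mid A(q)\cap U_n\neq\emptyset\}$. If each $\cU_n$ is open and dense in $C^0(\T)$, then $\cU_\beta:=\bigcap_n\cU_n$ is residual, and any $q\in\cU_\beta$ gives an $A(q)$ that meets every basic open set and is therefore open and dense in $\R\times(0,1)$, which is the conclusion of Theorem~\ref{thm arnold1}.

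Openness of $\cU_n$ is immediate: if $(\tau_0,\alpha_0)\in A(q)\cap U_n$, then $f_{\alpha_0,\beta,\tau_0,q'}$ is $C^0$-close to $f_{\alpha_0,\beta,\tau_0,q}$ for $q'$ near $q$ in $C^0$, and hence remains mode-locked, so $q'\in\cU_n$. For density, fix $q_0\in C^0(\T)$ and $\varepsilon>0$ and pick any $(\tau_0,\alpha_0)\in U_n$; by the density of $\cal{ML}_{\alpha_0}$ in $C^0(\T,\R)$ there exists $H\in\cal{ML}_{\alpha_0}$ with $\sup_\theta|H(\theta)-\tau_0-\beta q_0(\theta)|<|\beta|\varepsilon$. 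Setting $q:=(H-\tau_0)/\beta$ (which uses $\beta\neq 0$) we obtain $\|q-q_0\|_{C^0}<\varepsilon$ and $f_{\alpha_0,\beta,\tau_0,q}=\bPhi_{\alpha_0}(H)\in\cal{ML}$, so $(\tau_0,\alpha_0)\in A(q)\cap U_n$ as required.

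Because Theorem~\ref{thm:29'} already supplies the substantive content, there is no real obstacle here; the only items needing attention are the verification that $P_\alpha$ satisfies the hypotheses of Example~\ref{exam II} for every $\alpha\in(0,1)$ (which is exactly why the parameter range is $(0,1)$ rather than $[0,1]$: $\alpha=0$ would make $P_\alpha$ constant, $\alpha=1$ would make $\|P_\alpha'\|=1$), and the observation that the affine reparametrization $H=\tau+\beta q$ identifies the Arnold-tongue density problem (for fixed $\alpha$ and $\tau$) with the density statement provided by Theorem~\ref{thm:29'}.
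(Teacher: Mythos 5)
Your proof is correct and follows essentially the same route as the paper: fix the parameters, apply the main density theorem to the forcing variable, and diagonalize over a countable dense family of $(\tau,\alpha)$ via Baire category. The only (cosmetic) differences are that you invoke Theorem~\ref{thm:29'} with Example~\ref{exam II} directly (legitimate, since $\sin(2\pi w)$ has no smaller period) and absorb $\tau$ into $H$ by an affine reparametrization, whereas the paper builds $\tau$ into $\iota_\cH$ and cites Example~\ref{exam III} with Theorem~\ref{thm:29}.
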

\begin{remark}
We can clearly see from the proof below that a similar result holds when the function $\sin(2\pi x)$ is replaced by {\em any} non-constant real analytic function on $\T$.
\end{remark}
\begin{proof}
Fix some $\omega \in (\R \setminus \Q)/\Z$ and define $g: \T \to \T$ by $g(x) =  x + \omega$.
Fix some $\tau \in \R$, $\alpha \in (0,1)$ and $\beta \in \R \setminus \{0\}$. 
We let ${\cal H}_{\alpha, \beta, \tau} = \R$ and let $d_{\cH_{\alpha, \beta, \tau}}$ be the Euclidean metric. We take
\aryst
\iota_{\cH_{\alpha, \beta, \tau}}( h ) = (x \mapsto x + \tau + \frac{\alpha}{2\pi}\sin(2\pi x) + \beta h \mod \ 1).
\earyst
It is clear that $( \cH_{\alpha, \beta, \tau}, d_{\cH_{\alpha, \beta, \tau}}, \iota_{\cH_{\alpha, \beta, \tau}})$ belongs to Example \ref{exam III}.
Thus we can apply Theorem \ref{thm:29}  to deduce that: for every $\beta \in \R \setminus \{0\}$, for every $\tau \in \R$, $\alpha \in (0,1)$,
the set of $q \in C^0(\T)$ such that $f_{\alpha, \beta, \tau, q}$ is mode-locked is open and dense with respect to the uniform topology on $C^0(\T)$. Then we take a dense subset $\{ (\tau_n, \alpha_n) \}_{n \geq 0}$ in $\R \times (0,1)$, and for each $n \geq 0$ we let $\cB_n$ denote the set of $q \in C^0(\T)$ such that $f_{\alpha_n, \beta, \tau_n, q}$ is mode-locked. Then $\cB_n$ is open and dense for each $n \geq 0$. Consequently, the set ${\cal U}_\beta = \cap_{n \geq 0} \cB_n$ is a residual subset of $C^0(\T)$, and for every $q \in {\cal U}_\beta$, the open set $\{ ( \tau, \alpha) \mid f_{\alpha, \beta, \tau,  q} \in \cal{ML}  \}$ is dense since it contains $(\tau_n, \alpha_n)$ for every $n \geq 0$.  This completes the proof.
\end{proof}

\subsection{Idea of the proof} 
As we have explained, the key step is Theorem \ref{Theorem 30}. Although the general idea for proving Theorem \ref{Theorem 30} is originated from \cite{AB, B}, relying on the fact that we can, by arbitrarily small perturbation, promote linear displacement for the orbit of a given point, the execution in our case is more complicated. Compared to the case of $SL(2,\mathbb{R})$-cocycles, where the matching of (temporary) stable and unstable directions automatically reduces the growth of the derivative for every other point on the circle, our general $g$-forced circle diffeomorphisms have no such convenient feature. We overcame this problem by carefully dividing a circle fiber into two parts so that for one part we prove certain cancellation in the future, and for the other in the past. For each part, the desired cancellation is also obtained for different reasons, under two distinct possibilities. After proving Theorem \ref{Theorem 30}, we will use it as a starting point, and perform a further perturbation to obtain the genericity of mode-locking. For this part, we need to construct a global perturbation by combining several local perturbations at different scales.  
This is organized through the stratification introduced in \cite{ABD2}.
For the above strategy to work, we need to use some features of maps in Example \ref{exam I} and \ref{exam II} to ensure that ${\cal H}$ is sufficiently rich so that we can produce certain parabolic-like and hyperbolic-like circle diffeomorphisms by making perturbations in ${\cal H}$ (see Lemma \ref{lem arnoldfamilyismovable} and Lemma \ref{lem getparabolicelement}).

\subsection{Notations}

Given an integer $k \geq 1$ and $g_1,\cdots,g_k \in \diff^{1}(\T)$ (or $\diff^1(\R)$), we denote by 
$\Pi_{i=1}^k g_i$ the map $g_k \circ \cdots \circ g_1$.

Throughout this paper, in all the lemmas and propositions, we will assume that all the constants depend on some $({\cal H}, d_{\cal H}, \iota_{\cH})$ fixed throughout this paper. For the sake of simplicity, we will not explicitly present such dependence.

\section{Preliminary}

We fix some $({\cal H}, d_{\cal H}, \iota_{\cH})$  in Example \ref{exam I}  or \ref{exam II} throughout this section.  We let $g: X \to X$ be a map given at the beginning of Section \ref{sec Mainresults}.

\subsection{Basic properties of dynamically forced maps}

Let us recall some basic properties of $g$-forced circle homeomorphisms with a uniquely ergodic $g$, proved in \cite{ABD2, Zha}. In particular, all the results in this subsection apply to maps in $ \Diff^{0,1}_g(X\times\mathbb{T})$.

Let $f$ be a $g$-forced circle homeomorphism with a lift $F$. Then for any $g$-forced circle homeomorphism $f'$ such that
$D_{C^{0,1}}(f',f)<\frac{1}{2}$, there exists a unique lift of $f'$, denoted by $F'$, such that $d_{\Diff^1(\R)}(F',F) <  \frac{1}{2}$. In this rest of this paper, we will say such $F'$ is the lift of $f$ that is {\it close} to $F$.

We first notice the following alternative characterization of the mode-locking property (see \cite[Definition 3 and Lemma 2]{Zha}).
\begin{lemma} \label{claim equivdefmodelock}
 A $g$-forced circle homeomorphism $f$ is mode-locked if and only if there exists $\epsilon > 0$ such that 
 we have 
 \aryst
 \rho(F_{-\epsilon}) = \rho(F) = \rho(F_{\epsilon}),
 \earyst
 where $F$ is an arbitrary lift of $f$, and $F_t(x, y) = F(x, y) + t$ for all $(x, y) \in X \times \R$ and $t \in \R$.
 \end{lemma}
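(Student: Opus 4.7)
The plan is to combine two standard facts: \textbf{(i)} monotonicity of the rotation number in the lift, i.e., if two lifts $F, F'$ of $g$-forced circle homeomorphisms satisfy $\pi_2 F'(x,y) \leq \pi_2 F(x,y)$ for every $(x,y) \in X \times \R$, then $\rho(F') \leq \rho(F)$; and \textbf{(ii)} the existence and uniqueness of a close lift $F'$ of $f'$ within $C^0$-distance $1/2$ of $F$, as recalled just before the statement. Fact (i) follows by a routine induction: from $\pi_2 F'_x \leq \pi_2 F_x$ and monotonicity of the orientation-preserving fiber homeomorphisms of $\R$, one obtains $\pi_2 (F')^n_x(w) \leq \pi_2 F^n_x(w)$ for every $n \geq 1$ and every $(x,w)$; dividing by $n$ and letting $n \to \infty$ yields the inequality of rotation numbers.

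For the forward implication, I would argue as follows. If $f$ is mode-locked, then for every sufficiently small $\epsilon > 0$ the $g$-forced circle homeomorphisms obtained by projecting $F_\epsilon$ and $F_{-\epsilon}$ to $X \times \T$ lie within $C^0$-distance $\epsilon$ of $f$, so by the definition of mode-locking they share the rotation number of $f$. Since $F_{\pm\epsilon}$ are themselves lifts of those projected maps and differ from $F$ by a uniformly small translation, the rotation numbers agree not just modulo $1$ but as real numbers, giving $\rho(F_{-\epsilon}) = \rho(F) = \rho(F_\epsilon)$ for all sufficiently small $\epsilon > 0$.

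For the converse, fix $\epsilon > 0$ with $\rho(F_{-\epsilon}) = \rho(F) = \rho(F_\epsilon)$. Given any $g$-forced circle homeomorphism $f'$ with $D_{C^0}(f', f) < \min(\epsilon, 1/2)$, let $F'$ be its close lift. Then $|\pi_2 F'(x,y) - \pi_2 F(x,y)| < \epsilon$ uniformly, whence
$$\pi_2 F_{-\epsilon}(x,y) \leq \pi_2 F'(x,y) \leq \pi_2 F_\epsilon(x,y), \qquad \forall (x,y) \in X \times \R.$$
Applying fact (i) twice sandwiches $\rho(F')$ between $\rho(F_{-\epsilon})$ and $\rho(F_\epsilon)$, forcing $\rho(F') = \rho(F)$ and hence $\rho(f') = \rho(f)$. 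Therefore $f$ is mode-locked.

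The main ingredient is the monotonicity of the rotation number, which is classical for circle homeomorphisms and extends immediately to the $g$-forced setting by applying the same argument fiberwise; I do not anticipate a substantive obstacle beyond bookkeeping with the close-lift convention.
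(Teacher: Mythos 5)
Your converse direction is correct and is the standard route: for any $f'$ within $C^0$-distance $\min(\epsilon,1/2)$ of $f$, the close lift $F'$ satisfies $\pi_2 F_{-\epsilon}\leq \pi_2 F'\leq \pi_2 F_{\epsilon}$ pointwise, and fiberwise monotonicity of the lifted iterates sandwiches $\rho(F')$ between $\rho(F_{-\epsilon})$ and $\rho(F_{\epsilon})$. (The paper does not reprove this lemma but imports it from \cite{Zha}; your argument for this half is the expected one.)

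The forward direction, however, has a genuine gap at the step ``the rotation numbers agree not just modulo $1$ but as real numbers.'' Mode-locking gives $\rho(F_{\epsilon})\equiv\rho(F)\pmod 1$, and monotonicity gives $\rho(F)\leq\rho(F_{\epsilon})\leq\rho(F_{1})=\rho(F)+1$; combining these only yields $\rho(F_{\epsilon})\in\{\rho(F),\rho(F)+1\}$ (and symmetrically $\rho(F_{-\epsilon})\in\{\rho(F)-1,\rho(F)\}$). Nothing in ``$F_{\epsilon}$ differs from $F$ by a uniformly small translation'' excludes the value $\rho(F)+1$: a monotone function of $t$ can a priori jump by a full unit at $t=0^{+}$, and crude iterate estimates permit $(F_{\epsilon}^{\,n})_x(y)-(F^{n})_x(y)$ to grow linearly in $n$, so no purely order-theoretic argument closes this. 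What actually rules out the integer jump is continuity of $t\mapsto\rho(F_t)$ (continuity of the lifted fibered rotation number in the uniform topology, valid because the base $g$ is uniquely ergodic). This ingredient is available in the paper: Lemma \ref{LEMMA 55} gives $\sup_{x,y}|(F^{n})_x(y)-y-n\rho(F)|<n\kappa_0$ for all large $n$; since for fixed $n$ the quantities $\overline{M}(\cdot,n)/n$ and $\underline{M}(\cdot,n)/n$ depend continuously on the lift in the uniform topology and bound $\rho$ from above and below by subadditivity, one obtains $|\rho(F_t)-\rho(F)|\leq 2\kappa_0$ for $|t|$ small, which forces $\rho(F_{\pm\epsilon})=\rho(F)$. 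You should invoke this continuity explicitly (or supply the subadditivity argument); as written, the forward implication does not close.
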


\begin{definition}
For any $f \in  \Diff^{0,1}_g(X \times \mathbb{T})$, for any lift of $f$ denoted by $F$, for any integer $n > 0$ we set
\aryst
\underline{M}(F, n) = \inf_{(x, y) \in X \times \R} ((F^N)_x(y) - y), \quad \overline{M}(F, n) = \sup_{(x, y) \in X \times \R}((F^N)_x(y) - y).
\earyst
\end{definition}

The following is \cite[Lemma 3]{Zha}.
\begin{lemma}\label{LEMMA 55}
Given some $f \in \Diff^{0,1}_g(X \times \mathbb{T})$ and a lift of $f$ denoted by $F$. For any $\kappa_0 > 0$, there exists $N_0 = N_0(f, \kappa_0) > 0$ such that for any $n > N_0$, we have
$[\underline{M}(F, n), \overline{M}(F, n)] \subset n \rho(F) + (-n\kappa_0, n\kappa_0)$.
\end{lemma}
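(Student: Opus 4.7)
The plan is to reduce the two-sided estimate on $(F^n)_x(y) - y$ to uniform convergence of Birkhoff averages of a single continuous displacement function, and then to close the argument by a standard weak-$\ast$ compactness trick applied to the empirical measures, exploiting the unique ergodicity of the base map $g$.

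\textbf{Step 1: Reformulation.} Since the lift $F$ satisfies $F_x(y+1) = F_x(y)+1$, the displacement $\phi(x,y) := F_x(y) - y$ is $\Z$-periodic in $y$ and descends to a continuous function $\phi : X\times\T\to\R$. Telescoping yields
\begin{equation*}
(F^n)_x(y) - y \;=\; \sum_{k=0}^{n-1} \phi\bigl(f^k(x,\, y\bmod 1)\bigr),
\end{equation*}
so the claim is equivalent to
\begin{equation*}
\sup_{(x,\bar y)\in X\times\T}\Bigl|\tfrac{1}{n}\sum_{k=0}^{n-1}\phi(f^k(x,\bar y)) - \rho(F)\Bigr| < \kappa_0 \quad \text{for all } n > N_0.
\end{equation*}

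\textbf{Step 2: Every $f$-invariant measure integrates $\phi$ to $\rho(F)$.} I would next show that $\int \phi\, d\mu = \rho(F)$ for every $f$-invariant Borel probability measure $\mu$ on $X\times\T$. By unique ergodicity of $g$, any such $\mu$ projects to the unique $g$-invariant measure $\nu$. Applying Birkhoff's theorem to $(f,\mu,\phi)$, the Birkhoff averages $\tfrac{1}{n}\sum_{k=0}^{n-1}\phi\circ f^k$ converge $\mu$-almost everywhere to an $f$-invariant function whose integral is $\int\phi\, d\mu$. But from Definition \ref{def dynamicallyforcedmap} these pointwise limits equal $\rho(F)$ at \emph{every} point, so $\int \phi\, d\mu = \rho(F)$.

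\textbf{Step 3: Uniform convergence by compactness.} Suppose the conclusion fails, so there exist $\kappa_0 > 0$, $n_j \to \infty$ and points $(x_j,\bar y_j)\in X\times\T$ with
\begin{equation*}
\Bigl|\tfrac{1}{n_j}\sum_{k=0}^{n_j-1}\phi(f^k(x_j,\bar y_j)) - \rho(F)\Bigr| \;\geq\; \kappa_0 .
\end{equation*}
Form the empirical measures $\mu_j := \tfrac{1}{n_j}\sum_{k=0}^{n_j-1}\delta_{f^k(x_j,\bar y_j)}$ on the compact space $X\times\T$. Since $\mu_j - f_\ast\mu_j$ has total variation at most $2/n_j$, any weak-$\ast$ accumulation point $\mu^\ast$ is $f$-invariant. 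Testing against $\phi\in C(X\times\T)$ and passing to the limit gives $|\int\phi\, d\mu^\ast - \rho(F)|\geq\kappa_0$, contradicting Step 2. Hence the uniform convergence holds, which is equivalent to the claim.

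\textbf{Main obstacle.} The only delicate point is Step 2, namely identifying $\int\phi\,d\mu = \rho(F)$ for \emph{every} (not merely ergodic) $f$-invariant measure; once this is established the rest is a textbook weak-$\ast$ compactness argument, and Step 1 is bookkeeping. One can also avoid Birkhoff's theorem and argue directly: since $(F^n)_x(y)/n \to \rho(F)$ pointwise from the statement of Definition \ref{def dynamicallyforcedmap}, and the ergodic decomposition of $\mu$ consists of measures for which a.e.\ Birkhoff average of $\phi$ exists and equals $\rho(F)$ by the pointwise statement, the claim follows.
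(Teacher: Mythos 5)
Your proof is correct. The paper does not actually prove this statement — it is quoted verbatim as \cite[Lemma 3]{Zha} — and your argument (rewrite $(F^n)_x(y)-y$ as a Birkhoff sum of the continuous displacement $\phi$ on $X\times\T$, show $\int\phi\,d\mu=\rho(F)$ for every $f$-invariant $\mu$ using the everywhere-pointwise convergence granted by Definition \ref{def dynamicallyforcedmap}, then upgrade to uniform convergence via weak-$\ast$ limits of empirical measures) is precisely the standard proof of uniform convergence of the fibered rotation number over a uniquely ergodic base, so it matches the intended source argument rather than diverging from it. The only point worth flagging is that your Step 2 leans on the assertion in Definition \ref{def dynamicallyforcedmap} that the limit exists and is independent of $(x,w)$; since the paper states this as known, that reliance is legitimate, and with it your dominated-convergence/Birkhoff identification of $\int\phi\,d\mu$ closes the argument.
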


The following is an immediate consequence of Lemma \ref{LEMMA 55}.
\begin{cor}\label{LEMMA 56}
Given some $f \in \Diff^{0,1}_g(X \times \mathbb{T})$ and a lift of $f$ denoted by $F$. If for some $\epsilon > 0$ we have $\rho(F_{-\epsilon}) < \rho(F)$, resp. $\rho(F) < \rho(F_{\epsilon})$, then there exist $\kappa_1 = \kappa_1(f, \epsilon) > 0$ and  an integer $N_1 = N_1(f, \epsilon) > 0$ such that for any $n > N_1$, we have
\aryst
\overline{M}(F_{-\epsilon}, n) < \underline{M}(F, n) - n \kappa_1, \mbox{ resp. } \overline{M}(F, n) + n \kappa_1  < \underline{M}(F_\epsilon, n).
\earyst
\end{cor}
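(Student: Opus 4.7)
The plan is to deduce Corollary \ref{LEMMA 56} directly from Lemma \ref{LEMMA 55} by choosing $\kappa_0$ small enough relative to the rotation number gap. Consider the first case where $\rho(F_{-\epsilon}) < \rho(F)$; set $\Delta := \rho(F) - \rho(F_{-\epsilon}) > 0$. I would apply Lemma \ref{LEMMA 55} simultaneously to the two $g$-forced homeomorphisms $f$ (with lift $F$) and $f_{-\epsilon} := ({\rm Id} \times \pi_{\R \to \T}) \circ F_{-\epsilon} \circ ({\rm Id} \times \pi_{\R \to \T})^{-1}$ (with lift $F_{-\epsilon}$), taking as the tolerance $\kappa_0 := \Delta/4$. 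This produces an integer $N_1$ so large that for every $n > N_1$ one simultaneously has
\begin{equation*}
\underline{M}(F, n) > n\rho(F) - n\kappa_0, \qquad \overline{M}(F_{-\epsilon}, n) < n \rho(F_{-\epsilon}) + n \kappa_0.
\end{equation*}

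Subtracting, for any such $n$,
\begin{equation*}
\underline{M}(F, n) - \overline{M}(F_{-\epsilon}, n) > n (\rho(F) - \rho(F_{-\epsilon})) - 2 n \kappa_0 = n\Delta - n\Delta/2 = n\Delta/2.
\end{equation*}
Hence setting $\kappa_1 := \Delta/2 = (\rho(F) - \rho(F_{-\epsilon}))/2$ yields exactly the desired inequality $\overline{M}(F_{-\epsilon}, n) < \underline{M}(F, n) - n \kappa_1$ for all $n > N_1$. The second case, where $\rho(F) < \rho(F_\epsilon)$, is handled symmetrically by applying Lemma \ref{LEMMA 55} to $F$ and $F_\epsilon$ with $\kappa_0 = (\rho(F_\epsilon) - \rho(F))/4$ and then subtracting the resulting inequalities in the opposite order.

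There is essentially no obstacle here: this is a one-line consequence of Lemma \ref{LEMMA 55}, as the latter already says that $\underline{M}(F,n)$ and $\overline{M}(F,n)$ track $n\rho(F)$ with sublinear error. The only care needed is to choose the common $\kappa_0$ smaller than half of the rotation number gap so that the error terms do not swallow the linear separation produced by the gap, and to invoke Lemma \ref{LEMMA 55} once for each of the two forced homeomorphisms and then take the maximum of the two resulting $N_0$'s as $N_1$.
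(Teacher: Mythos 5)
Your proof is correct and is exactly the intended argument: the paper states Corollary \ref{LEMMA 56} as an ``immediate consequence'' of Lemma \ref{LEMMA 55} without spelling out details, and applying that lemma to both $F$ and $F_{\mp\epsilon}$ with $\kappa_0$ a quarter of the rotation-number gap, then subtracting, is precisely the one-line deduction meant. The choices $\kappa_1=(\rho(F)-\rho(F_{-\epsilon}))/2$ and $N_1$ the maximum of the two thresholds are fine, and $F_{-\epsilon}$ is indeed a lift of a $g$-forced circle diffeomorphism so Lemma \ref{LEMMA 55} applies to it.
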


Given a $g$-forced circle homeomorphism $f$, and a lift of $f$ denoted by $F$. For any integer $N > 0$, any $\kappa > 0$, we define 
\ary \label{eq omegaNFkappa}
 \Omega_N(F, \kappa)=\{(x,y,z)\in X\times\mathbb{R}^2\mid  |z- (F^N_x)(y)|< N \kappa \}.
 \eary

We recall the following result, which is analogous to \cite[Lemma 8]{Zha}.  
\begin{lemma} \label{lem cancellation} 
Given $H \in C^0(X, \cal{H})$. We let $f = \Phi(H)$ and let $F$ be a lift of $f$.  For any $\epsilon \in (0, 1/4)$ there exists $\epsilon_0 = \epsilon_0(H, \epsilon) \in (0, \epsilon)$ such that if we have $\rho(F_{\epsilon_0}) > \rho(F) > \rho(F_{- \epsilon_0})$, then by letting $\kappa_1 = \kappa_1(f, \epsilon_0)> 0$ and $N_1 = N_1(f, \epsilon_0) > 0$ be as in Corollary \ref{LEMMA 56}, the following is true. For any $\check{H} \in \cB_{\cal H}(H, \epsilon_0)$, for any integer $N \geq N_1$, there exists a continuous map $\Phi^{\check{H}}_N : \Omega_N(F, \kappa_1) \to \cal{H}^{N}$ such that for any $(x, y, z) \in \Omega_N(F, \kappa_1)$,  let $\Phi^{\check{H}}_N(x, y, z) = (p_0,\cdots, p_{N-1})$ and let $\check{F}$ be the unique lift of $\Phi(\check{H})$ close to $F$\footnote{The map $\check{F}$ is well-defined since $\epsilon_0 < \epsilon < 1/2$.}, then
\enmt
\item $d_{\cal{H}}(p_i, \check{H}(g^{i}(x))) < 2\epsilon$ for every $0 \leq i \leq N-1$, 
\item let $P_i$ be the unique lift of $\iota_{\cal H}(p_i)$ close to $\check{F}_{g^{i}(x)}$, then $P_{N-1} \cdots P_0(y) = z$,
\item if $z = (\check{F}^{N})_{x}(y)$, then $p_i = \check{H}(g^{i}(x))$ for every $0 \leq i \leq N-1$.
\eenmt
\end{lemma}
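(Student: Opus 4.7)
The plan is to reduce the construction of $(p_0, \ldots, p_{N-1})$ to the solution of a single scalar equation. In both Example \ref{exam I} and Example \ref{exam II}, setting $p_i := \check H(g^i(x)) + t$ (scalar addition in $\cH$) yields an element whose unique lift $P_i$ close to $\check F_{g^i(x)}$ is exactly $\check F_{g^i(x)} + t$. Consequently, the task reduces to inverting the map $\Psi(t) := ((\check F + t)^N)_x(y)$, where $\check F + t$ denotes the vertically shifted lift. Writing $y_k = ((\check F + t)^k)_x(y)$, the chain rule gives
\aryst
\Psi'(t) = \sum_{k=0}^{N-1} \prod_{j=k+1}^{N-1} (\check F_{g^j(x)})'(y_j) \geq 1,
\earyst
so $\Psi$ is continuous and strictly increasing on $\R$.

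The crux is to pick $\epsilon_0 \in (0, \epsilon)$ and to exhibit constants $\kappa_1, N_1 > 0$ such that for every $N \geq N_1$, every $\check H \in \cB_\cH(H, \epsilon_0)$, and every $(x, y, z) \in \Omega_N(F, \kappa_1)$, one has the endpoint estimates $\Psi(-2\epsilon_0) < z < \Psi(2\epsilon_0)$. By construction of $d_\cH$ in both examples, $D_\cH(\check H, H) < \epsilon_0$ implies $\|\check F_x - F_x\|_{C^0} \leq \epsilon_0$ uniformly in $x$, so $\check F + 2\epsilon_0 \geq F + \epsilon_0$ pointwise as lifts. Corollary \ref{LEMMA 56} applied to $(f, \epsilon_0)$ produces $\kappa_1, N_1$ such that for $N > N_1$,
\aryst
\Psi(2\epsilon_0) \geq (F_{\epsilon_0}^N)_x(y) \geq y + \underline{M}(F_{\epsilon_0}, N) > y + \overline{M}(F, N) + N\kappa_1 \geq F^N_x(y) + N\kappa_1.
\earyst
A symmetric argument gives $\Psi(-2\epsilon_0) < F^N_x(y) - N\kappa_1$, which combined with $|z - F^N_x(y)| < N\kappa_1$ yields the desired endpoint separation. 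Additionally, $\epsilon_0$ is taken small enough that $|t| \leq 2\epsilon_0$ forces $d_\cH(\check H(g^i(x)) + t, \check H(g^i(x))) < 2\epsilon$ for every $x \in X$; this is trivial in Example \ref{exam II} and follows in Example \ref{exam I} from the form of $d_{\widetilde{\Diff^r(\T)}}$ together with uniform continuity of $\check H$ on the compact set $X$.

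With these estimates in hand, the intermediate value theorem and strict monotonicity of $\Psi$ yield a unique $t = t(x, y, z) \in (-2\epsilon_0, 2\epsilon_0)$ with $\Psi(t) = z$; set $\Phi_N^{\check H}(x, y, z) := (\check H(g^i(x)) + t(x, y, z))_{i=0}^{N-1}$. Item (1) then follows from the choice of $\epsilon_0$, item (2) from the defining property of $t$, and item (3) from $\Psi(0) = (\check F^N)_x(y)$ together with uniqueness. Continuity of $\Phi_N^{\check H}$ in $(x, y, z)$ is inherited from continuity and strict monotonicity of $\Psi$ in its arguments. The main technical obstacle lies in the previous paragraph: transferring the rotation-number gap from $F_{\pm\epsilon_0}$ to $\check F \pm 2\epsilon_0$ uniformly over $\check H \in \cB_\cH(H, \epsilon_0)$. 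The device of shifting by twice the perturbation radius is precisely what absorbs the $\epsilon_0$-sized discrepancy between $\check F$ and $F$.
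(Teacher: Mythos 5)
Your proposal is correct and is essentially the paper's argument: the paper reduces to \cite[Lemma 8]{Zha} after observing exactly your key point, namely that the vertical-shift family $P\mapsto P+t$ can be realized inside $\cH$ continuously in both Examples \ref{exam I} and \ref{exam II}, and the endpoint estimates via Corollary \ref{LEMMA 56} plus monotonicity/IVT in $t$ are the mechanism of that lemma. One small imprecision: the bound $d_{\cH}(\check H(g^i(x))+t,\check H(g^i(x)))<2\epsilon$ must hold uniformly over the non-compact ball $\cB_{\cH}(H,\epsilon_0)$, so rather than "uniform continuity of $\check H$" one should use that $d_{\Diff^k}$ is invariant under the simultaneous shift $h\mapsto h+t$ and pass through $H$ by the triangle inequality, reducing to compactness of $H(X)$ only (and for $r=\infty$ the inequality $\|\check F_x-F_x\|_{C^0}\leq\epsilon_0$ only holds up to a harmless constant).
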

\begin{proof} 
Firstly, we see that $\Omega_N(F, \kappa_1) $ is the same as $\Gamma_N(F, \kappa_1)$ defined in \cite[Section 4.1]{Zha}.
Secondly, we notice that the following is true for any $(\cH, d_\cH, \iota_\cH)$ in Example \ref{exam I} or \ref{exam II}: 
for any $p \in {\cal H}$, for any lift $P$ of $\iota_{\cal H}(p)$, we have a map $\varphi : (-1,1) \to {\cal H}$ such that $\varphi(0) = p$ and for every $t \in (-1,1)$, $P_t$ a lift of $\iota_{\cal H}(\varphi(t))$. Moreover, we may let $\varphi$ depend continuously on $p$.
Then we can follow the same argument in the proof of \cite[Lemma 8]{Zha} to deduce the lemma. 
\end{proof}
We have the following.
\begin{prop}[Tietze's Extension Theorem for $\cH$]\label{prop-tietze}
Let $H \in C^0(X, {\cal H})$ and let $Y$ be a compact subset of $X$.
Given a constant $\epsilon > 0$ and a continuous map $H_0: Y \to {\cal H}$ such that for every $x \in Y$ we have $d_{\cal H}(H(x), H_0(x)) < \epsilon$. Then there exists $H_1 \in  \cB_{\cal H}(H, \epsilon)$ such that $H_1(x) = H_0(x)$ for every $x \in Y$.
\end{prop}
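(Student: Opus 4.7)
I would handle the two cases of $({\cal H}, d_{\cal H}, \iota_{\cal H})$ in Example~\ref{exam I} and Example~\ref{exam II} separately, reducing each to a Tietze-type extension problem in a linear space.

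For Example~\ref{exam II}, where $\cH = \R$ with the Euclidean metric, the proposition is the classical Tietze extension theorem applied to the continuous function $\psi := H_0 - H|_Y$. Compactness of $Y$ upgrades the pointwise inequality $|\psi| < \epsilon$ to the uniform gap $\alpha := \max_{y \in Y} |\psi(y)| < \epsilon$; Tietze (on the compact metric, hence normal, space $X$) then extends $\psi$ to a continuous $\tilde\psi : X \to [-\alpha, \alpha]$, and $H_1 := H + \tilde\psi$ meets all the requirements.

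For Example~\ref{exam I}, where $\cH = \widetilde{\Diff^{r}(\T)}$, the structural observation I will use is that $\cH$ is a convex open subset of the affine Banach space $A := \Id_\R + C^{r}_{\mathrm{per}}(\R, \R)$ of $C^{r}$ lifts; convex combinations in $A$ of elements of $\cH$ remain in $\cH$ because the derivative of the combination is a convex combination of everywhere-positive functions. The plan is then: (i) pass to the difference $\psi(y) := H_0(y) - H(y) \in C^{r}_{\mathrm{per}}(\R, \R)$, continuous on $Y$; (ii) apply Dugundji's Banach-valued Tietze extension theorem to extend $\psi$ to a continuous $\tilde\psi : X \to C^{r}_{\mathrm{per}}(\R, \R)$ with $\tilde\psi|_Y = \psi$ and image in the closed convex hull of $\psi(Y)$; (iii) choose a continuous cutoff $\phi : X \to [0,1]$ with $\phi|_Y \equiv 1$ and support in a sufficiently small neighborhood $U$ of $Y$; and (iv) define $H_1(x) := H(x) + \phi(x)\tilde\psi(x)$. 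Then $H_1|_Y = H_0$ and $H_1 = H$ off $U$ by construction, and $H_1(x) \in \cH$ on $U$ because it is a convex combination in $A$ of $H(x) \in \cH$ and $H(x) + \tilde\psi(x)$, the latter lying in $\cH$ for $x$ near $Y$ since $\cH$ is open in $A$ and $H(y) + \psi(y) = H_0(y) \in \cH$ for $y \in Y$.

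The main obstacle is to verify the bound $d_\cH(H_1(x), H(x)) < \epsilon$ for every $x \in X$, because $d_\cH$ is not simply the norm of $A$: it also incorporates the $C^{r}$-distance between the inverses. I would handle this by invoking the local Lipschitz continuity of the inversion map $h \mapsto h^{-1}$ on any $C^{r}$-bounded subset of $\cH$ with a uniform positive lower bound on $h'$, which follows from the $C^{r}$ inverse function theorem. This ensures that $x \mapsto d_\cH(H_1(x), H(x))$ is continuous on $X$, and since it equals $d_\cH(H_0(x), H(x)) < \epsilon$ on the compact set $Y$, there is a neighborhood $V \supset Y$ on which it stays strictly below $\epsilon$. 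Shrinking the cutoff's support $U$ to lie inside $V$ at the outset yields the desired global bound and completes the construction.
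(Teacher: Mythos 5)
Your treatment of Example \ref{exam II} coincides with the paper's (classical Tietze plus compactness), and for Example \ref{exam I} your core reduction is also the paper's: identify $\widetilde{\Diff^{r}(\T)}$ with a convex open subset of the locally convex space $C^{r}(\T)$ and invoke Dugundji's extension theorem. You go further than the paper's two-line proof in explicitly flagging the genuine difficulty, namely that $d_{\cH}$ also measures the $C^{r}$-distance between the \emph{inverses}. Unfortunately, your proposed resolution of that difficulty does not close it.

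The gap is in the last step. There is first a circularity: the neighborhood $V$ on which $d_{\cH}(H_1(\cdot),H(\cdot))<\epsilon$ depends on $H_1$, hence on the cutoff $\phi$, hence on $U$, so you cannot ``shrink $U$ into $V$ at the outset.'' The de-circularized version of your argument (via the tube lemma applied to the open set $\{(x,t): d_{\cH}(H(x)+t\tilde\psi(x),H(x))<\epsilon\}$) would require that for every $y\in Y$ and every $t\in[0,1]$ the convex combination $(1-t)H(y)+tH_0(y)$ lie in the ball $\{h: d_{\cH}(h,H(y))<\epsilon\}$. This fails, because $d_{\cH}$-balls are not convex — exactly on account of the inverse part of the metric. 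Concretely, for $r=1$ let $h\in\widetilde{\Diff^{1}(\T)}$ be determined by $Dh^{-1}(w)=1+a\sin(2\pi n w)$ with $a\in(0,1)$, and let $h'=h+\tfrac1n$. Then $(h')^{-1}=h^{-1}-\tfrac1n$ and $D(h')^{-1}=Dh^{-1}$, so $d_{\widetilde{\Diff^{1}(\T)}}(h',h)=\tfrac2n<\epsilon$ for $n$ large; but the midpoint $h+\tfrac1{2n}$ satisfies $\sup_w|D(h+\tfrac1{2n})^{-1}(w)-Dh^{-1}(w)|=2a$, which can be made $>\epsilon$. Taking $H\equiv h$, $Y=\{y_0\}$ and $H_0(y_0)=h+\tfrac1n$, any continuous cutoff equal to $1$ at $y_0$ and $0$ off $U$ takes the value $\tfrac12$ at points $x_1$ arbitrarily close to $y_0$, and there $d_{\cH}(H_1(x_1),H(x_1))\geq 2a>\epsilon$, no matter how small $U$ is. Note also that your local Lipschitz constant for inversion is of order $an$ in this example, so it yields only $d_{\cH}\lesssim(1+an)\,d_{C^1}$ along the segment, which is useless here. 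So the linear interpolation $H+\phi\tilde\psi$ does not by itself give $H_1\in\cB_{\cal H}(H,\epsilon)$; controlling the inverse part of $d_{\cH}$ along the interpolation requires an additional idea (this point is, to be fair, also glossed over by the paper's own citation of \cite{Dug}).
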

\begin{proof}
 If $(\cH, d_\cH, \iota_\cH)$ is in Example \ref{exam II}, then this is the classical Tietze's Extension Theorem.
Now we assume that $(\cH, d_\cH, \iota_\cH)$ is in Example \ref{exam I}.  Notice that we can naturally identify $\widetilde{\Diff^r(\T)}$ with
$\cV_{r} = \{ \varphi \in C^r(\T) \mid  \varphi'(w) > - 1, \forall w \in \T \}$ since we can associate to each $\varphi \in \cV_r$ a mapping $y \mapsto y + \varphi(y \mod 1)$ in $\widetilde{\Diff^r(\T)}$, and vice versa. Since $\cV_r$ is a convex open subset of $C^r(\T)$ which is a locally convex topological vector space, we can apply the generalization of Tietze's Extension Theorem for such spaces in \cite{Dug} to conclude the proof.
 \end{proof}

\subsection{Lyapunov exponents}
 
 The following is an elementary yet useful observation.

\begin{lemma}\label{LEMMA 65}
For any $f \in\Diff^{0,1}_g(X\times\mathbb{T})$, we have 
$L_{+}(f)\geq 0\geq L_-(f)$ and $L_+(f),-L_-(f)\leq \log   \Vert f \Vert_{C^{0,1}}$.
\end{lemma}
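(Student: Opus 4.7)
The statement is a combination of two elementary facts about the fiberwise derivative cocycle $(x,w) \mapsto D(f^n)_x(w)$. My plan is to first argue for the sign of $L_{\pm}(f)$ using the fact that each fiber map is a circle diffeomorphism, and then to handle the bound by $\log \|f\|_{C^{0,1}}$ by the chain rule. I will also briefly note why the limits in Definition \ref{DEFINITION 8.2.3.} actually exist, so that the statement of the lemma is well-posed.

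First, for the existence of the limits and the signs of $L_\pm(f)$, set
\[
a_n := \sup_{(x,w) \in X \times \T} \log D(f^n)_x(w), \qquad b_n := \inf_{(x,w) \in X \times \T} \log D(f^n)_x(w).
\]
The chain rule $D(f^{m+n})_x(w) = D(f^n)_{g^m(x)}\bigl((f^m)_x(w)\bigr) \cdot D(f^m)_x(w)$ immediately gives subadditivity $a_{m+n} \leq a_m + a_n$ and superadditivity $b_{m+n} \geq b_m + b_n$. Fekete's lemma then produces $L_+(f) = \lim_{n \to \infty} a_n/n = \inf_n a_n/n$ and $L_-(f) = \lim_{n \to \infty} b_n/n = \sup_n b_n/n$. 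Next, for every fixed $x \in X$, the fiber map $(f^n)_x$ is an orientation preserving diffeomorphism of $\T$, so $\int_{\T} D(f^n)_x(w)\,dw = 1$. This forces $\sup_w D(f^n)_x(w) \geq 1$ and $\inf_w D(f^n)_x(w) \leq 1$, hence $a_n \geq 0 \geq b_n$ for every $n$. Dividing by $n$ and passing to the limit yields $L_+(f) \geq 0 \geq L_-(f)$.

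Second, for the upper bounds, observe that by the very definition of $\|f\|_{C^{0,1}}$ we have $Df_x(w) \leq \|f\|_{C^{0,1}}$ and $D(f_x^{-1})(w) \leq \|f\|_{C^{0,1}}$ for every $(x,w)$. Since $D(f_x^{-1})(f_x(w)) = Df_x(w)^{-1}$, the second inequality is equivalent to $Df_x(w) \geq \|f\|_{C^{0,1}}^{-1}$. Applying the chain rule $n$ times gives
\[
\|f\|_{C^{0,1}}^{-n} \leq D(f^n)_x(w) \leq \|f\|_{C^{0,1}}^{n} \qquad \forall (x,w) \in X \times \T,
\]
so $-n \log \|f\|_{C^{0,1}} \leq b_n \leq a_n \leq n \log \|f\|_{C^{0,1}}$. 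Dividing by $n$ and letting $n \to \infty$ yields $L_+(f) \leq \log \|f\|_{C^{0,1}}$ and $-L_-(f) \leq \log \|f\|_{C^{0,1}}$, completing the lemma.

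\textbf{Main obstacle.} There isn't really one; the only point requiring a moment of thought is the lower bound $L_+(f) \geq 0$ (and symmetrically $L_-(f) \leq 0$), which relies crucially on the topological fact that each $(f^n)_x$ is a circle diffeomorphism of degree one, so that its derivative integrates to $1$ on $\T$. Without this, a purely cocycle-theoretic argument would only give bounds in terms of $\log \|f\|_{C^{0,1}}$ and not the sharp sign information.
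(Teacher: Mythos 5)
Your proof is correct and follows essentially the same route as the paper: the identity $\int_{\T} D(f^n)_x(w)\,dw = 1$ forces the signs of $L_{\pm}(f)$, and the chain rule together with the definition of $\|f\|_{C^{0,1}}$ gives the bound $\max(L_+(f), -L_-(f)) \leq \log\|f\|_{C^{0,1}}$. You simply spell out the subadditivity/Fekete step and the two-sided derivative bound that the paper leaves implicit.
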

\begin{proof} 
Since for any $n \geq 1$, any $x\in  X$, we have
 \begin{equation}
\int_\mathbb{T} D(f^n)_x(w)dw = 1.  \label{(8.3.1)}
\end{equation}
 We necessarily have that $L_-(f) \leq 0 \leq L_+(f)$. By Definition \ref{DEFINITION 8.2.3.} and the chain rule, we clearly have
$\max(L_+(f),-L_-(f)) \leq \log\Vert f\Vert_{C^{0,1}}$.
\end{proof}

%
%

Given an integer $N > 0$, we define for each ${\bf h} = (h_0, \cdots, h_{N-1}) \in {\cal H}^N$ that
\aryst
{\bf L}_+({\bf h}) &=&  \frac{1}{N} \sup\limits_{w \in \T} \log D(\iota_{\cH}(h_{N-1}) \cdots \iota_\cH(h_0))(w), \\
{\bf L}_-({\bf h})  &=&  \frac{1}{N} \inf \limits_{w \in \T} \log D(\iota_{\cH}(h_{N-1}) \cdots \iota_\cH(h_0))(w).
\earyst
By definition, we have
\aryst
L_{\pm}(H) = \lim_{n \to \infty} {\bf L}_{\pm}( (H(g^{i}(x)))_{i=0}^{n-1} ).
\earyst
 
 The following lemma, whose proof we omit, follows immediately from the subadditivity, e.g., $\log \| D(f_1 f_2) \| \leq \log \| D f_1 \| + \log \| D f_2 \|$.
 \begin{lemma} \label{LEMMA limitLE}
 Given  $H \in C^0(X, {\cal H})$. For any $\kappa_0 > 0$, there exists $N'_0 = N'_0(H, \kappa_0) > 0$ such that for any $n > N'_0$ and any $x \in X$, we have
\aryst
[ {\bf L}_-( (H(g^{i}(x)))_{i=0}^{n-1} ),  {\bf L}_+( (H(g^{i}(x)))_{i=0}^{n-1} ) ] 
\subset  ( L_-( H ) - \kappa_0, L_+( H ) + \kappa_0).
\earyst
 \end{lemma}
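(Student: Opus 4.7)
The plan is to apply Fekete's subadditive lemma to the two deterministic scalar sequences
\[
\alpha_n := \sup_{x \in X} \sup_{w \in \T} \log D((f^n)_x)(w), \qquad
\tilde\beta_n := -\inf_{x \in X} \inf_{w \in \T} \log D((f^n)_x)(w),
\]
where $f = \bPhi(H)$. Both are nonnegative by the averaging identity \eqref{(8.3.1)} used in the proof of Lemma \ref{LEMMA 65}, and both are bounded above by $n \log \|f\|_{C^{0,1}}$, hence finite.

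First I would verify subadditivity. The chain rule gives
\[
\log D((f^{n+m})_x)(w) = \log D((f^m)_{g^n(x)})((f^n)_x(w)) + \log D((f^n)_x)(w),
\]
so taking $\sup$ over $(x,w)$ on both sides yields $\alpha_{n+m} \leq \alpha_n + \alpha_m$, and taking $\inf$ and flipping signs yields $\tilde\beta_{n+m} \leq \tilde\beta_n + \tilde\beta_m$. By Fekete's lemma the limits of $\alpha_n/n$ and $\tilde\beta_n/n$ exist and coincide with the respective infima; by Definition \ref{DEFINITION 8.2.3.} these limits equal $L_+(H)$ and $-L_-(H)$. In particular $\alpha_n/n \geq L_+(H)$ and $\tilde\beta_n/n \geq -L_-(H)$ for every $n \geq 1$.

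Fix $\kappa_0 > 0$ and choose $N'_0$ so large that $\alpha_n/n < L_+(H) + \kappa_0$ and $\tilde\beta_n/n < -L_-(H) + \kappa_0$ hold for all $n > N'_0$. Then for every such $n$ and every $x \in X$,
\[
\mathbf{L}_+\bigl((H(g^{i}(x)))_{i=0}^{n-1}\bigr) = \tfrac{1}{n}\sup_w \log D((f^n)_x)(w) \leq \tfrac{\alpha_n}{n} < L_+(H) + \kappa_0,
\]
and dually $\mathbf{L}_-\bigl((H(g^{i}(x)))_{i=0}^{n-1}\bigr) \geq -\tilde\beta_n/n > L_-(H) - \kappa_0$, which is precisely the claimed inclusion. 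There is no real obstacle here: the uniformity in $x$ is automatic because $\alpha_n$ and $\tilde\beta_n$ are already $\sup/\inf$ over all $x \in X$, so any rate of convergence for these scalar sequences transfers immediately to every fiber. Had the definitions of $L_\pm(H)$ instead been fiberwise Birkhoff-type averages, one would need to invoke unique ergodicity of $g$ together with Kingman's subadditive ergodic theorem to upgrade pointwise convergence to uniform convergence, but that machinery is unnecessary in the present setup.
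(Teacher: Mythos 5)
Your proof is correct and is exactly the subadditivity argument the paper has in mind (the paper omits the proof, noting it "follows immediately from the subadditivity"): Fekete applied to $\sup_{x,w}\log D((f^n)_x)(w)$ and $-\inf_{x,w}\log D((f^n)_x)(w)$ gives the one-sided bounds $\alpha_n/n < L_+(H)+\kappa_0$ and $-\tilde\beta_n/n > L_-(H)-\kappa_0$ for large $n$, and these dominate ${\bf L}_\pm$ uniformly in $x$ since the sup/inf over $x$ is already built in. Your closing remark is also apt: the lemma only asserts one-sided bounds, so no Kingman-type uniform convergence is needed.
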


\section{A criterion from stratification} \label{secKZ}

Let $g : X \rightarrow X$ be given at the beginning of Section \ref{sec Mainresults}. That is, $X$ is a compact metric space, and $g$ is strictly ergodic with a non-periodic factor of finite dimension.

\subsection{Dynamical stratification} \label{subsec dynstrat}

As in \cite{ABD2}, for any integers $n,N,d > 0$,  a compact subset $K \subset X$ is said
\enmt
\item \textit{$n$-good} if $g^{k}(K)$ for $0 \leq k \leq n-1$ are disjoint subsets,
\item \textit{$N$-spanning} if the union of $g^{k}(K)$ for $0 \leq k \leq N-1$ covers $X$,
\item \textit{$d$-mild} if for any $x \in X$, $\{g^{k}(x) \mid k \in \Z\}$ enters $\partial K$ at most $d$ times.  
\eenmt

The following is an immediate consequence of \cite[Lemma 5.2 - Lemma 5.4]{ABD2}.
\begin{lemma} \label{LEMMA 58}
There exists an integer $d > 0$ such that for every integer $n > 0$, for every open set $U \subset X$, there exist an integer $D > 0$ and a compact subset $K \subset U$  that is  $n$-good, $D$-spanning and $d$-mild.
\end{lemma}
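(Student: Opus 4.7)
The plan is to combine the three Rokhlin-tower-type constructions in \cite[Lemmas~5.2--5.4]{ABD2} into a single compact set $K$ that simultaneously enjoys all three properties. The universal mildness constant $d$ will depend only on the dimension $d_0$ of the ambient Euclidean space $\R^{d_0}$ containing the factor $Y$; concretely one can take $d = 2 d_0$.

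First I would fix the basepoint and the local geometry. Since $g$ is strictly ergodic on an infinite space, it is aperiodic, so for any $x_0 \in U$ the iterates $x_0, g(x_0), \dots, g^{n-1}(x_0)$ are pairwise distinct; by continuity there is a small open neighborhood $U' \ni x_0$ with $\overline{U'} \subset U$ such that $U', g(U'), \dots, g^{n-1}(U')$ are pairwise disjoint (this is \cite[Lemma~5.2]{ABD2}). Any compact subset of $U'$ is then automatically $n$-good. Next I would exploit the factor to build $K$ with controllable boundary: set $y_0 := h(x_0)$ and define
\[
K := \overline{U'} \cap h^{-1}(B \cap Y),
\]
where $B = \prod_{i=1}^{d_0}[a_i, b_i] \subset \R^{d_0}$ is a closed box centered at $y_0$ with radii small enough that $K \subset U'$. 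Then $\partial K \subset h^{-1}(\partial B \cap Y)$ is contained in the $h$-preimages of the $2 d_0$ coordinate hyperplane pieces of $\partial B$. Because a $g$-orbit of $x$ projects under $h$ to the $\bar g$-orbit of $h(x)$, bounding the number of times each $\bar g$-orbit meets one of these hyperplane pieces bounds the number of times the $g$-orbit meets $\partial K$. Choosing the box parameters $(a_i, b_i)$ generically so that each coordinate hyperplane $\{y_i = a_i\}\cap Y$ and $\{y_i = b_i\}\cap Y$ meets every $\bar g$-orbit at most once gives $d = 2 d_0$.

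The $D$-spanning condition is standard: the interior of $K$ is non-empty (it contains a neighborhood of $x_0$), so by unique ergodicity of $g$ the orbit of every point of $X$ visits this interior within uniformly bounded time, producing some finite $D > 0$ with $\bigcup_{k=0}^{D-1} g^k(K) = X$ (\cite[Lemma~5.3]{ABD2}). The main obstacle is the mildness step: one must verify that a generic choice of box parameters really yields a bound on orbit--hyperplane intersections that is uniform over all base points $y \in Y$ (not just $y_0$). This is where the finite-dimensionality of $Y$ and the aperiodicity of $\bar g$ combine essentially, and is the content of \cite[Lemma~5.4]{ABD2}; the other two steps are routine adaptations of the classical Rokhlin tower argument to the strictly ergodic setting, after which the three properties are collected into the single set $K$ described above.
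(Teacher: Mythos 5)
Your overall route is the same as the paper's, which offers no argument beyond invoking \cite[Lemmas 5.2--5.4]{ABD2}; your identification of the three ingredients ($n$-goodness from aperiodicity of $g$ near a point of $U$, $D$-spanning from minimality once $K$ has non-empty interior, $d$-mildness from the finite-dimensional factor) matches how those lemmas are meant to combine. However, two steps of your sketch do not hold as written.

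First, for $K=\overline{U'}\cap h^{-1}(B\cap Y)$ the inclusion $\partial K\subset h^{-1}(\partial B\cap Y)$ is false: any point of $\partial\overline{U'}$ lying over $\mathrm{int}\,B$ belongs to $K$ but not to $\mathrm{int}\,K$ (since $\mathrm{int}\,K\subset\mathrm{int}\,\overline{U'}$), hence lies in $\partial K$, and nothing in your argument bounds the number of orbit visits to this portion of the boundary. The factor can only control the boundary of a \emph{saturated} set $h^{-1}(L)$ (for which $\partial(h^{-1}(L))\subset h^{-1}(\partial_Y L)$ does hold); fitting such a set, or a suitable substitute, inside an arbitrary open $U\subset X$ is part of what the cited lemmas have to accomplish and is not automatic, because $h^{-1}(h(x_0))$ need not be contained in $U$.

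Second, the mechanism you propose for mildness --- choose the faces generically so that each slice $\{y_i=c\}\cap Y$ meets every $\bar{g}$-orbit at most once, whence $d=2d_0$ --- is not justified and fails for general embeddings. The coordinate function $\phi(y)=y_i$ restricted to $Y$ is just some continuous function, and one can arrange (already for $Y$ a topological circle in $\R^3$ carrying a minimal rotation) that $\phi(y)=\phi(\bar{g}(y))$ on an open set whose $\phi$-image is a whole interval $J$; then for \emph{every} $c\in J$ some orbit meets $\{y_i=c\}\cap Y$ at least twice, and for any fixed $m$ a similar construction defeats the bound $m$ on a whole interval of face positions. So ``a generic face is met at most once'' is not the content of \cite[Lemma 5.4]{ABD2}; the uniform bound there comes from a more delicate argument in which the smallness of the set relative to return times plays a role. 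Since you ultimately defer this step to the citation, just as the paper does, your proposal rests on the same external input; but the heuristic you offer for it would not survive being turned into a proof, and the set $K$ you build would need to be modified before the boundary estimate even applies to it.
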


Let  $K \subset X$ be a  $n$-good, $M$-spanning and $d$-mild compact subset. For each $x \in X$, we set
\aryst
&&l^{+}(x) = \min\{j > 0 \mid g^{j}(x) \in int(K)\}, \quad l^{-}(x) = \min\{j \geq 0 \mid g^{-j}(x) \in int(K) \}, \\
&&l(x) = \min\{j > 0 \mid g^{j}(x) \in K \}, \\
&&T(x) = \{j \in \Z \mid - l^{-}(x)  < j < l^{+}(x)\}, \quad T_B(x) = \{j \in T(x) \mid g^{j}(x) \in \partial K \}, \\
&&N(x) = \#T_B(x), \quad K^{i} = \{x \in K \mid N(x) \geq d-i\},\forall -1 \leq i \leq d.
\earyst
Let $Z^{i} = K^{i} \setminus K^{i-1} = \{x \in K \mid N(x) = d-i\}$ for each $0 \leq i \leq d$.

Lemma \ref{LEMMA 58} and the notations introduced above are minor modifications of those in the proof of \cite[Lemma 4.1]{ABD2}. We also have the following (see \cite{ABD2} and also  \cite[Lemma  7]{Zha}).
\begin{lemma}\label{LEMMA 59} We have
\enmt 
\item For any $x \in K$, $l(x) \leq l^{+}(x)$ and $n \leq l(x) \leq D$,
\item $T$ and  $T_B$ are upper-semicontinuous,
\item $T$ and $T_B$, and hence also $l$, are locally constant on $Z^{i}$,
\item $K^i$ is closed for all $-1 \leq i \leq d$ and $\emptyset = K^{-1}  \subset K^{0} \subset \cdots \subset K^{d} = K$,
\item For any $x \in K^{i}$, any $0 \leq m < l^{+}(x)$ such that $g^{m}(x) \in K$, we have $g^{m}(x) \in K^{i}$.
\eenmt
\end{lemma}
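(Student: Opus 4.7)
\smallskip

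\noindent\textbf{Proof plan for Lemma \ref{LEMMA 59}.} The plan is to verify the five items in order, as each relies on the preceding one. The key computational input throughout is a shift formula relating the data at $x$ and at an iterate $g^m(x)$ that stays in $K$.

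First, for (1), the inequality $l(x) \leq l^+(x)$ is immediate from $\mathrm{int}(K) \subset K$. The lower bound $n \leq l(x)$ is a direct consequence of $n$-goodness: for $x \in K$ and $1 \leq k \leq n-1$, $g^k(x) \in g^k(K)$, which is disjoint from $K$. For the upper bound, $D$-spanning means $g^D(x) \in X = \bigcup_{0 \leq k \leq D - 1} g^k(K)$, hence (using that $g$ is invertible) $g^{D-k}(x) \in K$ for some $0 \leq k \leq D - 1$, giving $l(x) \leq D$.

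For (2), I would show that $l^\pm$ are upper-semicontinuous as $\Z$-valued functions: $g^{l^+(x)}(x) \in \mathrm{int}(K)$ and openness of the interior give $g^{l^+(x)}(x_m) \in \mathrm{int}(K)$ eventually, so $l^+(x_m) \leq l^+(x)$; similarly for $l^-$. Consequently $T(x_m) \subset T(x)$ eventually; and since $\partial K$ is closed, $g^j(x_m) \in \partial K$ with $x_m \to x$ forces $g^j(x) \in \partial K$, so $T_B(x_m) \subset T_B(x)$ eventually. For (3), the cardinalities give $N(x_m) \leq N(x)$ eventually, which combined with $N \equiv d - i$ on $Z^i$ yields $T_B(x_m) = T_B(x)$ near $x$. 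Suppose toward contradiction that $l^+(x_m) < l^+(x)$; then $l^+(x_m) \in T(x) \setminus T(x_m)$, and $g^{l^+(x_m)}(x_m) \in \mathrm{int}(K)$ combined with $g^{l^+(x_m)}(x) \notin \mathrm{int}(K)$ and closedness of $K$ gives $g^{l^+(x_m)}(x) \in \partial K$. Hence $l^+(x_m) \in T_B(x) = T_B(x_m)$, but $T_B(x_m) \subset T(x_m)$, a contradiction. Symmetrically $l^-$ is locally constant, so $T$ is too. Since $l(x) = \min\bigl((T_B(x) \cap \Z_{>0}) \cup \{ l^+(x) \}\bigr)$, it is locally constant on $Z^i$ as well.

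For (4), $K^d = K$ since $N \geq 0$, and $K^{-1} = \emptyset$ because $d$-mildness forces $N(x) \leq d$ for every $x \in X$ (in particular for $x \in K$). The inclusions $K^i \subset K^{i+1}$ are obvious. Closedness of $K^i$ follows from (2): since $N(x_m) \leq N(x)$ eventually whenever $x_m \to x$, the function $N$ is upper-semicontinuous as a $\Z$-valued function on $K$, so $\{ N \geq d - i \}$ is closed in $K$, and $K$ itself is closed in $X$.

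The main computation is for (5), and it is also where I expect the bookkeeping to be most delicate. Setting $y = g^m(x) \in K$ with $0 \leq m < l^+(x)$, I would first establish $l^+(y) = l^+(x) - m$: the upper bound comes from applying $g^j$ with $j = l^+(x) - m$, while the lower bound uses that for $0 < j < l^+(x) - m$ one has $0 < m + j < l^+(x)$, so by minimality of $l^+(x)$ the point $g^{m+j}(x)$ misses $\mathrm{int}(K)$. A parallel case analysis (splitting on whether $x \in \mathrm{int}(K)$ or $x \in \partial K$, i.e.\ whether $l^-(x) = 0$) yields $l^-(y) = m + l^-(x)$. These two identities give $T(y) = T(x) - m$ as subsets of $\Z$, and composing with $g^j(y) = g^{m+j}(x)$ gives $T_B(y) = T_B(x) - m$. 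In particular $N(y) = N(x) \geq d - i$, so $y \in K^i$. The subtle point will be correctly handling the boundary case $x \in \partial K$ so that the shift formulas hold uniformly, but once that is done, items (1)--(5) all fall out cleanly.
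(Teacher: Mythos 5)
Your argument is correct: all five items check out, including the key shift identities $l^+(g^m(x))=l^+(x)-m$, $l^-(g^m(x))=l^-(x)+m$ for item (5) and the contradiction argument showing $l^\pm$ are locally constant on $Z^i$ in item (3). The paper itself gives no proof, deferring to \cite{ABD2} and \cite[Lemma 7]{Zha}; your write-up is exactly the standard elementary verification carried out there, so nothing further is needed.
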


\subsection{A criterion for mode-locking}

Given a $g$-forced circle homeomorphism $f$, a lift of $f$ denoted by $F$, and a compact subset $K \subset X$ that is $M$-spanning for some $M > 0$, let
\aryst
  f_K(x,w)& = &f^{l(x)}(x,w), \ \forall(x,w)  \in K \times \mathbb{T},\\
  F_K(x,y)& =& F^{l(x)}(x,y), \ \forall(x,y) \in K \times \mathbb{R}.   
\earyst
We have the following sufficient condition for mode-locking.

\begin{lemma}\label{lemma 66} If there exists an open set ${\cal R} \subset   K\times\mathbb{T}$ (with respect to the induced topology on $K \times \mathbb{T}$)
 such that for each $x\in K$, we have ${\cal R} \cap ( \{ x \}\times \mathbb{T})= \{ x \} \times I_x$ for some non-empty open interval $ I_x \subsetneqq\mathbb{T}$, and $f_K(\overline{\cal{R}})\subset \cal{R}$, then $f \in \mathcal{ML}$.
\end{lemma}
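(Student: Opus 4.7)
The plan is to invoke Lemma \ref{claim equivdefmodelock}: it suffices to produce $\epsilon > 0$ with $\rho(F_{-\epsilon}) = \rho(F) = \rho(F_{\epsilon})$, where $F_{t}(x, y) := F(x, y) + t$. I will show that $\mathcal{R}$ forces $\rho(F)$ to equal a limit involving an integer-valued ``winding function'' on $K$ that is stable under small vertical perturbations of $F$.

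For each $x \in K$, fix a lift $\widetilde{I}_x = (\tilde a(x), \tilde b(x)) \subset [0, 2)$ of $I_x$ (for instance with $\tilde a(x) \in [0, 1)$); such a lift exists because $I_x \subsetneq \mathbb{T}$ is proper, so $|I_x| < 1$. Since $(F^{l(x)})_x$ is an orientation-preserving homeomorphism of $\mathbb{R}$ and $f_K(\overline{\mathcal{R}}) \subset \mathcal{R}$, there is a unique integer $k(x) \in \mathbb{Z}$ with
\[
(F^{l(x)})_x(\overline{\widetilde{I}_x}) \subset \widetilde{I}_{g^{l(x)}(x)} + k(x),
\]
and the image sits in the open interior of the right-hand side. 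Starting from any point $(x_0, w_0) \in \mathcal{R}$ with a lift $y_0 \in \widetilde{I}_{x_0}$, and setting $x_{i+1} = g^{l(x_i)}(x_i)$, one iterates using the commutation of $F$ with integer fiber translations to obtain
\[
(F^{\tau_n(x_0)})_{x_0}(y_0) \in \widetilde{I}_{x_n} + \sum_{i=0}^{n-1} k(x_i), \qquad \tau_n(x_0) := \sum_{i=0}^{n-1} l(x_i).
\]
Because $\widetilde{I}_{x_n} \subset [0, 2)$ for every $n$ and $\tau_n(x_0) \to \infty$ (since $l \geq 1$ on $K$), dividing by $\tau_n(x_0)$ and invoking the definition of $\rho(F)$ gives
\[
\rho(F) = \lim_{n \to \infty} \frac{\sum_{i=0}^{n-1} k(x_i)}{\sum_{i=0}^{n-1} l(x_i)}.
\]

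It remains to show that $\mathcal{R}$, together with the same winding function $k$, works for $F_t$ when $|t|$ is small, so that the same formula gives $\rho(F_t) = \rho(F)$ throughout a neighborhood of $t = 0$. By Lemma \ref{LEMMA 59} one has $l \leq D$ on $K$, so the return map $(f_t)_K$ is within $O(|t|)$ of $f_K$ uniformly on $K \times \mathbb{T}$. The strict containment $f_K(\overline{\mathcal{R}}) \subset \mathcal{R}$ combined with a uniform gap argument---stratifying $K$ by the finitely many values of $l$, on each of which $f_K$ is continuous---yields $(f_t)_K(\overline{\mathcal{R}}) \subset \mathcal{R}$, and continuity of the fiber maps in $t$ forces the same integer $k(x)$ to realize the invariance for $F_t$. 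Lemma \ref{claim equivdefmodelock} then concludes $f \in \mathcal{ML}$. The principal technical obstacle is that $f_K$ need not be globally continuous on $K \times \mathbb{T}$, since $l$ may jump between the strata $Z^i$; this is addressed via the finiteness of the range of $l$ (bounded by $D$) together with its local constancy on each $Z^i$, which allows the uniform gap to be extracted stratum by stratum.
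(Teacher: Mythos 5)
Your proposal is correct and takes essentially the same route as the paper: both reduce to Lemma \ref{claim equivdefmodelock} by first extracting a uniform $\delta>0$ with $(f_t)_K(\overline{\mathcal{R}})\subset\mathcal{R}$ for $|t|<\delta$ and then tracking integer translates of the fiber intervals along return orbits, the only cosmetic difference being that you make the winding integers $k(x)$ explicit whereas the paper proves inductively that $|\pi_2((F_\epsilon)_K)^n(x,y)-\pi_2(F_K)^n(x,y)|<1$ for all $n$. The uniform-gap step is stated without proof in the paper as well; your stratification sketch works, with the one caveat that a limit $x$ of points $x_n$ with $l(x_n)=c$ may satisfy $l(x)<c$, in which case one notes that $c$ is then a sum of consecutive first-return times of $x$, so $f^c=(f_K)^m$ at $x$ for some $m\geq 1$ and the limit of $f^c(x_n,w_n)$ still lands in $\mathcal{R}$, yielding the required contradiction.
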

\begin{proof}
 Since $K$ is $M$-spanning and $f_K(\overline{{\cal R}})\subset {\cal R}$, there exists $\delta = \delta(f,M,\mathcal{R})> 0$ such that for any $\epsilon\in (-\delta, \delta)$, we have $(f_\epsilon)_K(\overline{{\cal R}}) \subset \mathcal{R}$. We inductively define a sequence of functions as follows:
  \aryst
     l_n(x) =
     \begin{cases}
       l(x),   & n=0, \\
       l(g^{\Sigma^{n-1}_{j=0} l_j(x)}(x)), & n > 0. 
       \end{cases}
\earyst
 Then it is direct to show, by an induction on $n$, that for any $x \in K$, $y\in\mathbb{R}$ so that $(x,y \text{ mod } 1)\in {\cal R}$, we have
$$ 1> |\pi_2((F_\epsilon)_K)^n(x,y) - \pi_2(F_K)^n(x,y)|=|(F^{\Sigma^{n-1}_{i=0} l_i(x)}_\epsilon)_x(y) ) - (F^{\Sigma^{n-1}_{i=0} l_i(x)})_x(y)|, \ \forall n\geq 0,$$
where $\pi_2: X \times\mathbb{R} \rightarrow \mathbb{R}$ is the canonical projection. This implies the $\rho(F_\epsilon) = \rho(F)$
for all $\epsilon \in (- \delta, \delta)$ and thus concludes the proof.
\end{proof}

\section{Density of zero Lyapunov exponent}

The goal of this section is to prove Theorem \ref{Theorem 30}. 
Throughout this section, we assume that  $(\cH, d_\cH, \iota_\cH)$ is in Example \ref{exam I} or \ref{exam II}.

We first introduce the following notion.
\begin{definition} \label{def contractable}
We say that $H \in C^{0}(X, {\cal H})$ is {\it contractable} if 
\enmt
\item either $(\cH, d_\cH, \iota_\cH)$ is  in Example \ref{exam I}, 
\item or $(\cH, d_\cH, \iota_\cH)$ is  in Example \ref{exam II}, and  there exists an integer $k > 1$ such that for any $x \in X$ and any $w \in \T$, there exists $0 \leq i \leq k - 2$ such that $P''((f^{i + 1})_{x}(w)) \neq 0$, where $f = \Phi(H)$.
\eenmt
\end{definition}

\begin{lemma} \label{lem densityofcontractable}
The set of contractable $H \in C^{0}(X, {\cal H})$ is open and dense in $C^{0}(X, {\cal H})$ with respect to the metric $D_{\cal H}$.
\end{lemma}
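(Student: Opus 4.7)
The plan is to handle the Example \ref{exam I} case trivially, since there every $H$ is contractable by definition, and to focus on Example \ref{exam II}. Since $P$ is a non-constant real analytic function on $\T$, its second derivative $P''$ is analytic and not identically zero (an affine periodic function on $\T$ is constant), so its zero set $Z := \{w \in \T : P''(w) = 0\}$ is finite. If $Z = \emptyset$, every $H$ is contractable with witness $k = 2$; henceforth assume $Z \ne \emptyset$ and write $f = \bPhi(H)$.

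For openness, I would fix a contractable $H$ with witness $k > 1$ and note that the continuous non-negative function $(x, w) \mapsto \max_{0 \le i \le k - 2} |P''((f^{i+1})_x(w))|$ on the compact set $X \times \T$ is strictly positive, hence bounded below by some $\delta > 0$. For any $H'$ sufficiently close to $H$ in $D_{\cH}$, the iterates of the associated skew product up to order $k - 1$ stay uniformly close to those of $f$, so by uniform continuity of $P''$ the lower bound $\delta/2$ persists and openness follows with the same witness $k$.

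For density the heart of the argument is a rigidity statement: if $H$ is non-contractable, then $H \mod 1$ takes values in a finite set on all of $X$. To prove it I would first use non-contractability to produce, for each integer $k \ge 2$, a point $(x_k, w_k)$ with $(f^j)_{x_k}(w_k) \in Z$ for $1 \le j \le k - 1$, and then extract by a standard compactness diagonalization a limit $(x, w)$ with $w_j := (f^j)_x(w) \in Z$ for every $j \ge 1$. The skew-product recursion $w_{j+1} \equiv w_j + P(w_j) + H(g^j(x)) \mod 1$ forces $H(g^j(x)) \mod 1$ to lie in the finite set
\[
V := \{\, z' - z - P(z) \mod 1 \;:\; z, z' \in Z \,\}
\]
for every $j \ge 1$. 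Minimality of $g$ makes the forward orbit $\{g^j(x) : j \ge 1\}$ dense in $X$, and continuity of $H$ together with finiteness of $V$ then yields $H(y) \mod 1 \in V$ for every $y \in X$.

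To conclude density, I would let $\delta_V > 0$ denote the minimum $\T$-distance between distinct elements of $V$ and set $H' := H + c$ for some $c \in \R$ with $0 < |c| < \delta_V$. Then $H'(y) \mod 1 \in V + c$ for all $y$, and by the choice of $c$ the set $V + c$ is disjoint from $V$ in $\T$. Were $H'$ still non-contractable, the rigidity step applied to $H'$ would force $H'(y) \mod 1 \in V$, a contradiction; hence $H'$ is contractable, and since $|c|$ can be taken arbitrarily small this gives density. The main obstacle is the rigidity step itself: without the observation that non-contractability combined with minimality of $g$ pins $H \mod 1$ into a finite set globally on $X$, one would face the daunting task of destroying potentially many bad orbits independently, whereas rigidity reduces the perturbation to a single global constant shift.
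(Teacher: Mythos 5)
Your proof is correct, and the density half takes a genuinely different route from the paper's. The paper perturbs $H$ so that at a single fiber the circle map $\check f_{x_0}$ moves the finite zero set of $P''$ entirely off itself, then spreads this property by continuity to a neighborhood of $x_0$ and to all of $X\times\T$ by minimality of $g$ (so every orbit segment of bounded length meets that neighborhood, forcing a non-degenerate iterate within two steps). You instead prove a rigidity statement: non-contractability forces, via a compactness limit, a full forward orbit inside the zero set $Z$ of $P''$, whence the recursion pins $H(g^j(x))\bmod 1$ into the finite set $V$ determined only by $P$ and $Z$, and minimality plus continuity globalizes this to $H(\cdot)\bmod 1\in V$ on all of $X$; a single constant shift $H+c$ then cannot satisfy the same constraint, so it must be contractable. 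Both arguments are sound and both rest on minimality; yours buys a stronger structural conclusion (an explicit finite-set obstruction characterizing non-contractable maps, and a perturbation that is just a global constant), while the paper's is more local and avoids the limiting argument. Two trivial points to tidy: in the final step you should take $0<|c|<\min(\delta_V,1)$ so that the case where $V$ is a singleton (where $\delta_V$ is vacuously infinite) still excludes $c\in\Z$; and your openness argument, which the paper dismisses as clear, is correctly spelled out via the uniform positive lower bound of $(x,w)\mapsto\max_{0\le i\le k-2}|P''((f^{i+1})_x(w))|$ on the compact $X\times\T$.
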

\begin{proof}
The openness is clear from the definition. To show the density, we will show that given any $H \in C^{0}(X, {\cal H})$ and an arbitrary $\epsilon > 0$, we can construct some $H' \in  C^{0}(X, {\cal H})$ such that $H'$ is contractable and $D_{\cal H}(H, H') < \epsilon$.

If $(\cH, d_\cH, \iota_\cH)$ is  in Example \ref{exam I}, then it suffices to take $H' = H$.

Now assume that $(\cH, d_\cH, \iota_\cH)$ is  in Example \ref{exam II}.
Denote $A = \{ w \in \T \mid P''(w) \neq 0 \}$. Since $P$ is a non-constant analytic function, $A$ is a finite set.
For any $\epsilon > 0$, there exists $\check{H} \in \cB_{\cal H}(H, \epsilon)$ such that, by denoting $\check{f} = \Phi(\check{H})$, we have $\check{f}_{x_0}(A) \cap A = \emptyset$ for some $x_0 \in X$. By continuity, for every $x$ sufficiently close to $x_0$, we have $\check{f}_{x}(A) \cap A = \emptyset$. We can then easily deduce that $\check{H}$ is contractable by the minimality of $g$.
\end{proof}

 The following lemma gives the key property of  a contractable element that we will use.  We will only need this lemma in Section \ref{sec Negative Lyapunov exponent}, and the readers can skip it during the first reading, and come back here later.
\begin{lemma} \label{lem arnoldfamilyismovable}
If  $H \in C^{0}(X, {\cal H})$ is contractable, then the following holds:
 there exist some $\epsilon > 0$, an integer $k > 0$ and a continuous map $E : [0, 1] \times \T \times X \times \cB_{\cH}(H, \epsilon) \to \cH^k$
 such that if we denote $E(\sigma, w, x, \check{H} ) = ( h^{\sigma, w}_{i} )_{i=0}^{k-1}$ then we have:
 \enmt
 \item $h^{0, w}_i = \check{H}(g^i(x))$ for every $w \in \T$ and every $0 \leq i \leq k-1$,
 \item $\iota_\cH(( h^{\sigma, w}_{i} )_{i=0}^{k-1})(w) = \iota_\cH(( \check{H}(g^i(x)) )_{i=0}^{k-1})(w)$,
 \item for every $\sigma_0 \in (0, 1)$ there exist $r_0, \epsilon_2 > 0$ such that 
 \aryst
 &&D(\iota_\cH(( h^{\sigma_0, w}_{i} )_{i=0}^{k-1}))(y) < e^{-\epsilon_2} D(\iota_\cH(( \check{H}(g^i(x)) )_{i=0}^{k-1}))(w), \ y \in (w - r_0, w + r_0),  \\ 
 &&   \iota_\cH(( h^{\sigma', w}_{i} )_{i=0}^{k-1})([w - r, w + r]) \Subset  \iota_\cH(( h^{\sigma, w}_{i} )_{i=0}^{k-1})((w - r, w + r)),  0 \leq \sigma < \sigma' \leq \sigma_0, 0 < r < r_0.
 \earyst
 \eenmt
\end{lemma}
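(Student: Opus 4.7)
The plan is to treat the two cases of Definition~\ref{def contractable} separately but in parallel. In both cases I will construct, continuously in $(\sigma, w, x, \check H)$, a one-parameter family $\sigma \in [0,1]$ of perturbed compositions $F_\sigma := \iota_\cH((h_i^{\sigma, w})_{i=0}^{k-1})$ that fixes the image of $w$ and makes $\sigma \mapsto DF_\sigma(w)$ strictly decreasing on $[0,1]$. From this strict monotonicity, Property~(3) follows in a routine way: the derivative bound at $\sigma_0$ from continuity of $y \mapsto DF_{\sigma_0}(y)$ at $y = w$, and the strict containment of images from the fact that $F_\sigma(w)$ is independent of $\sigma$ by Property~(2), combined with the expansion $\partial_\sigma F_\sigma(y) = (y - w)\,\partial_\sigma DF_\sigma(w) + O((y - w)^2)$, whose sign is opposite to that of $y - w$ for $y$ close enough to $w$.

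For Example~\ref{exam I} I take $k = 1$. Fix a smooth $\Z$-periodic function $\zeta : \R \to \R$ with $\zeta(0) = 0$, $\zeta'(0) = 1$ and $t \zeta(t) > 0$ in a punctured neighborhood of $0$, and a constant $c \in (0, 1)$ small enough that $\psi_\sigma^w(y) := y - c \sigma \zeta(y - w)$ lies in $\widetilde{\Diff^r(\T)}$ for every $(\sigma, w) \in [0,1] \times \T$. Setting $h_0^{\sigma, w} := \check H(x) \circ \psi_\sigma^w$, Properties~(1) and~(2) follow immediately from $\psi_0^w = \id$ and $\psi_\sigma^w(w) = w$, while the sign condition on $\zeta$ produces the strict monotonicity of $\sigma \mapsto h_0^{\sigma, w}(y)$ for $y$ near but not equal to $w$ and the derivative bound $D h_0^{\sigma, w}(w) = (1 - c \sigma) D \check H(x)(w)$.

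For Example~\ref{exam II}, where $\iota_\cH(h)(v) = v + P(v) + h$ and $h \in \R$, let $k$ be the integer from contractability and write $w_i := (\check f^i)_x(w)$. Define, for $j = 0, \dots, k-2$,
\aryst
A_j(x, w, \check H) := \sum_{i = j+1}^{k-1} \frac{P''(w_i)}{1 + P'(w_i)} \prod_{l = j+1}^{i-1}(1 + P'(w_l)),
\earyst
and put $h_j^{\sigma, w} := \check H(g^j(x)) - \lambda \sigma A_j(x, w, \check H)$ for $j = 0, \dots, k-2$, where $\lambda > 0$ is a constant to be fixed below. Letting $\tilde w_i$ denote the perturbed orbit of $w$, I enforce Property~(2) by the explicit formula $h_{k-1}^{\sigma, w} := (\check f^k)_x(w) - \tilde w_{k-1} - P(\tilde w_{k-1})$. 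Since $\tilde w_i$ for $i \leq k-1$ is independent of $h_{k-1}^{\sigma, w}$, a chain-rule calculation gives
\aryst
\partial_\sigma \log D F_\sigma(w)\big|_{\sigma = 0} = -\lambda \sum_{j = 0}^{k-2} A_j(x, w, \check H)^2.
\earyst

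The main obstacle is to pin down $\sum_j A_j^2$ away from zero uniformly. A descending induction on $j$ starting at $j = k - 2$ shows that $A_{k-2} = \dots = A_{j+1} = 0$ forces $A_j = P''(w_{j+1}) / (1 + P'(w_{j+1}))$; hence all $A_j$ vanish simultaneously iff $P''(w_i) = 0$ for every $i = 1, \dots, k - 1$, which is exactly excluded by contractability of $H$. A standard joint-continuity-plus-compactness argument on $X \times \T$ then gives a uniform lower bound $\sum_j A_j^2 \geq c_0 > 0$ for all $\check H$ in a sufficiently small ball $\cB_\cH(H, \epsilon)$. Taking $\lambda > 0$ small enough then ensures $\partial_\sigma \log D F_\sigma(w) \leq -\lambda c_0 / 2$ throughout $[0, 1] \times \T \times X \times \cB_\cH(H, \epsilon)$, which combined with the monotonicity reduction of the first paragraph completes the proof.
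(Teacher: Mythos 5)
Your construction is correct, and for Example \ref{exam II} it takes a genuinely different route from the paper's. The paper first builds, for each pair of consecutive steps $(m,m+1)$, a two-parameter perturbation $(t,-c(x,w,t))$ whose composition fixes the relevant orbit point exactly (the compensator $c$ is solved for explicitly), observes that the resulting first-order change of the derivative at that point has the sign of $P''((f^{m+1})_x(w))$, and then glues these local moves with a partition of unity of $X\times\T$ subordinate to a cover on which $P''((f^{m_\alpha+1})_x(w))\neq 0$; contractability enters only through the existence of that cover. You instead perturb all coordinates at once along the explicit descent direction $(-\lambda\sigma A_j)_j$ for $\log DF_\sigma(w)$, enforce item (2) by solving for the last coordinate alone, and reduce the needed nondegeneracy to the telescoping identity $A_j=\tfrac{P''(w_{j+1})}{1+P'(w_{j+1})}+(1+P'(w_{j+1}))A_{j+1}$, which shows that all $A_j$ vanish exactly when $P''(w_i)=0$ for $1\le i\le k-1$ --- precisely the negation of contractability. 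This buys you a cleaner quantitative statement (an explicit uniform rate $-\lambda c_0/2$) and avoids the partition of unity and sign bookkeeping, at the price of having to justify two small points you currently gloss: (i) $h^{\sigma,w}_{k-1}$ is defined by a formula on $\T$ and must be lifted to $\cH=\R$ continuously (take the representative closest to $\check H(g^{k-1}(x))$, which is unambiguous once $\lambda$ is small since the correction vanishes at $\sigma=0$); and (ii) the passage from $\partial_\sigma\log DF_\sigma(w)|_{\sigma=0}=-\lambda\sum_jA_j^2$ to a bound valid for all $\sigma\in[0,1]$ needs the uniform continuity of $t\mapsto\partial_{t_j}\log DF$ on a bounded set, which your ``$\lambda$ small'' remark does supply. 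For Example \ref{exam I} your composition with $\psi^w_\sigma$ is an elementary, self-contained substitute for the paper's one-line appeal to the $SL(2,\R)$ projective action, and is no less rigorous than the original.
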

\begin{proof}

This lemma is clear if  $(\cH, d_\cH, \iota_\cH)$ is in Example \ref{exam I}, since we can make perturbations using the projective action on the circle by $SL(2, \R)$.

Now we assume that $(\cH, d_\cH, \iota_\cH)$ is in Example \ref{exam II}. Let us denote $f = \Phi(H)$, and let $k$ be as in Definition \ref{def contractable}.

Fix an arbitrary $x \in X$. 
We define a function $c: X  \times \T \times \R \to \R$ by
\aryst
c(x, w, t) =  P(w) + H(x) + t + P(w + P(w) + H(x) + t) + H(g(x)).
\earyst
It is clear that $c$ is continuous.
By definition, we have
\ary \label{eq partialtat0is0}
 (f_{g(x)})_{-c(x, w, t) } \circ (f_x)_t(w)  =  w.
\eary
By straightforward computation, given any $w_0 \in \T$, we have
\ary \label{eq partialypartialtnot0}
\partial_t  \partial_w  \{ ( f_{g(x)})_{-c(x, w_0, t)} \circ (f_x)_t(w) \}|_{t = 0, w = w_0} = 
P''( f_x(w_0) )( 1 + P'(w_0)) \neq 0
\eary
as long as $P''( f_x(w_0) ) \neq 0$. In this case, the above term has the same sign as $P''( f_x(w_0) )$.

By compactness and our hypothesis that $H$ is contractable, there exists a finite open covering $\{  S_{\alpha} \times T_{\alpha} \}_{\alpha \in I}$ of $X \times \T$, such that for each $\alpha \in I$, there exists an integer $0 \leq m_{\alpha} \leq k - 2$ such that $P''( (f^{m_{\alpha} + 1})_{x}(w)) \neq 0$ for every $(x, w) \in S_{\alpha} \times T_{\alpha}$. 
Since $X$ is locally compact and Hausdorff, there exists a partition of unity $\{ \rho_{\alpha}  \}_{\alpha \in I}$ subordinated to $\{  S_{\alpha} \times T_{\alpha} \}_{\alpha \in I}$.
Moreover, again by compactness and by \eqref{eq partialypartialtnot0}, there exists some $\delta > 0$ such that
\ary
&&\inf_{t \in (0, \delta]} \inf_{\alpha \in I} \inf_{(x, w_0) \in S_{\alpha} \times T_{\alpha}} t^{-1} |   \partial_w  \{ (f_{g(x)})_{-c(x, w_0, t)} \circ (f_x)_t(w) \}|_{ w = (f^{m_{\alpha}})_{x}(w_0)} |  \nonumber \\
&\geq& \inf_{\alpha \in I} \inf_{(x, w_0) \in S_{\alpha} \times T_{\alpha}} | P''((f^{m_{\alpha} + 1})_{x}(w_0))(1 + P'((f^{m_{\alpha}})_{x}(w_0))) + o(1)  | > 0. \label{eq derivativelowerbound}
\eary

For any $0 \leq i \leq k -1$, we denote 
\aryst
e_{k, i} = (\delta_{0, i}, \delta_{1, i}, \cdots,  \delta_{k-1, i}) \in \R^{k}.
\earyst
We define a map $Q: X \times \T \times [0,1] \to \R^{k}$ by
\aryst
Q(x, w, t) = \sum_{\alpha \in I} \rho_{\alpha}(x, w) \sgn( P''((f^{m_{\alpha} + 1})_x(w)) )( \delta t e_{k, m_{\alpha}} - c(x, w, \delta t )  e_{k, m_{\alpha} + 1}).
\earyst
We may define 
\aryst
 E(\sigma, w, x, \check{H}) = (\check{H}(g^{i}(x)) )_{i=0}^{k - 1} -  Q(x, w, \sigma).
\earyst
We clearly have item (1). We can deduce item (2) from \eqref{eq partialtat0is0}.
We can deduce item (3) for $\check{H} = H$ by  a straightforward computation using  \eqref{eq partialypartialtnot0} and \eqref{eq derivativelowerbound}. Then we can verify item (3) for a general $\check{H} \in \cB_{\cH}(H, \epsilon)$ by continuity.
\end{proof}

From the above proof, we also have the following result, which will be used in the proof of Lemma \ref{lem getparabolicelement}.
\begin{lemma} \label{lem moveonefixone}
Given an arbitrary $H \in C^0(X, \cH)$ and denote $f = \Phi(H)$. Then
for any $x \in X$, $w_0, w_1 \in \T$ with $w_0 \neq w_1$, for any $\epsilon > 0$, there exists $(p_0, p_1) \in \cB(H(x), \epsilon) \times \cB(H(g(x)), \epsilon)$ such that $\iota_{\cH}(p_1) \iota_{\cH}(p_0) (w_0) = (f^2)_x(w_0)$ and  $\iota_{\cH}(p_1) \iota_{\cH}(p_0) (w_1) \neq (f^2)_x(w_1)$.
\end{lemma}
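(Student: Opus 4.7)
The plan is to split into the two cases $(\cH, d_\cH, \iota_\cH) \in $ Example \ref{exam I} or Example \ref{exam II}, writing throughout $a_0 := H(x)$, $a_1 := H(g(x))$, $u_0 := f_x(w_0)$, and $u_1 := f_x(w_1)$. The key preliminary remark, used in both cases, is that $u_0 \ne u_1$ in $\T$, since $f_x$ is a homeomorphism of $\T$ and $w_0 \ne w_1$.

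For Example \ref{exam I} I would take $p_1 := a_1$ and construct $p_0$ by a localized bump perturbation of $a_0$. Concretely, pick a smooth $1$-periodic $\tilde\psi : \R \to \R$ supported modulo $1$ in a small neighborhood of $w_1$ that avoids $w_0$, with $\tilde\psi(w_1) \ne 0$, and set $p_0(y) := a_0(y) + \eta\, \tilde\psi(y)$ for small $\eta > 0$. Since $a_0'$ is $1$-periodic and bounded below by a positive constant on $[0,1]$, for $\eta$ small enough $p_0 \in \widetilde{\Diff^r(\T)}$ and $d_\cH(p_0, a_0) < \epsilon$ (using continuity of inversion in $\Diff^r$ to control $p_0^{-1}$ as well). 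By construction $\iota_\cH(p_0)(w_0) = f_x(w_0)$ while $\iota_\cH(p_0)(w_1) \ne f_x(w_1)$, so composing with the homeomorphism $\iota_\cH(a_1)$ gives the required equality at $w_0$ and inequality at $w_1$.

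For Example \ref{exam II} I would use the one-parameter family $p_0 := a_0 + t$ and determine $p_1 = p_1(t)$ uniquely from the constraint $\iota_\cH(p_1)\iota_\cH(p_0)(w_0) = (f^2)_x(w_0)$. A direct computation yields the explicit formula
\[
p_1(t) \;=\; a_1 - t + P(u_0) - P(u_0 + t),
\]
which is continuous in $t$ with $p_1(0) = a_1$, so $(p_0, p_1(t)) \in \cB_\cH(a_0, \epsilon) \times \cB_\cH(a_1, \epsilon)$ for all sufficiently small $|t|$. Evaluating the composition at $w_1$ and subtracting $(f^2)_x(w_1)$ gives, after cancellation,
\[
\Delta(t) \;:=\; \iota_\cH(p_1(t))\, \iota_\cH(p_0)(w_1) - (f^2)_x(w_1) \;=\; \bigl(P(u_1 + t) - P(u_1)\bigr) - \bigl(P(u_0 + t) - P(u_0)\bigr).
\]
If $\Delta \equiv 0$ on a neighborhood of $0$, then by analyticity of $P$ it vanishes on all of $\R$; integrating $P(\cdot + (u_1 - u_0)) - P(\cdot)$ over one period then forces that function to be zero, so $u_1 - u_0$ is a period of $P$. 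Since $P$ has no period in $(0,1)$ and $u_0 \ne u_1$ in $\T$, this is impossible, hence some arbitrarily small $t \ne 0$ realizes $\Delta(t) \ne 0$, as required.

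The main obstacle is precisely this last non-degeneracy step in Example \ref{exam II}: one must upgrade the trivial equality $\Delta(0) = 0$ to the statement $\Delta \not\equiv 0$ near $0$. Analyticity of $P$ is what converts the local question into the global one of whether $u_1 - u_0$ is a period of $P$, and the minimal-period hypothesis built into Example \ref{exam II} then rules it out. The diffeomorphism case is essentially obstacle-free, since the support of the bump decouples $w_0$ and $w_1$ by construction.
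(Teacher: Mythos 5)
Your proposal is correct and follows essentially the same route as the paper: for Example \ref{exam II} you use the one-parameter translation $p_0 = H(x)+t$ with a compensating $p_1(t)$ pinned down by the constraint at $w_0$, and rule out $\Delta\equiv 0$ via analyticity and the no-smaller-period hypothesis (your integration-over-a-period step is exactly the justification the paper compresses into ``$P$ has no smaller period, hence the expression is a non-constant analytic function of $t$''). For Example \ref{exam I} the paper simply declares the statement clear, and your localized bump construction is a valid way to fill in that detail.
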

\begin{proof}
The statement is clear if $(\cH, d_\cH, \iota_\cH)$ is in Example \ref{exam I}. 

Now we assume that $(\cH, d_\cH, \iota_\cH)$ is in Example \ref{exam II}. 
Let $t$ be a constant close to $0$ to be determined. We set 
\aryst
p_0 = H(x) + t, \ p_1 = H(g(x)) - c(x, w_0, t).
\earyst
Following the computations in Lemma \ref{lem arnoldfamilyismovable}, we see that 
\aryst
\iota_{\cH}(p_1) \iota_{\cH}(p_0) (w) = w + P(w) + P( w + P(w) + H(x) + t)  -  P(w_0)  -  P( w_0 + P(w_0) + H(x) + t).
\earyst
Then it is clear that $\iota_{\cH}(p_1) \iota_{\cH}(p_0) (w_0) = w_0$.
Since $w_0 \neq w_1$, we have $ w_1 + P(w_1) + H(x)   \neq w_0 + P(w_0) + H(x)$. Then since $P$ has no smaller period, $\iota_{\cH}(p_1) \iota_{\cH}(p_0) (w_1)$ is a non-constant real analytic function of $t$. Hence there exists $t$ arbitrarily close to $0$ such that  $\iota_{\cH}(p_1) \iota_{\cH}(p_0) (w_1) \neq w_1$. This concludes the proof.
\end{proof}

We first reduce Theorem \ref{Theorem 30} to the following proposition by a standard argument.

\begin{proposition}\label{PROPOSITION 36} 
For  any contractable $H \in C^0(X, \cH) \setminus \overline{ \mathcal{ML}(\cH) }$ and any $\epsilon > 0$, there exists a contractable $H' \in C^0(X, \cH) \setminus  \overline{ \mathcal{ML}(\cH) } $ such that 
$D_{\cal H}(H, H') < \epsilon$ and $\vert L_+(H')\vert, \vert L_-( H' )\vert < \epsilon$. 
 \end{proposition}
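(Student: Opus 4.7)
The plan is to iterate a single-step reduction: given a contractable $H \notin \overline{\mathcal{ML}(\cH)}$ with $L_+(H) \geq \delta > 0$, and any $\epsilon' > 0$, I would produce a contractable $H' \notin \overline{\mathcal{ML}(\cH)}$ with $D_\cH(H, H') < \epsilon'$ and $L_+(H') \leq L_+(H)/2$ (and symmetrically for $L_-$). Since the complement of $\overline{\mathcal{ML}(\cH)}$ and the set of contractable maps are both open (Lemma \ref{lem densityofcontractable}), iterating with exponentially shrinking perturbation sizes and stopping once $L_+$ drops below $\epsilon$ produces the required $H'$ within $D_\cH$-distance $\epsilon$ of $H$.

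For the single-step reduction I would combine three tools. First, non-mode-locking (via Lemma \ref{claim equivdefmodelock}) gives, for every small $\epsilon_0 > 0$, the gap $\rho(F_{-\epsilon_0}) < \rho(F) < \rho(F_{\epsilon_0})$; Corollary \ref{LEMMA 56} then supplies constants $\kappa_1 > 0$ and $N_1 \in \mathbb{N}$ so that, on any orbit piece of length $N > N_1$, the endpoint can be redirected inside a window of size $\approx N\kappa_1$ by a perturbation of size less than $2\epsilon_0$; the concrete redirection is Lemma \ref{lem cancellation}. Second, contractibility (Lemma \ref{lem arnoldfamilyismovable}) supplies, at any chosen fiber point $w$ on any orbit, a continuous $k$-step perturbation of arbitrarily small size that preserves the image of $w$ but compresses a $2r_0$-neighborhood of $w$ by a factor $e^{-\epsilon_2}$. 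Third, the stratification (Lemma \ref{LEMMA 58} and Section \ref{subsec dynstrat}) produces a compact $K \subset X$ that is $n$-good, $D$-spanning, $d$-mild, with $n \gg \max(k, N_1)$, so the first-return blocks $\{g^j(x): 0 \leq j < l(x)\}_{x \in K}$ are disjoint and cover $X$, and the nested strata $K^0 \subset \cdots \subset K^d$ permit a continuous patching of the perturbation across $\partial K$.

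Concretely, I would build $H'$ by inserting, on each return block of length $l(x) \in [n, D]$, roughly $l(x)/k$ disjoint $k$-step compression windows via Lemma \ref{lem arnoldfamilyismovable}, choosing the compression centers $w_j(x)$ continuously on each stratum $Z^i$ near a maximizer of $D(f^{l(x)})_x(\cdot)$. The cancellation Lemma \ref{lem cancellation} supplies the flexibility needed to make the endpoints of the perturbed orbits depend continuously on $x$ and to absorb the small orbit deflections produced by the compressions. After combining the compressions inside a single block, the derivative of $(f')^{l(x)}_x$ at a typical fiber orbit is reduced by a factor bounded away from $1$ exponentially in $l(x)$; summing over blocks tiling $X$ then yields a uniform reduction $L_+(H') \leq L_+(H)/2$. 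Openness of $C^0(X, \cH) \setminus \overline{\mathcal{ML}(\cH)}$ ensures $H'$ remains outside this closure provided the perturbation is small.

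The main obstacle will be \emph{coordination}: making all the choices continuous in $x$ across the stratification so that $H' \in C^0(X, \cH)$ (here Proposition \ref{prop-tietze} is the tool of choice), controlling the cumulative perturbation size across the $d$ strata (the stratification of Section \ref{subsec dynstrat} is designed precisely so this does not blow up), and --- most delicately --- verifying that the compression windows are actually traversed by enough fiber orbits to yield a \emph{global} (not merely pointwise) reduction of $L_+$. For this last point the key input is the normalization $\int_\T Df^n \, dw = 1$ appearing in the proof of Lemma \ref{LEMMA 65}, which forces most fiber orbits to concentrate near the maximizers of $D(f^n)_x(\cdot)$; placing compression windows near these maximizers therefore catches a positive fraction of orbits, independent of $l(x)$. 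This patching is the $g$-forced analog of the stratified perturbation scheme of \cite[Section~4]{ABD2}, which the paper explicitly flags as the model to follow.
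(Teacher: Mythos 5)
Your top-level scheme --- iterate a quantitative one-step reduction of the exponents with geometrically shrinking perturbation sizes, stopping once the exponents drop below $\epsilon$ --- is exactly how the paper deduces Proposition \ref{PROPOSITION 36} from Proposition \ref{proposition 37}, and that part is sound. (The paper only achieves a reduction factor $1-\lambda_0$ with $\lambda_0$ tiny and depending on the gap $L_+-L_-$, not your factor $1/2$; this does not matter for the iteration as long as the factor is uniform while $L_+-L_-\geq\epsilon$. Note also that by Lemma \ref{LEMMA 65} one has $L_-\leq 0\leq L_+$, so one should drive the \emph{gap} below $\epsilon$ and reduce $\max(-L_-,L_+)$ by a single perturbation, rather than handling $L_+$ and $L_-$ ``symmetrically'' by separate ones.)

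The genuine gap is in your mechanism for the one-step reduction. Since $L_\pm$ are defined by a supremum/infimum over \emph{all} $(x,w)\in X\times\T$, decreasing $L_+$ requires decreasing $\frac{1}{n}\log D(f^n)_x(w)$ for \emph{every} fiber point $w$, not for ``a positive fraction of orbits.'' The normalization $\int_\T D(f^n)_x\,dw=1$ only shows that the set of $w$ with large derivative has small measure; it does not show that every $w$ visits one of your compression windows, and the compression of Lemma \ref{lem arnoldfamilyismovable} acts only on an $r_0$-neighborhood of the chosen center, with no control outside it. A single fiber point whose orbit avoids all windows keeps the full exponent, and nothing in your construction prevents this. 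This is exactly the difficulty the paper isolates in the introduction (for $SL(2,\R)$-cocycles the stable/unstable matching automatically tames every other direction; here it does not), and it is resolved by Lemma \ref{lem mainpropertymovableset}, proved in Section \ref{sec Perturbation lemmata}: each circle fiber is split at a point $y'$ into two arcs (Lemma \ref{lemma 67}); one arc is funneled \emph{forward} into a shrinking neighborhood of a forward-contracting point supplied by Lemma \ref{lemma 68}, the complementary arc \emph{backward} into a neighborhood of a backward-contracting point, and the expansion points of Lemma \ref{lemma 69} together with the cancellation Lemma \ref{lem cancellation} reconnect the pieces. Only this two-sided splitting controls every $w\in\T$. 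A secondary misattribution: the stratification of Section \ref{secKZ} is used for the density of mode-locking in Section \ref{sec Density of mode-locking}, not for the exponent reduction, which is globalized over $X$ via a tower with return times $N$ and $N+1$ from \cite[Lemma 6]{AB} together with unique ergodicity.
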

 
 \begin{remark}
 If $ C^0(X, \cH)  = \overline{ \mathcal{ML}(\cH) }$, then the condition of Propostion \ref{PROPOSITION 36} is void. In this case, the conclusion of  Theorem \ref{Theorem 30} is already satisfied.
 \end{remark}

We can easily deduce Theorem \ref{Theorem 30} from Proposition \ref{PROPOSITION 36}.
\begin{proof}[Proof of Theorem \ref{Theorem 30}]  
Let us assume that $C^0(X, \cH) \neq  \overline{\mathcal{ML}(\cH)}$.
 For any $\epsilon > 0$, we denote
$$\mathcal{U}_\epsilon := \{ H \in C^0(X, \cH) \setminus \overline{ \mathcal{ML}(\cH) } \mid
\vert L_+( H )\vert, \vert L_-(H)\vert<\epsilon\}.$$ 
Given an arbitrary $\epsilon > 0$. By the upper- , resp. lower-, semicontinuity of $L_+$, resp. $L_-$, we see that $\mathcal{U}_\epsilon$ is open. By Proposition \ref{PROPOSITION 36} and Lemma \ref{lem densityofcontractable}, $\mathcal{U}_\epsilon$ 
is dense. Then the set $\mathcal{U}_0 := \cap_{n\geq 1} \mathcal{U}_{\frac{1}{n}}$
is a residual subset of $C^0(X, \cH) \setminus \overline{\mathcal{ML}(\cH)}$. By definition, every $H \in \mathcal{U}_0$ satisfies $L_+( H ) = L_- ( H  ) = 0$.
\end{proof}

We will deduce Proposition \ref{PROPOSITION 36} from the following slightly more technical proposition.
  
\begin{proposition}\label{proposition 37}
For any contractable $H \in  C^0(X, \cH) \setminus \overline{ \mathcal{ML}(\cH) }$ such that $L_+(H)>L_-(H)$, for any $\epsilon > 0$,  there exists a  contractable  $H' \in  C^0(X, \cH) \setminus \overline{ \mathcal{ML}(\cH) }$
such that $D_{\cal H}(H, H') < \epsilon$ and
$$ \max(-L_-( H' ), L_+( H' )) < \max(-L_-( H ), L_+( H ))(1-  10^{- 6}(\frac{ L_+( H ) - L_-( H )}{ \log\Vert \bPhi(H) \Vert_{C^{0,1}} + 3 })^2). $$
\end{proposition}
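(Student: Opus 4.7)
The plan is to adapt the Avila--Bochi reduction scheme from \cite{AB, B} to our nonlinear setting by combining the cancellation mechanism of Lemma \ref{lem cancellation} with the local contractability delivered by Lemma \ref{lem arnoldfamilyismovable}. The strategy is: locate, along a long orbit, two fiber points whose forward derivatives realise (up to $o(1)$) the extremal exponents $L_\pm(H)$; use the mode-unlocking hypothesis to reroute one through the other; and then insert localized contractions along the rerouted trajectory to reduce the supremum derivative. A symmetric manoeuvre in the past (or on a second independent orbit segment) bounds the infimum derivative.

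First I would set up the cancellation tool. Since $H \notin \overline{\cal{ML}(\cH)}$, a uniform version of Lemma \ref{claim equivdefmodelock} on a small $C^0$-neighbourhood of $H$ yields arbitrarily small $\epsilon_0 > 0$ with $\rho(\check F_{-\epsilon_0}) < \rho(\check F) < \rho(\check F_{\epsilon_0})$ for every $\check H \in \cB_\cH(H, \epsilon_0)$, where $\check F$ is the lift of $\bPhi(\check H)$ close to $F$. Corollary \ref{LEMMA 56} then provides constants $\kappa_1, N_1$, and Lemma \ref{lem cancellation} supplies the cancellation map $\Phi_N^{\check H}$ on $\Omega_N(F, \kappa_1)$ for each $N \geq N_1$. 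This map lets us surgically modify a length-$N$ fiber composition so as to land at any prescribed point of the linear envelope, at the cost of fiberwise $O(\epsilon_0)$ perturbations that agree with $\check H$ at the orbit's endpoints.

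Next I would locate the extremes. Fix a Birkhoff-regular $x \in X$ and choose $N$ large enough that $\frac{1}{N}\log\sup_w D(f^N)_x(w)$ and $\frac{1}{N}\log\inf_w D(f^N)_x(w)$ approximate $L_\pm(H)$ within a tolerance $\kappa_0$ that is small compared to $L_+(H) - L_-(H)$; Lemma \ref{LEMMA limitLE} and the identity $\int_\T D(f^N)_x(w)\,dw = 1$ guarantee this, and also force the realising points $w_\pm \in \T$ to be well-separated. The geometric heart of the argument is to apply $\Phi_N^{\check H}$ with base $x$, source $w_+$, and target close to $(F^N)_x(w_-)$: the resulting perturbed composition sends $w_+$ to approximately the image of $w_-$, thereby routing the steeply growing orbit through the mildly growing trajectory of $w_-$.

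With the rerouting in place, I would invoke Lemma \ref{lem arnoldfamilyismovable} at intermediate iterates $i$ where $P''((f^{i+1})_x(w)) \neq 0$ (available because $H$ is contractable) to insert strict local contractions of factor $e^{-\epsilon_2}$ on arcs $(w - r_0, w + r_0)$, while preserving the endpoint prescribed by $\Phi_N^{\check H}$. Composing order $N$ such localized contractions along the rerouted trajectory should reduce $D((\check f^N)_x)(w_+)$ by a factor whose logarithm scales linearly with $N$ and with the gap $L_+(H) - L_-(H)$, and inversely with $\log\|\bPhi(H)\|_{C^{0,1}} + 3$; after choosing $N$ and $r_0$ optimally, this is what produces the quantitative improvement $1 - 10^{-6}((L_+ - L_-)/(\log\|\bPhi(H)\|_{C^{0,1}} + 3))^2$ in the statement. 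The local construction is defined on finitely many disjoint orbit segments, so Proposition \ref{prop-tietze} promotes it to $H' \in \cB_\cH(H, \epsilon)$; openness ensures $H' \notin \overline{\cal{ML}(\cH)}$ provided $\epsilon$ is smaller than the distance from $H$ to the closed set $\overline{\cal{ML}(\cH)}$, and Lemma \ref{lem densityofcontractable} keeps $H'$ contractable. The hard part will be the third step: quantifying the combined effect of rerouting and localized contractions so that the pointwise gain on a single orbit survives the passage to the supremum and infimum over $X \times \T$ defining $L_\pm$. This is where the square of the gap ratio naturally originates -- one factor from the length of the effective cancellation window, one from the amplitude of the available local contraction -- and where the bulk of the technical bookkeeping will lie.
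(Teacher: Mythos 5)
Your proposal assembles the right toolbox (Lemma \ref{lem cancellation}, Lemma \ref{lem arnoldfamilyismovable}, Proposition \ref{prop-tietze}, the Avila--Bochi philosophy), but it leaves unresolved precisely the two points on which the paper's argument actually turns, and at least one of them is not a matter of bookkeeping but of a missing idea. First, your central manoeuvre --- reroute the steeply growing point $w_+$ through the trajectory of $w_-$ and then insert localized contractions along that single orbit --- controls the derivative only at $w_+$ (and on a small arc around it). The quantity $\mathbf{L}_+$ of the perturbed block is a supremum over the \emph{whole} fiber circle, and for a general $g$-forced circle diffeomorphism there is no analogue of the $SL(2,\R)$ phenomenon where matching stable/unstable directions automatically tames every other point. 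You flag this yourself as ``the hard part,'' but the resolution is not more careful bookkeeping of your construction: the paper replaces the reroute-one-point scheme by Lemma \ref{lemma 67}, which produces a point $y'$ splitting the fiber $[y,y+1)$ into two arcs, one sent by a forward perturbed block into an $\epsilon$-interval and the other sent by a \emph{backward} perturbed block into an $\epsilon$-interval; the contraction/expansion blocks of Lemmas \ref{lemma 68} and \ref{lemma 69} are then concatenated around these so that every $w\in\T$ falls into one of two cases, each with a derivative bound. Without some such two-sided splitting, the step from a pointwise gain to a bound on $\sup_w$ and $\inf_w$ fails.

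Second, the passage from blocks to $L_\pm(H')$ is not handled by ``finitely many disjoint orbit segments plus Tietze.'' Since $L_+(H')=\lim_n \mathbf{L}_+((H'(g^ix))_{i=0}^{n-1})$ must be controlled for \emph{every} $x\in X$, the perturbed blocks must tile almost all of every orbit. The paper achieves this with a Kakutani--Rokhlin-type set $B$ whose return times are exactly $N$ or $N+1$ (via \cite[Lemma 6]{AB}), with $\nu(\partial B)=0$, a finite partition of $B$ into small cells with representative points $w_{k,l}$, the block perturbation of Lemma \ref{lem mainpropertymovableset} applied at each representative, and unique ergodicity (inequality of Birkhoff-sum type, uniform in $x$) to show the exceptional set near $\partial B$ and the cell boundaries occupies an arbitrarily small fraction of every orbit. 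This tower structure is what lets the per-block factor $(1-\lambda_0)$ survive as $(1-\tfrac12\lambda_0)$ in the global exponent; your sketch, which perturbs only near a Birkhoff-regular point $x$, would leave $L_\pm$ unchanged, since these exponents are insensitive to modifications supported on a set of orbit-density zero. Structurally, the paper isolates all of your ``step three'' into a separate statement (Lemma \ref{lem mainpropertymovableset}: a uniform-in-$x$, finite-block version of the exponent drop) and makes Proposition \ref{proposition 37} purely the tower-and-gluing reduction; I would encourage you to adopt that separation, because as written your proof of the proposition depends on an unproved claim that is at least as hard as the proposition itself.
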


\begin{proof}[Proof of Proposition \ref{PROPOSITION 36} assuming Proposition \ref{proposition 37}] By Lemma \ref{LEMMA 65}, $L_-( H  ) \leq 0 \leq L_+(  H  )$. Without loss of generality, we can assume $L_+(  H  )-L_-(  H  ) \geq \epsilon$, for otherwise we can let $H' = H$.  Without loss of generality, let us assume that 
\aryst
\cB_{\cal H}(H, 2\epsilon) \subset C^0(X, \cH) \setminus \overline{ \mathcal{ML}(\cH) }.
\earyst
Denote
\aryst
L = \log (\Vert  \bPhi(H)  \Vert_{C^{0,1}} + 1) + 3.
\earyst

Define $H_0 = H$. Assume that for some integer $n\geq 0$, we have constructed some contractable $H_n \in C^0(X, \cH) \setminus \overline{ \mathcal{ML}(\cH) }$ so that $D_{\cal H}(H_n,  H)\leq (2^{-1}-  2^{-n-1})\epsilon$ and $L_+(  H_n  ) -  L_-(  H_n ) \geq\epsilon$. Without loss of generality, we may assume that $\epsilon$ is sufficiently small so that we have by Remark \ref{rem comparenorms} that 
\aryst
\| \Phi(H_n) \|_{C^{0,1}}  \leq  \| \Phi(H) \|_{C^{0,1}} + C D_{\cal H}(H_n,  H)   \leq   \| \Phi(H) \|_{C^{0,1}} + 1.
\earyst
Then by Proposition \ref{proposition 37}, we can find a contractable $H_{n+1}\in C^0(X, \cH) \setminus \overline{ \mathcal{ML}(\cH) }$ so that 
$D_{\cal H}(H_{n+1}, H_n)\leq 2^{-n-2}\epsilon$ and  
$$ \max(-L_-( H_{n+1}), L_+( H_{n+1})) < \max(-L_-( H_n), L_+( H_n))(1- 10^{-6}(\frac{\epsilon}{ L})^2 ).$$
Notice that we have $D_{\cal H}(H_{n+1}, H)\leq (2^{-1}-  2^{-n-1})\epsilon + 2^{-n-2}\epsilon \leq (2^{-1}-  2^{-n-2})\epsilon$.
Then for some integer $m > 0$, we would have $D_{\cal H}(H_m,  H) < \epsilon$ and 
$L_+(H_m) -  L_-(H_m) < \epsilon$. We let $H' =  H_m$ and this concludes the proof.
\end{proof}

The rest of this section is dedicated to the proof of Proposition \ref{proposition 37}. 


%
%

We have the following important lemma.

\begin{lemma}\label{lem mainpropertymovableset}
For any contractable $H \in  C^0(X, \cH) \setminus \overline{ \mathcal{ML}(\cH) } $ such that $L_+( H ) >L_-( H )$, for any $\epsilon  > 0$, there exists $N_4 =  N_4(H,\epsilon) > 0$ such that the following is true. For any $x\in X$, any integer $N \geq  N_4$, there exists 
$(p_0,\cdots, p_{N-1})\in \cH^N$ such that
\begin{enumerate}
\item $d_{\cH}(p_i, H(g^i(x)))<\epsilon$ for every $0\leq i\leq N-1$,
\item  we have 
\aryst
\max(-{\bf L}_-, {\bf L}_+)((p_i)_{i=0}^{N-1})  <  (1-\lambda_0)\max (-L_-(H),L_+(H))
\earyst
where
\ary \label{eq deflambda0}
\lambda_0 = 10^{- 5}(\frac{L_+( H )-L_-( H )}{   \log \|  \bPhi(H) \|_{C^{0,1}}  + 3 } )^2.
\eary
\end{enumerate}
\end{lemma}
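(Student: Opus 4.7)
The plan is to exploit the non-mode-locking of $H$ via Lemma \ref{lem cancellation} together with its contractability via Lemma \ref{lem arnoldfamilyismovable} to produce, along the finite orbit piece $(H(g^i(x)))_{i=0}^{N-1}$, a composition $\iota_\cH(p_{N-1})\cdots\iota_\cH(p_0)$ whose fiberwise derivative is uniformly smaller than that of the unperturbed $\iota_\cH(H(g^{N-1}x))\cdots\iota_\cH(H(x))$. Setting $L = \max(-L_-(H), L_+(H))$ and $\Delta = L_+(H)-L_-(H)>0$, I would first invoke Lemma \ref{LEMMA limitLE} to choose $N_4$ so large that the finite-time quantities ${\bf L}_\pm((H(g^i(y)))_{i=0}^{N-1})$ lie within $\Delta/100$ of $L_\pm(H)$ for every $y\in X$ and $N\geq N_4$. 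Since $H \notin \overline{\mathcal{ML}(\cH)}$, Lemma \ref{claim equivdefmodelock} combined with Corollary \ref{LEMMA 56} gives $\epsilon_0\in(0,\epsilon/2)$ and $\kappa_1>0$ so that the cone $\Omega_N(F,\kappa_1)$ has width of order $N\kappa_1$; Lemma \ref{lem cancellation} then lets me realize any target in this cone as $(\check F^N)_x(y)$ through a $2\epsilon$-perturbation $(p_0,\ldots,p_{N-1})$ of $(H(g^i(x)))_{i=0}^{N-1}$.

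The core of the argument, as signaled in the paper's Idea of the Proof, is to split the circle fiber $\T$ into two arcs $\T = I_{\mathrm{fw}} \sqcup I_{\mathrm{bw}}$ chosen using a point $w^\star$ realizing $D(f^N)_x(w^\star) \approx e^{NL_+}$ together with a ``dual'' point $w^{\star\star}$ with $D(f^N)_x(w^{\star\star}) \approx e^{NL_-}$, whose existence is forced by $\int_\T D(f^N)_x(w)\,dw = 1$. On $I_{\mathrm{fw}}$ I would engineer cancellation in the \emph{future} by applying Lemma \ref{lem cancellation} to redirect the forward image by a controlled amount of order $N\kappa_1$, forcing a definite logarithmic drop in the derivative product along that arc; on $I_{\mathrm{bw}}$ I would symmetrically apply the argument to the inverse map, engineering cancellation in the \emph{past}. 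Within each arc a further dichotomy is needed depending on whether $L_+$ or $-L_-$ attains $L$: when expansion dominates, one directly redirects a fast orbit into a slower region via Lemma \ref{lem cancellation}; when contraction dominates, one instead invokes Lemma \ref{lem arnoldfamilyismovable} to insert a definite $e^{-\epsilon_2}$ contraction factor on a neighborhood of a chosen orbit point, crucially using the contractability of $H$. Each individual mechanism yields a linear-in-$\Delta$ gain, but since it applies only to one arc at a time, the aggregate uniform improvement is quadratic in $\Delta/L$, matching the factor appearing in \eqref{eq deflambda0}.

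The hardest step will be the combinatorial coordination: a single choice $(p_0,\ldots,p_{N-1})$ must simultaneously realize the forward cancellation on $I_{\mathrm{fw}}$ and the backward cancellation on $I_{\mathrm{bw}}$. I would arrange this by placing the forward-cancellation adjustments near the beginning of $[0,N-1]$ and the backward-cancellation adjustments near the end, using the positive width $\kappa_1$ to leave enough room for both sides and relying on item (3) of Lemma \ref{lem arnoldfamilyismovable} (which only deforms a small neighborhood over a fixed length $k$ of steps) to guarantee that the local perturbations do not interfere with each other. A final check verifies that each $p_i$ remains in the $\epsilon$-ball around $H(g^i(x))$ and that the resulting derivative bound is uniform over all $w \in \T$, giving the reduction by a factor $(1-\lambda_0)$ claimed in the lemma.
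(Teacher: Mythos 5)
Your high-level plan matches the strategy the paper announces (split the fiber into two arcs, arrange cancellation in the future on one arc and in the past on the other, use contractability to insert contraction, use Lemma \ref{lem cancellation} to redirect orbits), but the proposal is missing the mechanisms that actually make this work, and at the points where it commits to specifics it diverges from anything that would close the argument. First, your splitting of $\T$ into $I_{\mathrm{fw}}\sqcup I_{\mathrm{bw}}$ via points $w^\star$, $w^{\star\star}$ of extreme derivative gives no control over the arcs themselves: knowing $D(f^N)_x$ at one point says nothing about the rest of the arc, which is exactly the difficulty the paper flags relative to the $SL(2,\R)$ case. In the paper the splitting point $y'_0$ is produced by Lemma \ref{lemma 67}: one iterates the parabolic-type perturbation of Lemma \ref{lem getparabolicelement} to show that every point of $(y,y+1)$ is eventually driven into an $\epsilon_2$-neighborhood of the reference orbit from one side or the other (Claim \ref{claim soitcasoitca}), and then a connectedness argument ($U_-\cup U_+=(y,y+1)$, both open, nonempty, hence intersecting) yields a single $y'$ so that \emph{one entire arc} is steered forward onto a prescribed target and the \emph{complementary arc} is steered backward onto another. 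Nothing in your proposal produces such a $y'$, and Lemma \ref{lem cancellation} alone cannot: it only matches the image of a single point and does not by itself "force a definite logarithmic drop in the derivative product along that arc," as you claim.

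Second, the statement requires bounding $\max(-\mathbf{L}_-,\mathbf{L}_+)$, i.e.\ both an upper \emph{and a lower} bound on the derivative of the perturbed composition, uniformly on each arc. Your proposal addresses only the upper bound. The paper handles both via a three-block structure on each half of the time interval: a contraction block of length $m_1$ at the special point $\bar y_1$ furnished by Lemma \ref{lemma 68} (itself chosen by a Markov-inequality argument from $\int_\T D(f^n)_x\,dw=1$), a redirection block of length $m_2$ from Lemma \ref{lem cancellation}, and a final block of length $m_3$ running the unperturbed dynamics near a point where $D(f^{m_3})=1$ (Lemma \ref{lemma 69}), with the case analysis "(Upper bound)/(Lower bound)" ensuring that if the contraction block shrinks the arc a lot, the image lands in the tiny neighborhood where Lemma \ref{lemma 69} prevents further collapse. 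The quadratic exponent in $\lambda_0$ comes from the time-budget allocation \eqref{8.6.11}--\eqref{8.6.12} between $m_1$ and $m_3$ (the gainful block occupies a fraction $\sim \Delta/\log D$ of the time and yields a relative gain $\sim \Delta/\log D$ per unit), not from your heuristic that "each mechanism applies to only one arc at a time"; indeed the two arcs are treated over \emph{disjoint} halves of $[0,N-1]$ (forward treatment on the second half, backward on the first, the opposite of your placement), and each arc receives the full improvement over its half. As written, the proposal would not produce the lower bound on $\mathbf{L}_-$ nor the stated value of $\lambda_0$.
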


The proof of Lemma \ref{lem mainpropertymovableset} is technical and will be deferred to  Section \ref{sec Perturbation lemmata}.

 We are now ready to state the proof of  Proposition \ref{proposition 37}.
\begin{proof}[Proof of Proposition \ref{proposition 37}] 
Let us fix some contractable $H \in  C^0(X, \cH) \setminus \overline{ \mathcal{ML}(\cH) }$ such that $L_+( H )>L_-( H) $. Assume to the contrary that the conclusion of Proposition \ref{proposition 37} is false for $H$.

By Lemma \ref{LEMMA limitLE}, we set 
\aryst 
N'_0 = N'_0(H, 1),
\earyst
then for any $x \in X$, for any $n > N'_0$, we have that 
\aryst
 {\bf L}_+( (H(g^{i}(x)))_{i=0}^{n-1} ) \leq  L_+(H) + 1, \  {\bf L}_-( (H(g^{i}(x)))_{i=0}^{n-1}  ) \geq  L_-(H) - 1.
\earyst

We set
\aryst
N_4 =  N_4(H, \frac{1}{4}\epsilon), 
\earyst
where the function $N_4$ is given by Lemma \ref{lem mainpropertymovableset}.
We fix an arbitrary integer
\ary
 N >  N_4 + N'_0.  \label{8.6.25} 
 \eary
Denote by $\nu$ the unique $g$-invariant measure. Under the hypothesis of $g$, we can choose a subset $B\subset X$ by \cite[Lemma 6]{AB} such that the return time from $B$ to itself via $g$ equals to either $N$ or $N + 1$, and $\nu(\partial B) = 0$. We fix such $B$ from now on.

By reducing the size of $\epsilon$ if necessary, we may assume that for any $H' \in \cB_{\cH}(H, \epsilon)$, we have, for $n \in \{ N, N+1 \}$, that
\ary \label{eq L+L-bL+bL-} 
 {\bf L}_+( (H'(g^{i}(x)))_{i=0}^{n-1} ) \leq  L_+(H) + 2, \  {\bf L}_-( (H'(g^{i}(x)))_{i=0}^{n-1}  ) \geq  L_-(H) -  2. 
\eary
Let $\delta > 0$ be small constant such that
$$\sup_{x\in X}\sup_{x'\in B(x,\delta), 0\leq i\leq N} d_{\cal H}(H(g^i(x')), H(g^i(x)) )<\frac{1}{2}\epsilon. $$
Cover the closure of $B$ by open sets $W_1,\cdots,W_{k_1}$ with diameters less than $\delta/2$. By  \cite[Lemma 3]{AB}, we can choose $W_i$ so that $\nu(\partial W_i) = 0$ for all $1\leq  i \leq k_1$. Let $U_i = W_i \setminus \cup_{j<i} W_j$. After discarding those $U_k$ which are disjoint from $B$, and rearranging the indexes, we can assume that for some integer $k_0 > 0$, for each $1\leq k\leq k_0$, $B\cap U_k\not=\emptyset$; and the union of $U_k$ over $1\leq k\leq k_0$ covers $B$. 

By our choice, we have $B=B_N\cup B_{N+1}$ where $B_l$ denotes the set of points in $B$ whose first return time to $B$ equals to $l$. Let
$$ V_0 = \bigcup^{N+1}_{l=N}\bigcup^{l-1}_{i=0}\partial(B_l\cap U_i).$$
Then $\nu(V_0 )=0.$

Denote by $\Gamma$ the set of $(k,l)$ such that $1\leq  k\leq k_0, l\in\{N,N+1\}$ and $B_l \cap U_k\not=\emptyset$. For each $(k,l)\in\Gamma$, we choose a point $w_{k,l}\in B_l\cap U_k \setminus V_0$.

Fix an arbitrary $(k, l) \in \Gamma$. Note that $l \geq N > N_4$. By Lemma \ref{lem mainpropertymovableset} for $(\epsilon/2, H, w_{k, l}, l)$ in place of $(\epsilon, H, x, N)$, we obtain  ${\bf p} = (p^{k,l}_{0},\cdots, p^{k,l}_{l-1})\in \cal H^l$ such that
\enmt
\item we have $d_{\cal H}(p^{k,l}_{i},H(g^i(w_{k,l})))<\frac{1}{2}\epsilon$ for any $0\leq  i\leq l-1$,
\item we have
\ary \label{eq upperboundLE}
\max(-\bL_-({\bf p}), \bL_+({\bf p})) < (1 - \lambda_0) \max(-L_-(H),L_+(H)). 
\eary
\eenmt

 Now let $\eta  > 0$ be a sufficiently small constant to be determined later. By the unique ergodicity of $g$, there exist an open set $V\supset V_0 $ and an integer $n_0 >0$ such that
  \begin{equation}
\frac{1}{n}\left| \{ j\mid 0\leq j\leq n-1, g^j(x)\in V \} \right|<\frac{\eta}{N+1}, \forall x \in X, \forall n\geq n_0.
 \label{8.6.28} 
 \end{equation}
 Moreover, we can assume that $w_{k,l}\not\in V$ for any $(k, l) \in \Gamma$.
 
We define a map $H' \in C^0(X, {\cal H})$ by using Proposition \ref{prop-tietze} so that
$D_{\cal H}(H, H') < \epsilon$, and 
 for any $(k,l)\in\Gamma$, any $0\leq i\leq l-1$, we have $H'(g^i(x)) = p^{k,l}_{i}$ for all $x \in B_l\cap U_k \setminus V$.

 By letting $\eta > 0$ be sufficiently small depending only on $H, \epsilon$  (this can be realized by choosing $V$ of sufficiently small measure, and by letting $n_0$ be sufficiently big),  we can ensure  by \eqref{eq L+L-bL+bL-}, \eqref{eq upperboundLE} and \eqref{8.6.28} that
 \aryst
 L_+(H') &\leq& \max(-L_-(H),L_+(H))(1- \lambda_0) (1 - \eta \frac{N+1}{N})  \\
 && + (L_+(H) + 2) \eta \frac{N+1}{N} \\
 &\leq& \max(-L_-(H),L_+(H))(1-\frac{1}{2}\lambda_0).
 \earyst
 By a similar argument, we obtain an analogous bound for $ - L_-(H')$.
 Consequently, we have
 $$ \max(-L_-(H'),L_+(H')) < \max(-L_-(H),L_+(H))(1-\frac{1}{2}\lambda_0).$$
We see that $H'$ satisfies the conclusion of Proposition \ref{proposition 37}. 
\end{proof}

\section{Negative Lyapunov exponent} \label{sec Negative Lyapunov exponent}
 Recall that for  a lift $F$  of a $g$-forced circle homeomorphism $f$, for every integer $N > 0$ and every $\kappa > 0$ we have defined  $\Omega_N(F, \kappa)$ in \eqref{eq omegaNFkappa}.

 Given a contractable $H \in C^0(X, {\cal H})$, we let $\epsilon > 0$ be a sufficiently small constant, and we let the integer $k > 0$ and the continuous map $E : [0, 1] \times \T \times X \times \cB_{\cal H}(H, \epsilon) \to \cH^k$ be given by Lemma \ref{lem arnoldfamilyismovable}.

\begin{proposition} \label{prop 38} 
Given a contractable $H \in  C^0(X, \cH) \setminus \overline{\mathcal{ML}(\cH)}$ such that   
$L_+(H) = L_-(H) = 0$, and let $F$ be a lift of $\Phi(H)$.
 For any $\epsilon > 0$, there exist $\kappa_3 = \kappa_3(H,\epsilon)\in (0, \frac{1}{2}), N_5 = N_5(H,\epsilon)>0$ such that for any integer $N\geq N_5$ there exists  $r_3 = r_3(H, N, \epsilon)>0$  such that for any $\check{H} \in \cB_{\cH}(H,  \kappa_3)$, for any $\bar{r}\in (0,r_3)$, there exists 
a continuous function $\Psi_N:  \Omega_N(F,\kappa_3)  \rightarrow  \cal H^N$ such that the following is true:
 For any $(x, y, z) \in \Omega_N(F, \kappa_3)$, let $\Psi_N(x, y, z) = (p_0, \cdots, p_{N-1})$, and let $P_i$ be the unique lift of $\iota_{\cH}(p_i)$ that is close to $F_{g^{i}(x)}$, then we have
\begin{enumerate}
\item $d_{\cH}(p_i,\check{H}(g^i(x)) ) < 2\epsilon$ for every $0\leq i\leq N-1$, 
\item $P_{N-1} \circ \cdots \circ P_0([y-\bar{r}, y+\bar{r}])\subset [z-\frac{1}{10}\bar{r}, z+\frac{1}{10}\bar{r}],$
\item if 
$(\check{F}^{N-1})_{x}(y) = z$ and $(\check{F}^{N-1})_{x}([y-\bar{r}, y+\bar{r}])\subset  [z-\frac{1}{10}\bar{r}, z+\frac{1}{10}\bar{r}]$, where $\check{F}$ is the unique lift of $\Phi(\check{H})$ that is close to $F$, then $p_i=\check{H}(g^i(x))$ for every $0\leq i\leq N-1$.
\end{enumerate}
\end{proposition}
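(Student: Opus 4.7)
The proof plan is to combine the endpoint-matching perturbation of Lemma \ref{lem cancellation} with the local contracting perturbations of Lemma \ref{lem arnoldfamilyismovable}. The two are compatible: Lemma \ref{lem arnoldfamilyismovable}(2) ensures each $k$-step insertion preserves the image of its chosen center point $w$, while Lemma \ref{lem cancellation} routes the orbit of $y$ to the target $z$. Since $L_+(H)=L_-(H)=0$, by semi-continuity we can arrange $\log\|D(\check f^n)_x\|_\infty\leq n\eta$ uniformly for any prescribed $\eta>0$, provided $\kappa_3$ is small and $N\geq N_5$; this subexponential control is what lets a fixed positive density of contracting insertions with per-block rate $e^{-\epsilon_2}$ overwhelm the net expansion along the orbit.

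Concretely, I would first invoke Lemma \ref{lem cancellation} on $(\check H,x,y,z)$ to obtain $(q_0,\dots,q_{N-1})=\Phi^{\check H}_N(x,y,z)$, defining the intermediate orbit $y_0=y,y_1,\dots,y_N=z$ under the corresponding cocycle. Using Proposition \ref{prop-tietze} I would extend this tuple to a continuous map $\check H'$ close to $H$ that agrees with $q_i$ at $g^i(x)$, so that $\check H'$ lies in the domain of Lemma \ref{lem arnoldfamilyismovable}. Then I select $M=\lfloor N/k\rfloor$ disjoint $k$-blocks $[a_j,a_j+k-1]\subset\{0,\dots,N-1\}$ and, on each block, replace $(q_{a_j},\dots,q_{a_j+k-1})$ by $E(\sigma,y_{a_j},g^{a_j}(x),\check H')$ for a parameter $\sigma=\sigma(x,y,z,\bar r)\in[0,1]$ to be specified. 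By Lemma \ref{lem arnoldfamilyismovable}(2), each insertion fixes $y_{a_j}\mapsto y_{a_j+k}$, so the composition $P_{N-1}\cdots P_0$ still sends $y$ to $z$; by (3), each contracts a neighborhood of $y_{a_j}$ of radius $r_0$ by factor $e^{-\epsilon_2}$. Choosing $\bar r<r_3$ small enough that $\bar r\cdot e^{N\eta}<r_0$, the iterated image intervals stay inside those neighborhoods throughout, and the derivative at $y$ is bounded by $e^{-M\epsilon_2+N\eta}<1/10$ once $\eta<\epsilon_2/(2k)$ and $N$ is large; this yields condition (2).

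For the identity-when-already-good condition (3) and for continuity, I would parameterize $\sigma$ by a continuous deficit function $\sigma(x,y,z,\bar r)\in[0,1]$ that vanishes exactly when both $z=(\check F^N)_x(y)$ and $(\check F^N)_x([y-\bar r,y+\bar r])\subset[z-\bar r/10,z+\bar r/10]$. When both hypotheses hold, Lemma \ref{lem cancellation}(3) forces $q_i=\check H(g^i(x))$, and $\sigma=0$ forces the $E$-output to equal the unperturbed tuple by Lemma \ref{lem arnoldfamilyismovable}(1); hence $p_i=\check H(g^i(x))$ for all $i$. Continuity of $\Psi_N$ then follows from the joint continuity of $\Phi^{\check H}_N$, $E$, and the Tietze extension. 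The main obstacle I anticipate is the precise calibration of these two perturbations so that their composite deviation from $\check H(g^i(x))$ remains below $2\epsilon$, that the Tietze-extended $\check H'$ depends continuously on $(x,y,z)$ without global ambiguity, and that the degenerate case of (3) is recovered exactly rather than merely approximately; in particular, ensuring that the interpolating $\sigma$ produces just enough contraction while the deficit is small but still recovers the full factor $e^{-M\epsilon_2}$ once the deficit crosses a threshold.
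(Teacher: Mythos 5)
Your overall strategy is the right one and uses the same two ingredients as the paper (Lemma \ref{lem cancellation} for endpoint matching, Lemma \ref{lem arnoldfamilyismovable} for the contracting insertions, with $L_\pm=0$ supplying the subexponential control that lets the per-block factor $e^{-\epsilon_2}$ win). The arrangement differs: the paper does \emph{not} interleave the two perturbations. It reserves an initial segment of length $\bar N k$ with $\bar N=\lceil \frac{1}{k}(1-\frac{\epsilon_2}{10k\log(\|f\|_{C^{0,1}}+1)})N\rceil$ for consecutive $E$-blocks centered at the \emph{unperturbed} orbit points $(\check f^{ik})_x(y)$, and only then applies $\Phi^{\check H}_{N-\bar Nk}$ on the short tail to reach $z$; the tail's possible expansion $(\|f\|_{C^{0,1}}+1)^{N-\bar Nk}$ is absorbed by the contraction $e^{-\bar N\epsilon_2/2}$ accumulated on the first segment. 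This avoids two calibration problems your ordering creates: (i) your centers $y_{a_j}$ and block data come from the tuple $\Phi^{\check H}_N(x,y,z)$, which is only within $2\epsilon$ of $\check H$, so it need not lie in the domain $\cB_{\cH}(H,\epsilon)$ required by Lemma \ref{lem arnoldfamilyismovable}, and the composite deviation budget becomes $3\epsilon$ rather than $2\epsilon$; (ii) the non-canonical Tietze extension $\check H'$ is unnecessary and threatens continuity in $(x,y,z)$. These are fixable by reparametrizing constants, but they are not addressed.

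The genuine gap is the selection of $\sigma$. A ``continuous deficit function that vanishes exactly when the hypotheses of (3) hold'' does not do the job: for points where the deficit is small but nonzero, your $\sigma$ is small, the $E$-blocks contract only weakly (the rate $e^{-\epsilon_2}$ is guaranteed only at $\sigma=\sigma_0$), and conclusion (2) — which must hold for \emph{every} $(x,y,z)\in\Omega_N(F,\kappa_3)$ — can fail. You flag this tension yourself but do not resolve it, and any resolution by a ``threshold'' would destroy continuity. The paper's device is to define
$\sigma_1=\inf\{\sigma\in[0,\sigma_0]\mid U_{N-1}\cdots U_{\bar Nk}V^{\sigma}_{\bar Nk-1}\cdots V^{\sigma}_0([y-\bar r,y+\bar r])\subset[z-\tfrac{1}{10}\bar r,z+\tfrac{1}{10}\bar r]\}$,
i.e.\ the first $\sigma$ for which conclusion (2) already holds. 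Nonemptiness of this set is exactly Claim \ref{claim 2} (the case $\sigma=\sigma_0$), the inclusion persists at $\sigma_1$ and for all larger $\sigma$ by the monotone compact nesting $\Subset$ of Lemma \ref{lem arnoldfamilyismovable}(3), that same \emph{strict} nesting gives continuity of $\sigma_1$ in $(x,y,z)$, and in the already-good case of (3) one gets $\sigma_1=0$, whence Lemma \ref{lem cancellation}(3) and Lemma \ref{lem arnoldfamilyismovable}(1) return $p_i=\check H(g^i(x))$. Without this infimum construction (or an equivalent one) your proof does not simultaneously deliver (2), (3), and continuity of $\Psi_N$.
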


\begin{proof}

Let us denote for simplicity $f = \bPhi(H)$ and then $F$ is a lift of $f$.
Without loss of generality, we assume $\epsilon\in (0,1)$ is sufficiently small to apply   Lemma \ref{lem arnoldfamilyismovable}.

We fix a small constant  $\sigma_0 = \sigma_0(f, \epsilon)  > 0$ such that for every $\check{H} \in \cB_{\cH}(H, \epsilon)$,  $x \in X$ and $w \in \T$ we have
\ary \label{eq Ehasnarrowrange}
E(\sigma_0, w, x, \check{H}) \in \prod_{i=0}^{k-1} \cB_{\cal H}( \check{H}(g^{i}(x)), \epsilon),
\eary
where the map $E$ is given by  Lemma \ref{lem arnoldfamilyismovable}.
We let $r_0, \epsilon_2$ be given by Lemma \ref{lem arnoldfamilyismovable}(3) for $\sigma_0$.

%

Since $L_+(f) = L_-(f) = 0$, there exists $N' =N'(H,\epsilon)>0 $ such that
\aryst
 \sup_{x\in X, w \in \mathbb{T}} |\log D(f^n)_x(w)|< \frac{n\epsilon_2}{4k}, \  \forall n>N'.
 \earyst
Then we choose $\kappa' = \kappa'(H,\epsilon) > 0$ to be sufficiently small so that for any $H' \in \cB_{\cH}(H, \kappa')$, by denoting $f' = \bPhi(H')$, we have 
 \begin{equation}
 \sup_{x\in X, w \in \mathbb{T}} |\log D(f')^n_x(w)|< \frac{n\epsilon_2}{2k},  \ \forall n>N'.
\label{8.7.2} 
 \end{equation}

We let $\epsilon_0 = \epsilon_0(H, \epsilon) \in (0, \epsilon)$ be given by Lemma \ref{lem cancellation}. 
We let $\kappa'' = \kappa_1(H,\epsilon_0)$ be given by Corollary \ref{LEMMA 56}. By Lemma \ref{LEMMA 55}, there exists some $N'' = N''(H,\epsilon) > 0$ such that for any $n > N''$, we have
 \begin{equation}
 \sup_{x\in X, y\in\mathbb{R}}|(F^n)_x(y)-y-n\rho(F)|<
 \frac{\frac{1}{60}\epsilon_2}{k( \log(\Vert f\Vert_{C^{0,1}}+1)+1) }n\kappa''. \label{ (8.7.3)} 
 \end{equation}
We choose $\kappa''' > 0$ to be sufficiently small, depending only on $H, \kappa''$ and $\epsilon_2$, so that for any $H' \in \cB_{\cal H}(H,  \kappa''')$, for any $n > N''$, we have 
 \begin{equation}
 \sup_{x\in X, y\in\mathbb{R}}|(F')^n_x(y)- (F^n)_x(y) |<
 \frac{\frac{1}{20}\epsilon_2}{k(\log(\Vert f\Vert_{C^{0,1}}+1)+1)}n\kappa'' \label{8.7.4} 
 \end{equation}
 where $F'$ denotes the lift of $f' = \bPhi(H')$ that is close to $F$.
 
 We define
 \begin{align} 
 \kappa_3& = \frac{1}{2}\min(\epsilon_0, \kappa',\kappa''', \frac{\frac{1}{100}\epsilon_2}{k ( \log(\Vert f\Vert_{C^{0,1}}+1)+1 )}\kappa''),\label{8.7.5} \\
 N_5 &= 2N' + 2N'' + 100 \frac{k}{\epsilon_2}(\log(\Vert f\Vert_{C^{0,1}}+1) + 1)N_1(H,\epsilon_0) + 100 \frac{k}{\epsilon_2}.  \label{8.7.6}
 \end{align}
 
 Let $N > N_5$ and $(x, y, z)\in  \Omega_N(F, \kappa_3)$. We define
 \ary
 \label{(8.7.7)}  \bar{N} &=& \lceil \frac{1}{k}( 1 - \frac{\epsilon_2}{10 k (\log(\Vert f\Vert_{C^{0,1}} + 1))}  )N  \rceil,  \\
r_3 &=& (\Vert f\Vert_{C^{0,1}}+1)^{-N} r_0.
\label{8.7.10}
\eary
 Let $\check{H}$ be as in the proposition, and let $\check{F}$ be the unique lift of $\check{f} = \bPhi(\check{H})$ that is close to $F$. For any $\sigma\in  [0, \sigma_0]$, we define
\aryst
( v^\sigma_{ik + j})_{j=0}^{k-1} = E( \sigma, ( \check{f}^{ik})_x(y\mod 1), g^{ik}(x), \check{H} ),  \ \forall 0 \leq i \leq \bar{N}-1.
\label{8.7.8}
\earyst
By Lemma \ref{lem arnoldfamilyismovable} (2), (3) and \eqref{eq Ehasnarrowrange}, for every $\sigma\in [0, \sigma_0]$ we have that
\ary \label{eq vsigmalclose}
d_{\cH}(v^{\sigma}_l, \check{H}(g^{l}(x))) < \epsilon, \ \forall 0 \leq l \leq \bar{N}k-1
\eary
and
\ary
V^\sigma_{ik-1}\cdots V^\sigma_0(y)= (\check{F}^{ik})_x(y), \ \forall 1\leq i\leq \bar{N}  \label{8.7.9}
\eary
where $V^\sigma_j$ is the unique lift of $\iota_{\cH}(v^\sigma_j)$ that is close to $\check{F}_{g^j(x)}$. 

We have the following.
\begin{claim}\label{claim 1}
For any $0\leq i\leq \bar{N} - 1$, we have
 \aryst
 D(V^{\sigma_0}_{(i+1)k-1}\cdots V^{\sigma_0}_{ik})(y') < e^{- \epsilon_2} D((\check{F}^k)_x)( (\check{F}^{ik})_x(y)), \forall y' \in ( (\check{F}^{ik})_x(y) -  r_0,  (\check{F}^{ik})_x(y) +  r_0),
 \earyst
 and for any $r''\in (0,  r_0 )$, and any $0\leq \sigma_1 <\sigma_2 \leq\sigma_0$, we have
\aryst
V^{\sigma_2}_{(i+1)k-1}\cdots V^{\sigma_2}_{ik}( (\check{F}^{ik})_x(y)+[-r'',r''])\Subset V^{\sigma_1}_{(i+1)k-1}\cdots V^{\sigma_1}_{ik}( (\check{F}^{ik})_x(y)+(-r'',r'')).
 \earyst
In particular, for any $\bar{r}\in(0, r_3)$, we have
\aryst
&&V^{\sigma_2}_{\bar{N} k -1}\cdots V_0^{\sigma_2}([y-\bar{r},y+ \bar{r}])
\Subset  V^{\sigma_1}_{\bar{N}k - 1}\cdots V_0^{\sigma_1}([y-\bar{r}, y+\bar{r}]). \nonumber
\earyst
\end{claim}
\begin{proof} 
The inequality and the first inclusion follow immediately from Lemma \ref{lem arnoldfamilyismovable}(3). The last statement follows from \eqref{8.7.10} by repeatedly applying the first statement.
\end{proof}

By (\ref{8.7.4}), $(x, y, z)\in \Omega_N(F, \kappa_3)$ and (\ref{8.7.5}), we have
\[
\begin{array}{ccc}
|  (\check{F}^N)_x(y)-z| &\leq & | ( F^N)_x(y)-z| + | (\check{F}^N)_x(y) - (F^N)_x(y)|\\
&\leq & N(  \kappa_3 + \frac{\frac{1}{20}\epsilon_2}{k (\log(\Vert f\Vert_{C^{0,1}}+1)+1)}\kappa'')<(N-\bar{N} k)\kappa''.
\end{array}
\]
Hence we have $(g^{\bar{N} k}(x), (\check{F}^{\bar{N} k})_x(y),z) \in \Omega_{N-\bar{N} k}(F, \kappa'')$.
Moreover, by (\ref{8.7.6}) and $N>N_5$, we have $N-\bar{N}k >N_1(f,\epsilon_0)$. Then we can apply  Lemma \ref{lem cancellation} to define
$$ (u_{\bar{N}  k },\cdots, u_{N-1}) =  \Phi^{\check{H}}_{N-\bar{N} k}(g^{\bar{N} k}(x), (\check{F}^{\bar{N} k})_x(y),z).$$ 
We have 
\ary \label{eq gicheckficlose}
d_{\cal H}(u_i, \check{H}(g^i(x))) < 2\epsilon, \ \forall \bar{N} k \leq i\leq N-1.
\eary
For every $\bar{N} k \leq i\leq N-1$, let us denote  by $U_i$ the unique lift of $\iota_\cH(u_i)$ that is close to $\check{F}_{g^{i}(x)}$.

\begin{claim}\label{claim 2}
 For any $\bar{r}\in(0, r_3]$, we have
$$U_{N-1}\cdots U_{\bar{N} k}V^{\sigma_0}_{\bar{N} k - 1}\cdots V_0^{\sigma_0}([y-\bar{r},y+ \bar{r}])\subset [z-\frac{1}{10}\bar{r}, z+\frac{1}{10}\bar{r}].$$
\end{claim}
\begin{proof}
By \eqref{8.7.2} and $\kappa_3 < \kappa'$, we see that
\ary \label{eq concatenate1steqinClaim52}
 \sup_{x \in X, y \in \R} |\log D (\check{F}^n)_x(y)| < \frac{n\epsilon_2}{2k}, \ \forall n > N'.
\eary
By \eqref{8.7.10}, $\bar r \leq r_3$, \eqref{eq concatenate1steqinClaim52} and by repeatedly applying Claim \ref{claim 1}, we obtain
\aryst
D(V^{\sigma_0}_{\bar{N} k -1}\cdots V_0^{\sigma_0})(y')< e^{-\bar{N}\epsilon_2/2},\ \forall y'\in (y-\bar{r}, y+\bar{r}).
\earyst
Thus 
\ary
V^{\sigma_0}_{\bar{N}k -1}\cdots V_0^{\sigma_0}((y-\bar{r}, y+\bar{r}))\subset 
V^{\sigma_0}_{\bar{N}k -1}\cdots V_0^{\sigma_0}(y)+e^{-\bar{N} \epsilon_2/2}(-\bar{r},\bar{r}). \label{8.7.11}
\eary
By  \eqref{8.7.6}, \eqref{(8.7.7)}, \eqref{eq gicheckficlose} and (\ref{8.7.11}), we have
\ary
&&U_{N-1}\cdots U_{\bar{N} k} V^{\sigma_0}_{\bar{N} k - 1}\cdots V_0^{\sigma_0}([y-\bar{r},y+\bar{r}]) \label{8.7.12}\\
&\subset & z+(\Vert f\Vert_{C^{0,1}}+1)^{N-\bar{N}k}e^{-\bar{N}\epsilon_2/2}[-\bar{r},\bar{r}]\subset [z-\frac{1}{10}\bar{r}, z+\frac{1}{10}\bar{r}], \nonumber
\eary
since we have
\aryst
(N-\bar{N}k) \log (\Vert f\Vert_{C^{0,1}}+1) - \bar{N}\epsilon_2/2 \leq - \bar{N}\epsilon_2/4 \leq - \log 10.
\earyst
\end{proof}

Let $\bar{r}\in(0, r_3)$ be given by the proposition. We define
$$\sigma_1 = \inf\{\sigma\in [0,\sigma_0]\mid U_{N-1}\cdots U_{\bar{N} k } V^\sigma_{\bar{N} k -1} \cdots V^\sigma_0([y-\bar{r}, y+\bar{r}]) \subset [z-\frac{1}{10}\bar{r}, z+\frac{1}{10}\bar{r}]\}.$$
  By Claim \ref{claim 2}, we see that $\sigma_1$ is well-defined.   By the last inclusion in Claim \ref{claim 1}, $\sigma_1$ depends continuously on $(x, y, z)$. Let us define 
  \aryst
  p_i = \begin{cases}  
  v^{\sigma_1}_{i}, & 0 \leq i \leq \bar{N}k  - 1, \\
  u_i, & \bar{N} k  \leq i \leq N-1.
  \end{cases}
  \earyst
   Conclusion (1)-(3) then follow from the construction. 
\end{proof} 

\section{Density of mode-locking} \label{sec Density of mode-locking}
In this section, we will first give the proof of Theorem \ref{thm:29'}, and then deduce Theorem \ref{thm:29} as a corollary.
\begin{proof}[Proof of Theorem \ref{thm:29'}] 
Let us assume to the contrary that 
\aryst
C^0(X, \cH) \neq \overline{\mathcal{ML}(\cH)}.
\earyst
Given an arbitrary  $H \in C^0(X, \cH) \setminus \overline{\mathcal{ML}(\cH)}$ and an arbitrary constant $\epsilon\in  (0, 1)$. By Lemma \ref{lem densityofcontractable} and Theorem \ref{Theorem 30}, up to replacing $H$ by an arbitrarily close element,  we may assume without loss of generality that $H$ is contractable and  $L_+( H )= L_-( H )=0$.   It remains to show that there exists $H' \in  \mathcal{ML}(\cH)$ such that $D_{\cal H}(H, H')<\epsilon$, as  this would be a contradiction.
In order to simplify the notation, let us denote $f = \bPhi(H)$, and let $F$ denote a lift of $f$.

Let integer $d > 0$ be given by Lemma \ref{LEMMA 58}.
Let $\kappa_3$ be given by Proposition \ref{prop 38}. Without loss of generality, we may assume that $\kappa_3(f, \delta)$ is monotonically increasing in $\delta$.
We inductively define positive constants $0 < \epsilon_{-1} < \epsilon_{0} < \cdots < \epsilon_{d}$ by the following formula,
\aryst
 \epsilon_d=\frac{\epsilon}{4(d+1)},   \ 
\epsilon_{d-k}=\min(\frac{1}{2}\epsilon_{d-k+1},\frac{1}{2(d+1)}\kappa_3(f,\epsilon_{d-k+1})),  \ \forall 1\leq k\leq d+1.
\earyst
Then we have
\ary
2(\epsilon_0+\cdots+\epsilon_d)<\epsilon, \   2(\epsilon_0+\cdots+\epsilon_k)< \kappa_3(f,\epsilon_{k+1}),\forall 0\leq k\leq d-1. \label{8.8.1}
\eary

We set $\kappa' = \inf_{1\leq i\leq d} \kappa_3(f, \epsilon_i)$. 
Recall that by our hypothesis, $g$ is uniquely ergodic. Let us denote by $\nu$ the unique $g$-invariant measure on $X$.
By our hypothesis that $G(g)$ (see Definition \ref{8.2.1}) is dense in $\mathbb{R}$, we can choose a constant
\begin{equation}
\rho'\in(\rho(F)-\frac{1}{4}\kappa',\rho(F)+\frac{1}{4}\kappa')\cap G(g). 
\label{8.8.2}
\end{equation}
By Definition \ref{8.2.1}, there exist continuous maps $\phi : X\rightarrow \mathbb{R}$ and $\psi: X \rightarrow \mathbb{R}/\mathbb{Z}$ so that $\rho' = \int \phi d\nu$ and $\psi(g(x))-\psi(x)=\phi(x) \mod 1.$

Since $g$ is uniquely ergodic, we can choose $n_0 = n_0(f,\epsilon) > 0$ to be a large integer so that
\ary
n_0>N_5(H,\epsilon_i), \ \forall -1\leq i\leq d, \label{8.8.3} \ \mbox{(see Proposition \ref{prop 38} for $N_5$)}
\eary
and for any integer $n > n_0$, we have
\ary
\sup\limits_{x\in X}\vert \Sigma^{n-1}_{i=0} \phi(g^i(x))-n\rho' \vert < \frac{1}{4}n\kappa',  \label{8.8.4} \
\sup\limits_{x\in X, y \in \R} \vert  (F^{n})_{x}( y ) -  y  - n \rho(F) \vert < \frac{1}{4}n\kappa'.
\eary

We choose an open set $U \subset X$ such that $\phi|_U$ admits a continuous lift $\hat{\psi}: U \rightarrow \R$. Namely, $\hat{\psi}$ is continuous and $\psi(x) = \hat{\psi}(x) \mod 1$ for any $x \in U$.

By Lemma \ref{LEMMA 58} and by enlarging $n_0$ if necessary, we can choose a compact set $K \subset U$ that is $d$-mild, $n_0$-good and $M$-spanning for some $M > 0$.

 We choose an arbitrary $\bar{r}\in (0, \frac{1}{4})$ such that
\begin{equation} 
\bar{r}<r_3(H,l, \epsilon), \ \forall n_0< l\leq M, \label{8.8.5}
\end{equation}
where $r_3$ is given by Proposition  \ref{prop 38}.
 
Let $\{ K^i \}^d_{i=-1}, \{Z^i \}^d_{i=0}$ and $l: X \to \Z_+$  be defined as in Section \ref{secKZ}, associated to $K$. We will define a sequence of  $H^{(i)} \in C^0(X, \cH), 1 \leq  i\leq d $ by induction.
  
  We define $H^{(-1)} = H$, $f^{(-1)} = f = \bPhi(H^{(-1)})$ and $F^{(-1)} = F.$ Assume that we have defined $H^{(k)}$ for some  $-1 \leq k \leq d-1$ such that, let $F^{(k)}$ be the lift of $f^{(k)} = \bPhi(H^{(k)})$ that is close to $F^{(k-1)}$ if $k \geq 0$, then
  \begin{itemize}
  	\item[$(f1)$] $D_{\cal H}(H^{(k)}, H)\leq 2(\epsilon_0+\cdots+\epsilon_k),$
	\item[$(f2)$] for any $x\in K^k$, we have $((F^{(k)})^{l(x)})_x(\hat{\psi}(x))=\hat{\psi}(g^{l(x)}(x))+\Sigma_{j=0}^{l(x)-1}\phi(g^j(x)),$
	\item [$(f3)$] for any $x\in K^k$, we have $((F^{(k)})^{l(x)})_x(\hat{\psi}(x)+[-\bar{r},\bar{r}])\subset \hat{\psi}(g^{l(x)}(x))+\Sigma_{j=0}^{l(x)-1}\phi(g^j(x))+[-\frac{1}{10}\bar{r},\frac{1}{10}\bar{r}].$
  \end{itemize}
  Note that the above properties are true for $k =  -1$ simply because $K^{-1} = \emptyset$ by Lemma \ref{LEMMA 59}(4).
  
  For each $-1\leq j\leq d$, we let
  $$W^j = \bigcup_{x\in K^j} \bigcup_{0\leq i < l(x)} \{g^i(x)\}.$$ 
  Since $K^d = K$ is $M$-spanning, we have $W^d = X$. Recall that we have the following lemma.
  \begin{lemma}[Lemma 10 in \cite{Zha}]\label{LEMMA 62}
Given an integer $0 \leq j \leq d$, let $\{ x_n \}_{n \geq 0 }$ be a sequence of points in $K^j$ converging to $x'$, and let $ \{l_n \in [0, l(x_n)\}_{n \geq 0}$ be a sequence of integers converging to $l'$.
Then after passing to a subsequence, we have exactly one of the following possibilities:
\enmt
\item[either (1)] $x' \in Z^{j}$ and $0 \leq l' < l(x')$,
\item[or (2)]  $x' \in K^{j-1}$, and there exist a unique $x'' \in K^{j-1}$ and a unique $0 \leq l'' < l(x'')$ such that $g^{l'}(x') = g^{l''}(x'') \in W^{j-1}$.
\eenmt
In particular, $W^j$ is closed.
  \end{lemma}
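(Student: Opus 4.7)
The plan is to leverage the stratification results of Lemma \ref{LEMMA 59} --- closedness of $K^j$, upper semicontinuity of $T$ and $T_B$, local constancy of $l$ on each $Z^i$, and the persistence property \ref{LEMMA 59}(5) --- and to split the analysis on whether $x' \in Z^j$ or $x' \in K^{j-1}$. Since $K^j$ is closed, $x'$ automatically lies in $K^j$. A preliminary step common to both cases is upper semicontinuity of $T$, which gives $l^+(x_n) \le l^+(x')$ for all large $n$, so $l(x_n) \le l^+(x')$ by Lemma \ref{LEMMA 59}(1). In particular $l_n$ is a bounded integer sequence converging to $l'$, so $l_n = l'$ eventually and, crucially, $l' < l^+(x')$; this last bound will be the key input in Case (2).

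In Case (1), where $x' \in Z^j$, the first step is to upgrade $x_n \in K^j$ to $x_n \in Z^j$. Upper semicontinuity of $T_B$ yields $N(x_n) \le N(x') = d - j$ for large $n$, while the hypothesis $x_n \in K^j$ forces $N(x_n) \ge d - j$; so $N(x_n) = d - j$ and $x_n \in Z^j$. Because $l$ is locally constant on $Z^j$ by Lemma \ref{LEMMA 59}(3), $l(x_n) = l(x')$ for $n$ large, and combined with $l_n < l(x_n)$ this forces $l' < l(x')$.

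The main obstacle is Case (2), where $x' \in K^{j-1}$. Here $x'$ itself typically fails to serve as the witness: the return times $l(x_n)$ in the lower stratum $K^j \setminus K^{j-1}$ can be strictly larger than $l(x')$, so $l'$ may exceed $l(x')$, and one must walk forward along the orbit of $x'$ to find the correct base point. I will set
\[
m^* = \max\{m \in \{0, 1, \dots, l'\} : g^m(x') \in K\}, \quad x'' = g^{m^*}(x'), \quad l'' = l' - m^*,
\]
which is well defined since $m = 0$ lies in the set. Since $m^* \le l' < l^+(x')$ and $g^{m^*}(x') \in K$, Lemma \ref{LEMMA 59}(5) applied to $x' \in K^{j-1}$ upgrades $x''$ from $K$ to $K^{j-1}$. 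The maximality of $m^*$ ensures $g^t(x'') \notin K$ for $0 < t \le l''$, giving $l(x'') > l''$ as required.

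Uniqueness of $(x'', l'')$ is then a short algebraic check: any two admissible pairs $(x''_1, l''_1)$ and $(x''_2, l''_2)$ with $g^{l''_1}(x''_1) = g^{l''_2}(x''_2)$ and $l''_1 < l''_2$ would make $x''_1 = g^{l''_2 - l''_1}(x''_2)$ a point of $K$ reached from $x''_2$ in $0 < l''_2 - l''_1 < l(x''_2)$ steps, contradicting the minimality in the definition of $l(x''_2)$. The closedness of $W^j$ then follows directly from the dichotomy: in Case (1), $g^{l'}(x') \in W^j$ since $x' \in K^j$ with $l' < l(x')$; in Case (2), $g^{l'}(x') = g^{l''}(x'') \in W^{j-1} \subset W^j$.
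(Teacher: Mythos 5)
Your argument is correct and complete: the preliminary bound $l' < l^{+}(x')$ via upper semicontinuity of $T$ and Lemma~\ref{LEMMA 59}(1), the upgrade $x_n \in Z^j$ via upper semicontinuity of $T_B$ in Case (1), and the choice $x'' = g^{m^*}(x')$ combined with Lemma~\ref{LEMMA 59}(5) and the return-time minimality for uniqueness in Case (2) are exactly the right ingredients. The paper does not reprove this statement but quotes it from Lemma 10 of \cite{Zha}, whose proof follows the same standard route through the stratification properties, so there is nothing to flag.
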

     
  By (\ref{8.8.1}) and $(f1)$, we have $D_{\cal H}(H^{(k)}, H) < \kappa_3(f,\epsilon_{k+1})$. 
  By  \eqref{8.8.3} and \eqref{8.8.5}, we can apply Proposition \ref{prop 38} to $(\epsilon_{k+1}, l, H, H^{(k)},\bar{r})$ in place of $(\epsilon, N, H, \check{H}, \bar{r})$ to define $\Psi_l$  for all $n_0 <l\leq   M$.

We define a continuous map $\tilde{H} : W^{k+1} \to \cH$ such that $\tilde{H}(x) = H^{(k)}(x)$ for every $x \in W^k$ in the following way. Let
  \begin{equation}
\tilde{H}(x) = H^{(k)}(x),  \ \forall x\in W^k.  \label{8.8.6}
\end{equation}
For any $x\in Z^{k+1}$, we have $n_0 <l(x)\leq M$. By (\ref{8.8.2}) and (\ref{8.8.4}), we have
\aryst
 && \vert  \hat{\psi}(x)  +  \sum\limits_{i=0}^{l(x)-1} \phi(g^i(x)) -  (F^{l(x)})_{x}( \hat{\psi}(x) ) \vert \\
 &\leq& 
 \vert  \sum\limits_{i=0}^{l(x)-1} \phi(g^i(x))-l(x)\rho(F)\vert  + \vert  (F^{l(x)})_{x}( \hat{\psi}(x) ) -  \hat{\psi}(x)  - l(x)\rho(F)\vert \\
  &<& 3 \times \frac{1}{4}l(x)\kappa'  < l(x)\kappa_3(f,\epsilon_{k+1}).
 \earyst
  Then we can define
    \begin{equation}
(\tilde{H}(x), \cdots,  \tilde{H}(g^{l(x)-1}(x))) =\Psi_{l(x)}(x,\hat\psi(x),\hat\psi(x)+\Sigma^{l(x)-1}_{i=0}\phi(g^i(x)) ). \label{8.8.7}
\end{equation}
By Proposition  \ref{prop 38}(1), we have
    \begin{equation}
d_{\cH}(\tilde{H}(g^i(x)), H^{(k)}(g^i(x))) < 2\epsilon_{k+1}, \ \forall x\in Z^{k+1}, 0\leq i< l(x). \label{8.8.8}
\end{equation}

For $x \in K^{k+1}$, for each $0 \leq i < l(x)$, we let $\tilde{F}_{i}$ be the lift of $\iota_{\cH}(\tilde{H}(g^{i}(x)))$ that is close to $F^{(k)}_{g^{i}(x)}$.
By $(f2),(f3)$ for $k$, and by Proposition  \ref{prop 38}(2), for any $x\in K^{k+1} = K^{k} \cup Z^{k+1}$, we have
\aryst
\tilde{F}_{l(x)-1} \cdots \tilde{F}_{0}(\hat\psi(x)) &=& \hat\psi(x) + \Sigma^{l(x)-1}_{i=0} \phi(g^i(x)),\\
\tilde{F}_{l(x)-1} \cdots \tilde{F}_{0}([\hat\psi(x)-\bar{r}, \hat\psi(x)+\bar{r}]) &\subset& \hat\psi(x) + \Sigma^{l(x)-1}_{i=0} \phi(g^i(x))+[-\frac{1}{10}\bar{r},\frac{1}{10}\bar{r}].
\earyst

We have the following.
\begin{lemma}\label{lemma 72}
The map $\tilde{H}$ is continuous.
\end{lemma}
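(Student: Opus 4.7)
The plan is to verify continuity of $\tilde{H}$ at each $y_0 \in W^{k+1}$ by a subsequence argument combined with the dichotomy in Lemma \ref{LEMMA 62}. Because $K$ is $n_0$-good, every point $y \in W^{k+1}$ has a unique representation $y = g^i(x)$ with $x \in K^{k+1}$ and $0 \leq i < l(x)$, so I write $y_n = g^{i_n}(x_n)$ and $y_0 = g^{i_0}(x_0)$. By compactness of $K^{k+1}$ and the bound $i_n < l(x_n) \leq M$, I pass to a subsequence along which $x_n \to x' \in K^{k+1}$ and $i_n = i'$ for all large $n$. Lemma \ref{LEMMA 62} with $j = k+1$ then places us in either Case (1), where $x' \in Z^{k+1}$ and $0 \leq i' < l(x')$, or Case (2), where $x' \in K^k$ and there exist $x'' \in K^k$, $0 \leq l'' < l(x'')$ with $g^{i'}(x') = g^{l''}(x'')$.

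In Case (1), Lemma \ref{LEMMA 59}(4) says $K^k$ is closed in $K^{k+1}$, so $Z^{k+1}$ is relatively open and $x_n \in Z^{k+1}$ eventually; Lemma \ref{LEMMA 59}(3) then gives $l(x_n) = l(x')$ eventually. Uniqueness of representation forces $(x_0, i_0) = (x', i')$. Both $\tilde{H}(y_n)$ and $\tilde{H}(y_0)$ are therefore the $i'$-th coordinate of $\Psi_{l(x')}$ applied to the continuously varying input $(x, \hat\psi(x), \hat\psi(x) + \sum_{j=0}^{l(x')-1} \phi(g^j(x)))$ evaluated at $x = x_n$ and at $x = x'$ respectively, and the continuity of $\Psi_{l(x')}$ (from Proposition \ref{prop 38}) combined with that of $\hat\psi$, $\phi$, and $g^j$ delivers $\tilde{H}(y_n) \to \tilde{H}(y_0)$.

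In Case (2), we have $y_0 = g^{l''}(x'') \in W^k$ and hence $\tilde{H}(y_0) = H^{(k)}(y_0)$ by \eqref{8.8.6}. Passing to a further subsequence, if $x_n \in K^k$ for all $n$ then $\tilde{H}(y_n) = H^{(k)}(y_n) \to H^{(k)}(y_0)$ by continuity of $H^{(k)}$. If instead $x_n \in Z^{k+1}$ for all $n$, I first show $l(x_n) = L := l(x')$ eventually: upper-semicontinuity of $l$ (Lemma \ref{LEMMA 59}(2)) gives $l(x_n) \leq L$ for large $n$, and a strictly smaller limit $L_0 < L$ would force $g^{L_0}(x') \in K$ (since $K$ is closed and $g^{L_0}(x_n) \in K$), contradicting the minimality defining $l(x') = L$. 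Next, the inductive invariants $(f2)$ and $(f3)$ at the point $x' \in K^k$ express exactly the hypothesis of Proposition \ref{prop 38}(3) at $(x', \hat\psi(x'), z(x'))$ with $z(x) := \hat\psi(x) + \sum_{j=0}^{L-1}\phi(g^j(x))$, $\check{H} = H^{(k)}$, and $\check{F} = F^{(k)}$, and so that proposition forces $\Psi_L(x', \hat\psi(x'), z(x')) = (H^{(k)}(g^j(x')))_{j=0}^{L-1}$. The continuity of $\Psi_L$ and of the inputs then transports the identity to the limit along $x_n$, and in particular the $i'$-th coordinate converges to $H^{(k)}(g^{i'}(x')) = H^{(k)}(y_0) = \tilde{H}(y_0)$. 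The main obstacle is verifying that the prescribed value $z(x')$ matches $(F^{(k)})^L_{x'}(\hat\psi(x'))$ exactly (not merely modulo $1$) so that Proposition \ref{prop 38}(3) can be invoked without an integer correction in the lift; this is precisely the content of the inductive invariant $(f2)$ combined with the choice of a continuous lift $\hat\psi$ on $U \supset K$.
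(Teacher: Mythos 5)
Your overall architecture is the same as the paper's: reduce to sequences via Lemma \ref{LEMMA 62}, handle Case (1) by local constancy of $l$ on $Z^{k+1}$ and continuity of $\Psi_{l(x')}$, and split Case (2) into the subcases $x_n\in K^k$ and $x_n\in Z^{k+1}$, invoking Proposition \ref{prop 38}(3) in the latter. However, there is a genuine gap in the second subcase of Case (2): your claim that $l(x_n)=L:=l(x')$ eventually is false in general. Lemma \ref{LEMMA 59}(2) asserts upper semicontinuity of $T$ and $T_B$ (hence of $l^{\pm}$), not of $l$; the return time $l$ is only \emph{lower} semicontinuous (your argument that $L_0<L$ would force $g^{L_0}(x')\in K$, contradicting minimality, proves exactly $\liminf l(x_n)\geq l(x')$ and nothing more). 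The upper bound $l(x_n)\leq l(x')$ requires $g^{l(x')}(x')\in \mathrm{int}(K)$, and in Case (2) of Lemma \ref{LEMMA 62} the limit point $x'$ drops into the deeper stratum $K^k$ precisely because additional hits of $\partial K$ accumulate; the typical mechanism is $g^{l(x')}(x')\in\partial K$ while $g^{l(x')}(x_n)\notin K$, which forces $l(x_n)>l(x')$. So the constant value $l_0$ of $l(x_n)$ along a subsequence can strictly exceed $l(x')$, and then $(f2)$, $(f3)$ — which are stated only for the \emph{first} return time $l(x')$ — do not directly furnish the hypotheses of Proposition \ref{prop 38}(3) for the time-$l_0$ composition at $x'$.

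The repair is the step you omitted and the paper includes: keep $l_0$ as an arbitrary constant return value with $g^{l_0}(x')\in K$, decompose $[0,l_0)$ into the consecutive return intervals of $x'$ to $K$, and concatenate $(f2)$ and $(f3)$ along these returns. Lemma \ref{LEMMA 59}(5) is what makes this legitimate: each intermediate return point $g^m(x')\in K$ with $0\leq m<l^{+}(x')$ again lies in $K^k$, so the inductive invariants apply at every step, and the composed map over $[0,l_0)$ sends $\hat\psi(x')$ to $\hat\psi(x')+\sum_{i=0}^{l_0-1}\phi(g^i(x'))$ and contracts the $\bar r$-interval as required. Only then can Proposition \ref{prop 38}(3) be invoked to conclude that $\Psi_{l_0}(x',\cdot,\cdot)$ returns the unperturbed values $H^{(k)}(g^i(x'))$. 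Your closing observation about the identity holding in the lift rather than merely mod $1$ is correct and is indeed supplied by $(f2)$ together with the choice of the continuous lift $\hat\psi$ on $U\supset K$, but it does not substitute for the missing chaining argument.
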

\begin{proof}
 It is enough to show that for any $\{x_n\}, \{l_n\}, x', l'$  in Lemma \ref{LEMMA 62} with $j = k + 1$, we have
     \begin{equation}
     \tilde{H}(g^{l_n}(x_n)) \rightarrow  \tilde{H}(g^{l'}(x')), n\rightarrow \infty.
\label{8.8.9}
\end{equation}
We first assume that conclusion (1) in Lemma \ref{LEMMA 62} is true, namely, $x'\in Z^{k+1}$. Then (\ref{8.8.9}) follows immediately from Lemma \ref{LEMMA 59}(3) and the continuity of $\Psi_{l(x')}$.

Now assume that conclusion (2) in Lemma \ref{LEMMA 62} is true, namely, $x'\in  K^k$. It is enough to prove (\ref{8.8.9}) in the following  two cases: (1) $x_n \in K^k$ for all $n$; (2) $x_n \in Z^{k+1}$ for all $n$. 
In the first case, we have $g^{l_n}(x_n) \in W^k$ for all $ n$. By Lemma \ref{LEMMA 62}, we have $g^{l'}(x')\in W^k$. Then (\ref{8.8.9}) follows from (\ref{8.8.6}) and the fact that $F^{(k)}$ is continuous.

Assume that the second case is true, namely, $x_n\in  Z^{k+1}$ for all $n$. Moreover, after passing to a subsequence, we can assume that there exists 
$l_0$ such that $l(x_n) = l_0$ for all $n$. By (\ref{8.8.7}), we have
$$ \tilde{H}(g^{l'}(x_n)) = \mbox{the $l'$-th coordinate of } \Psi_{l_0}(x_n).$$
Then by the continuity of $\Psi_{l_0}$ and the fact that $x_n\rightarrow x', l_n\rightarrow l'$  as $n$ tends to
infinity, we have that
$$ \tilde{H}(g^{l_n}(x_n)) \rightarrow \mbox{the $l'$-th coordinate of } \Psi_{l_0}(x'), n\rightarrow \infty.$$
It is then enough to show that the $l'$-th coordinate of $\Psi_{l_0}(x')$ equals $F^{(k)}_{g^{l'}(x')}$. By Proposition  \ref{prop 38}(3),  it is enough to verify that
\ary
(({F}^{(k)})^{l_0})_{x'}(\hat\psi(x')) &=& \hat\psi(x') + \Sigma^{l_0-1}_{i=0} \phi(g^i(x')),\\
((\tilde{F}^{(k)})^{l_0})_{x'}(\hat\psi(x')+[-\bar{r},\bar{r}]) &\subset& \hat\psi(x') + \Sigma^{l_0-1}_{i=0} \phi(g^i(x'))+[-\frac{1}{10}\bar{r},\frac{1}{10}\bar{r}].
\eary
These follow from $(f2),(f3)$ and Lemma \ref{LEMMA 59}(5).
\end{proof}

By (\ref{8.8.8}) and Proposition \ref{prop-tietze}, we can choose $H^{(k+1)}\in C^0(X, \cH)$   so that $D_{\cal H} (H^{(k+1)}, H^{(k)}) < 2\epsilon_{k+1}$ and satisfies that $H^{(k+1)}(x) = \tilde{H}(x)$ for all $x\in  W^{k+1}$. It is straightforward to verify $(f1)$-$(f3)$
for $k + 1$. This completes the induction.

We let $H' = H^{(d)}$, $f' = \Phi(H')$, and let $\mathcal{R}= \cup_{x\in K}\{x\}\times(\psi(x)-\bar{r}, \psi(x)+\bar{r})$.  By $(f3)$, we can
see that $f'_K(\overline{{\cal R}})\subset \mathcal{R}$. By Lemma \ref{lemma 66}, $f'$  is mode-locked, and hence $H' \in  \mathcal{ML}(\cH)$. This concludes the proof.   
\end{proof}

\begin{proof}[Proof of Theorem \ref{thm:29}]
By Theorem \ref{thm:29'}, it remains to consider the case where  $(\cH, d_{\cH}, \iota_{\cH})$ is given by Example \ref{exam III}.
In this case, $\cH = \R$ and $d_{\cH} = d_{\R}$. Moreover, we may assume that the function $P$ for defining $(\cH, d_{\cH}, \iota_{\cH})$ (see Example \ref{exam II}) has a smaller period, for otherwise $(\cH, d_{\cH}, \iota_{\cH})$ is given by Example \ref{exam II}, and we could already conclude by Theorem \ref{thm:29'}.

Now let $R^{-1} \in (0, 1)$ be the smallest positive period of $P$, where $R \in \Z_{> 0}$ (such $R$ exists since $P$ is non-constant). 
Then there exists a non-constant real analytic function $P \in C^{\omega}(\T)$ with no smaller period such that  $\tilde{P}(R w) = RP(w)$ for every $w \in \T$.  Notice that we have $\| \tilde P' \| = \| P' \| < 1$.
For each $h \in \R$, we define 
\aryst
\tilde{\iota}_{\cH}(h)(w)  = w + \tilde P(w) +  h.
\earyst
Then we have
\aryst
R \iota_{\cH}(h)(w) = \tilde\iota_{\cH}(Rh)(Rw).
\earyst
We define a continuous map $\tilde\Phi : C^0(X, \cH) \to \Diff^{0,1}_g(X \times \T)$ by
\aryst
\tilde\Phi(H)(x, w) = (g(x), \tilde\iota_{\cH}( H(x))(w)).
\earyst
By definition, it is straightforward to deduce the equation
\aryst
\rho(\tilde\Phi(R \cdot H)) = R \rho (\Phi(H)) \in \R/\Z
\earyst
where $R \cdot H$ denotes the function $x \mapsto R H(x)$ in $C^0(X, \cH)$.
By Lemma \ref{claim equivdefmodelock}, we see that $\tilde\Phi(R \cdot H)  \in \cal{ML}$ if and only if $\Phi(H) \in  \cal{ML}$.
By definition, $(\cH, d_{\cH}, \tilde{\iota}_{\cH})$ is given by Example \ref{exam II}. By Theorem \ref{thm:29'}, the set 
$\cal{ML}(H, \tilde\iota_{\cH})$ is dense. Hence $\cal{ML}(H, \iota_{\cH})$ is also dense. This concludes the proof.
\end{proof}

\section{Proof of Lemma \ref{lem mainpropertymovableset}} \label{sec Perturbation lemmata}

Throughout this section, , we always assume that  $(\cH, d_\cH, \iota_\cH)$ is given by either Example \ref{exam I} or  \ref{exam II}.
Moreover, we are working under the fictitious condition that 
\aryst
 C^0(X, \cH) \setminus \overline{ \mathcal{ML}(\cH) } \neq \emptyset.
 \earyst

We fix  some contractable $H \in  C^0(X, \cH) \setminus \overline{ \mathcal{ML}(\cH) }$ such that $L_+(H)>L_-(H)$. We denote $f = \Phi(H)$, and denote by $F$ a ift of $f$.


We have the following result, making use of only  the fact that  $H \in  C^0(X, \cH) \setminus \overline{ \mathcal{ML}(\cH) }$.
\begin{lemma} \label{lem uniformgrowth}
There exist a constant $\kappa = \kappa(H) > 0$ and functions $\Delta, K: \R_+ \to \R_+$ such that the following is true.
For any $\epsilon > 0$,  $n > K(\epsilon)$,  $\tilde H \in \cB_{\cal H}(H, \kappa)$ and $x \in X$, we have 
\aryst
\inf_{y \in \R} [(F_{\epsilon}^n)_x(y)   - (F^n)_{x}(y)] > n \Delta(\epsilon),
\earyst
where $F$ is an arbitrary lift of $\Phi(H)$ (clearly the left hand side above is independent of the choice of the lift).
\end{lemma}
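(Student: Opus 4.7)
The plan is to exploit that the hypothesis $H \notin \overline{\mathcal{ML}(\cH)}$ provides a whole ball of non-mode-locked maps, combined with the observation that $\cH$ contains the fiberwise constant translations, so the shift parameter $t$ in $\tilde F_t$ can be realized as moving $\tilde H$ inside that ball.

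First I would choose $\kappa_0 > 0$ with $\cB_\cH(H, 2\kappa_0) \cap \overline{\mathcal{ML}(\cH)} = \emptyset$. In both Example \ref{exam I} and Example \ref{exam II}, fiberwise constant translations lie in $\cH$: in Example \ref{exam I}, $h \mapsto h + t$ is another element of $\widetilde{\Diff^r(\T)}$, while in Example \ref{exam II} one has $\cH = \R$ directly. In either case the $D_\cH$-distance from $\tilde H$ to its $t$-translate is controlled by $L_0 |t|$ for a constant $L_0$ depending only on the regularity of maps in $\cB_\cH(H, \kappa_0)$. Setting $\kappa = \kappa_0$ and $t_0 = \kappa_0/L_0$, every $\tilde H \in \cB_\cH(H, \kappa)$ and every $|t| < t_0$ produce a translate in $\cB_\cH(H, 2\kappa_0)$, hence not mode-locked. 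Combining Lemma \ref{claim equivdefmodelock} with the weak monotonicity of the rotation number in the shift parameter, it follows that $t \mapsto \rho(\tilde F_t)$ has no flat spot in $(-t_0, t_0)$, and is therefore strictly increasing there.

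Fixing $\epsilon > 0$, I would set $\bar\epsilon = \min(\epsilon, t_0/2)$. Strict monotonicity yields $\rho(\tilde F_{-\bar\epsilon}) < \rho(\tilde F_0) < \rho(\tilde F_{\bar\epsilon})$, and Corollary \ref{LEMMA 56} supplies constants producing $(\tilde F_{\bar\epsilon}^n)_x(y) - (\tilde F^n)_x(y) \geq n \tilde\kappa_1$ for all $n$ exceeding a threshold. The case $\epsilon > t_0/2$ is handled by the pointwise monotonicity $(\tilde F_\epsilon^n)_x(y) \geq (\tilde F_{\bar\epsilon}^n)_x(y)$, proved by a straightforward induction using that each fiber map of $\tilde F_\epsilon$ is a translate by $\epsilon - \bar\epsilon$ of the corresponding fiber map of $\tilde F_{\bar\epsilon}$.

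The main obstacle is upgrading the pointwise constants $\tilde\kappa_1(\tilde f, \bar\epsilon)$ and $\tilde N_1(\tilde f, \bar\epsilon)$ from Corollary \ref{LEMMA 56} into quantities $\Delta(\epsilon)$ and $K(\epsilon)$ that are uniform over $\tilde H \in \cB_\cH(H, \kappa)$, given that this ball is not pre-compact. I expect to resolve this by unwinding the proofs of Corollary \ref{LEMMA 56} and Lemma \ref{LEMMA 55} and checking that their constants can be made to depend only on $\|\tilde F\|_{C^{0,1}}$ (which is uniformly controlled on the ball via Remark \ref{rem comparenorms}) and on the rotation number gap $\rho(\tilde F_{\bar\epsilon}) - \rho(\tilde F_{-\bar\epsilon})$. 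That gap is positive on the entire ball and, by the $C^0$-Lipschitz continuity of the rotation number, admits a uniform positive lower bound expressible as a function of $\epsilon$ alone, which is what finally lets us set $\Delta$ and $K$ as genuine functions of $\epsilon$ with $\kappa$ independent of $\epsilon$.
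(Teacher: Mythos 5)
Your opening moves are sound and agree with the paper's setup: choosing $\kappa$ so that $\cB_{\cH}(H,2\kappa)$ avoids $\overline{\mathcal{ML}(\cH)}$, observing that the $t$-translates of every $\tilde H$ in the smaller ball remain non-mode-locked, and concluding via Lemma \ref{claim equivdefmodelock} and monotonicity that $t\mapsto\rho(\tilde F_t)$ is strictly increasing, so that Corollary \ref{LEMMA 56} applies to each individual $\tilde H$. But that pointwise statement is essentially the trivial part; the entire content of the lemma is the uniformity of $\Delta(\epsilon)$ and $K(\epsilon)$ over the non-compact ball, and your resolution of that step does not work. First, the claim that the gap $\rho(\tilde F_{\bar\epsilon})-\rho(\tilde F_{-\bar\epsilon})$ has a positive lower bound depending only on $\epsilon$ ``by $C^0$-Lipschitz continuity of the rotation number'' is unjustified: Lipschitz continuity makes the gap a continuous function of $\tilde H$, but a continuous positive function on a non-compact set can have infimum zero, and nothing in the hypotheses rules this out directly. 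Second, and more seriously, even granting a uniform gap, the constants $N_0$ of Lemma \ref{LEMMA 55} and hence $N_1,\kappa_1$ of Corollary \ref{LEMMA 56} are \emph{not} functions of $\Vert\tilde F\Vert_{C^{0,1}}$ and the gap alone: $N_0$ measures the rate at which $\overline{M}(\tilde F,n)/n$ and $\underline{M}(\tilde F,n)/n$ converge to $\rho(\tilde F)$, which comes from the unique ergodicity of the skew product and can be arbitrarily slow among maps with the same $C^{0,1}$ bound. So ``unwinding the proofs'' will not yield constants depending only on those two quantities, and the plan stalls exactly at the point you flagged as the main obstacle.

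The paper avoids quantifying these rates altogether by arguing by contradiction: if uniformity failed, there would be $\epsilon>0$, $n_k\to\infty$, $H_k\in\cB_{\cH}(H,\kappa)$ and points $(x_k,y_k)$ along which the average gain over $n_k$ steps is at most $n_k/k$. A pigeonhole/telescoping argument (using monotonicity of the fiber maps) localizes the failure to a window of \emph{fixed} length $m$: for large $k$ there are $u_k,z_k$ with $((F^{(k)}_{\epsilon})^{m})_{u_k}(z_k)-((F^{(k)})^{m})_{u_k}(z_k)\le 2m/k$. Passing to a subsequential limit $\check H$ (still outside $\overline{\mathcal{ML}(\cH)}$ by the choice of $\kappa$) gives a point where the $\epsilon$-shifted $m$-step orbit gains nothing, contradicting Corollary \ref{LEMMA 56} applied to $\check H$ once $m$ is large. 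If you want to keep a direct (non-contradiction) argument, you would need to supply a genuinely new uniform quantitative input replacing Lemma \ref{LEMMA 55}; as written, the proposal has a gap precisely where the lemma has its content.
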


\begin{proof}

Take a constant $\kappa > 0$ such that $\cB_{\cal H}(H, 2 \kappa) \subset C^0(X, \cH) \setminus \overline{ \mathcal{ML}(\cH) }$.

Assume to the contrary that the lemma is false. Then there exist some $\epsilon > 0$, an increasing sequence $\{ n_k \}_{k \geq 1}$, a sequence $\{ H_k \in \cB_{\cal H}(H, \kappa) \}_{k \geq 1}$, a sequence $\{ x_k \in X \}_{k \geq 1}$ and a sequence  $\{ y_k \in [0,1) \}_{k \geq 1}$  such that 
\ary \label{eq growthbecomesslower}
((F^{(k)}_{\epsilon})^{n_{k}})_{x_k} (y_k)   -  ((F^{(k)})^{n_k})_{x_k}(y_k) \leq n_k/k
\eary
where $F^{(k)}$ is a lift of $\Phi(H_k)$.
We may assume without loss of generality that the set $\{ F^{(k)} \}_{k \geq 1}$ is bounded in $C^0(\R, \R)$; and $H_k$ converges to some $\check{H}$. Then, after passing to a subsequence, we may assume that $F^{(k)}$ converges to a lift $\check{F}$ of $\Phi(\check{H})$.

Fix an arbitrary integer $m > 0$. 
By \eqref{eq growthbecomesslower},  for all sufficiently large integer $k > 0$, there exists some $0 \leq  l_k \leq n_k - m$ such that, for $u_k = g^{l_k}(x_k)$ and some $z_k \in [0,1)$, we have
\aryst
((F^{(k)}_{\epsilon})^{m})_{u_k}(z_k)   - ((F^{(k)})^{m})_{u_k}(z_k) \leq 2m/k.
\earyst

By extracting a subsequence, we may assume that $u_k$ converges to some $u \in X$, and $z_k$ converges to some $z \in [0,1]$. Then we have
\ary \label{eq fepsilonfequal0}
(\check{F}_{\epsilon}^m)_{u}(z)   -  (\check{F}^m)_{u}(z) = 0.
\eary
By our choice of $\kappa$, it is clear that $\check{H} \in C^0(X, \cH) \setminus \overline{ \mathcal{ML}(\cH) }$. Thus \eqref{eq fepsilonfequal0} contradicts Corollary \ref{LEMMA 56} if $m$ is sufficiently large.
Consequently, the lemma must be true.
\end{proof}

We have the following corollary.
\begin{cor} \label{cor uniformgrowth}
Let $\Delta, K$ be given as in Lemma \ref{lem uniformgrowth}.
Then there exists a constant $\kappa > 0$ such that the following is true. For any $\epsilon > 0$, $n > K(\epsilon)$,  $x \in X$ and any $(H_{i})_{i=0}^{n-1} \in \cH^{n}$ such that $d_{\cal H}(H_i, H(g^{i}(x))) < \kappa$, we have
\aryst
\inf_{y \in \R} [(F_{n-1})_{\epsilon} \circ \cdots \circ (F_{0})_{\epsilon}(y)   - F_{n-1} \circ \cdots \circ F_{0}(y)] > n \Delta(\epsilon),
\earyst
where $F_i$ is an arbitrary lift of $\iota_{\cal H}(H_i)$ for each $0 \leq i \leq n-1$.
\end{cor}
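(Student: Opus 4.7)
The plan is to reduce the corollary to Lemma \ref{lem uniformgrowth} by viewing the finite tuple $(H_i)_{i=0}^{n-1}$ as coming from an honest continuous global perturbation $\tilde{H} \in \cB_{\cal H}(H,\kappa)$ via Tietze's extension theorem (Proposition \ref{prop-tietze}).

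First, let $\kappa > 0$ and $\Delta, K : \R_+ \to \R_+$ be the constants and functions given by Lemma \ref{lem uniformgrowth} applied to our fixed $H$, and take the same $\kappa$ in the corollary. Fix $\epsilon > 0$, $n > K(\epsilon)$, $x \in X$ and a tuple $(H_i)_{i=0}^{n-1} \in \cH^n$ with $d_{\cal H}(H_i, H(g^i(x))) < \kappa$. Since $g$ is strictly ergodic on an infinite space $X$, it has no periodic points, so the set $Y := \{g^i(x) \mid 0 \leq i \leq n-1\}$ consists of $n$ distinct points and is therefore a finite (hence compact) discrete subset of $X$. Define $H_0 : Y \to \cH$ by $H_0(g^i(x)) = H_i$; this map is automatically continuous on $Y$, and by hypothesis satisfies $d_{\cal H}(H_0(y), H(y)) < \kappa$ for every $y \in Y$. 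By Proposition \ref{prop-tietze}, we may extend $H_0$ to some $\tilde{H} \in \cB_{\cal H}(H, \kappa)$ with $\tilde{H}(g^i(x)) = H_i$ for every $0 \leq i \leq n - 1$.

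Next I apply Lemma \ref{lem uniformgrowth} to $\tilde{H}$. Let $\tilde{F}$ be a lift of $\Phi(\tilde{H})$; then the lemma gives
\aryst
\inf_{y \in \R} [(\tilde{F}_\epsilon^n)_x(y) - (\tilde{F}^n)_x(y)] > n \Delta(\epsilon).
\earyst
By construction of $\tilde{H}$, the fiber diffeomorphisms of $\tilde{F}$ along the orbit of $x$ are precisely $\tilde{F}_{g^i(x)}$, which are lifts of $\iota_{\cal H}(H_i)$ for $0 \leq i \leq n-1$. Hence for this particular choice of lifts $F_i = \tilde{F}_{g^i(x)}$, the quantity in the corollary agrees with the quantity bounded by the lemma, and the conclusion holds.

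Finally, one verifies that the displayed difference $(F_{n-1})_\epsilon \circ \cdots \circ (F_0)_\epsilon(y) - F_{n-1} \circ \cdots \circ F_0(y)$ does not depend on the particular choice of lift of $\iota_{\cal H}(H_i)$: replacing some $F_i$ by $F_i + k$ with $k \in \Z$ commutes through all subsequent $F_j$'s (which satisfy $F_j(w+1) = F_j(w) + 1$) as well as through the constant translations $(\cdot)_\epsilon$, so the integer $k$ cancels in the difference. The only step where one must be slightly careful is this lift-independence verification, but it is purely formal and follows from the $\Z$-equivariance built into the definition of a lift. This yields the uniform lower bound $n \Delta(\epsilon)$ for arbitrary lifts $F_i$, completing the proof.
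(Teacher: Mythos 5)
Your proof is correct and follows essentially the same route as the paper: the paper's own argument is precisely to extend the tuple $(H_i)_{i=0}^{n-1}$ to some $\tilde H\in\cB_{\cal H}(H,\kappa)$ via Proposition \ref{prop-tietze} and then invoke Lemma \ref{lem uniformgrowth}. Your additional remarks (distinctness of the orbit points, lift-independence of the difference) are correct details that the paper leaves implicit.
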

\begin{proof}
Given $x \in X$ and $n > K(\epsilon)$. We have by Proposition \ref{prop-tietze} that there exists some $\tilde H \in \cB_{\cal H}(H, \kappa)$ and $\tilde H(g^{i}(x)) = H_i$ for every $0 \leq i \leq n-1$. Then we can immediately deduce the corollary by Lemma \ref{lem uniformgrowth}.
\end{proof}

In the following lemma, we will use the fact that  $(\cH, d_\cH, \iota_\cH)$ is given by either Example \ref{exam I} or  \ref{exam II}.
\begin{lemma} \label{lem getparabolicelement}
For every $\epsilon > 0$, there exist an integer $M = M(H, \epsilon) > 0$ and a function $\delta_{\epsilon, M}: \R_+ \to \R_+$ such that for any $x \in X$ and $y \in \R$, there exists $( p^{(j)}_{i} )_{i = 0}^{M - 1} \in {\cal H}^{M}$, $j \in \{ 0,1 \}$ such that $d_{\cal H}(p^{(j)}_i, H(g^{i}(x))) < \epsilon$ for $j \in \{0,1\}$ and $0 \leq i \leq M -1$,
and the following is true. Denote by $P^{(0)}_i$ and $P^{(1)}_i$ lifts of $\iota_{\cal H}(p^{(0)}_i)$ and $\iota_{\cal H}(p^{(1)}_i)$ respectively, which are close to each other. Then we have
\enmt
\item $P^{(1)}_{M -1} \circ \cdots \circ P^{(1)}_{0}(y) = P^{(0)}_{M -1} \circ \cdots \circ P^{(0)}_{0}(y)$,
\item  $P^{(1)}_{M -1} \circ \cdots \circ P^{(1)}_{0}(z) > P^{(0)}_{M -1} \circ \cdots \circ P^{(0)}_{0}(z) +  \delta_{\epsilon, M}(\sigma)$ for every $z \notin y + (- \sigma, \sigma) + \Z$ and for every $\sigma > 0$.
\eenmt
\end{lemma}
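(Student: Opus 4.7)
The plan is to set $p^{(0)}_i := H(g^i(x))$, so that the composition $P^{(0)}_{M-1} \circ \cdots \circ P^{(0)}_0$ equals $(F^M)_x$, and to construct $p^{(1)}$ whose composition $P^{(1)} := P^{(1)}_{M-1} \circ \cdots \circ P^{(1)}_0$ agrees with $(F^M)_x$ at $y$ but lies strictly above it at every other point modulo $\Z$. Equivalently, $P^{(1)} \circ ((F^M)_x)^{-1}$ should be a parabolic-like circle diffeomorphism fixing $(F^M)_x(y) \bmod 1$.

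I write $M = N_1 + k + N_2$ with $N_1, N_2$ to be chosen large and $k$ the integer furnished by Lemma~\ref{lem arnoldfamilyismovable}, and build $p^{(1)}$ in three stages that combine the three tools available in this section. In Stage A (steps $0, \ldots, N_1-1$), take a uniform forward shift $p^{(1)}_i := H(g^i(x)) + \epsilon_1$ with $\epsilon_1 \in (0, \epsilon)$ small; by Corollary~\ref{cor uniformgrowth} the stage-A composition $\tilde P$ satisfies $\tilde P(z) - (F^{N_1})_x(z) \geq N_1 \Delta(\epsilon_1)$ for every $z$. In Stage B (steps $N_1, \ldots, N_1+k-1$), apply Lemma~\ref{lem arnoldfamilyismovable} at base-point $g^{N_1}(x)$, base map $\check H = H$, centre $w = \tilde P(y) \bmod 1$ and parameter $\sigma_0 > 0$; property~(2) of that lemma ensures the stage-B composition at $w$ agrees with the action of $H$ there, and property~(3) provides strict derivative contraction around $w$ together with nested images of small neighbourhoods. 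In Stage C (steps $N_1+k, \ldots, M-1$), invoke Lemma~\ref{lem cancellation} with $\check H = H$, base equal to the post-stage-B image of $y$, and target $(F^M)_x(y)$, obtaining $q_0, \ldots, q_{N_2-1}$ whose lift composition pulls the $y$-orbit exactly to $(F^M)_x(y)$. The requirement that the pair $(\text{base}, \text{target})$ lie in $\Omega_{N_2}(F, \kappa_1)$ is met by taking $N_2$ large compared with $N_1 \epsilon_1$.

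Closeness of $p^{(1)}$ to $H$ (condition~(1)) and the equality $P^{(1)}(y) = P^{(0)}(y)$ follow from the three construction lemmas after choosing $\epsilon_1$ and $\sigma_0$ small enough. Once the strict inequality $P^{(1)}(z) > P^{(0)}(z)$ for $z \notin y + \Z$ is established, condition~(2) follows with a positive $\delta_{\epsilon, M}(\sigma)$ by a compactness argument: $P^{(1)} - P^{(0)}$ is continuous and $\Z$-periodic, vanishing only at $y + \Z$, so its infimum on the compact set $\{ z \in \R : z \notin (y - \sigma, y + \sigma) + \Z \}$ is strictly positive.

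The main obstacle is precisely the verification of the strict inequality $P^{(1)}(z) > P^{(0)}(z)$ for all $z \ne y \bmod 1$. After Stage~A, $\tilde P$ lies uniformly above $(F^{N_1})_x$ by $N_1 \Delta(\epsilon_1)$; the danger is that the stage-B contraction (which matches the unperturbed action only at the single point $w$) and the stage-C cancellation (which matches $(F^{N_2})_{g^{N_1+k}(x)}$ only along the single orbit of $y$) could introduce regions where $P^{(1)}$ dips back down to $(F^M)_x$. I expect the argument to exploit a careful interplay: the stage-B contraction locks the graph of $P^{(1)}$ so that $P^{(1)} - P^{(0)}$ acquires a parabolic tangency at $y$, while the uniform positive gap $N_1 \Delta(\epsilon_1)$ surviving from Stage~A keeps the difference strictly positive elsewhere. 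Balancing the contraction rate from Lemma~\ref{lem arnoldfamilyismovable} against $N_1 \Delta(\epsilon_1)$ and the magnitude of the stage-C correction is the technical heart of the proof.
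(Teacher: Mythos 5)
There is a genuine gap, and it sits exactly where you place ``the technical heart of the proof'': you never establish the strict inequality $P^{(1)}(z)>P^{(0)}(z)$ for $z\notin y+\Z$, and the mechanism you propose for it does not work. Your Stage~A lifts every orbit by at least $N_1\Delta(\epsilon_1)$, but Lemma~\ref{lem cancellation} (Stage~C) gives no pointwise lower bound whatsoever on the composition at points other than $y$: it only prescribes where $y$ is sent. To return $y$'s orbit exactly to $(F^M)_x(y)$, Stage~C must produce a downward correction calibrated to the orbit of $y$; since the available perturbations (in Example~\ref{exam II}) are fiberwise translations, this correction acts globally on the fiber, and its effect at another point $z$ is weighted by the derivative of the cocycle along $z$'s orbit rather than $y$'s. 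These derivatives are not comparable, so the downward correction at $z$ can exceed the Stage-A gain at $z$, and $P^{(1)}-P^{(0)}$ can become negative. A second, smaller issue: your compactness argument at the end produces a positive infimum for each \emph{fixed} $(x,y)$, whereas the lemma requires a single function $\delta_{\epsilon,M}$ uniform over all $(x,y)\in X\times\R$; one needs compactness in $(x,w,z)$ jointly, which in turn requires a construction that depends continuously (or at least uniformly) on the base point.

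The paper avoids the push-up/pull-down architecture entirely. For Example~\ref{exam II} it builds a one-parameter family $t\mapsto(p^{(1)}_i(t))$ with $p^{(1)}_i(0)=p^{(0)}_i$ in which each $3$-step block $(t,\,s+t,\,-c_x(w_0,s,t))$ fixes the orbit of $y$ \emph{exactly for every $t$} (via the correction function $c_x$ of \eqref{eq p -1}), while its $t$-derivative at any other point is $\geq 0$, vanishing only on a finite exceptional set $C_x$ (the minimizers of the real-analytic function $w\mapsto\partial_t c_x(w,s,t)|_{t=0}$). The interleaved $2$-step blocks from Lemma~\ref{lem moveonefixone} then push the images of the exceptional points off the exceptional sets of the subsequent blocks, so that after $M=5L$ steps the total $t$-derivative is strictly positive at every $u\neq w_0$; item~(2) follows by compactness of $X\times\T\times\T$ off a neighbourhood of $\{u=w_0\}$. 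In other words, the cancellation at $y$ is built into every stage rather than deferred to a final correction, which is precisely what makes the strict domination elsewhere provable. Your outline cannot be repaired without importing this idea (or some other genuinely local perturbation near $y$, which the Arnold family in Example~\ref{exam III}/\ref{exam II} does not provide).
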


\begin{proof}

This lemma is obvious if  $(\cH, d_\cH, \iota_\cH)$ is in Example \ref{exam I}, as we can use the projective action given by the parabolic elements in $SL(2, \R)$ to make perturbations.

Now we assume that $(\cH, d_\cH, \iota_\cH)$ is in Example \ref{exam II}.

Fix an arbitrary $x \in X$. We define a real analytic function $c_x :    \T \times \R^2 \to \R$ by
\aryst
c_x(y \mod 1, s, t) = F_{g^2(x)} \circ(F_{s+t})_{g(x)} \circ (F_t)_x(y) - F_{g^2(x)} \circ F_{g(x)} \circ F_x(y).
\earyst
By straightforward computation, we see that
\ary \label{eq p -1}
 (f_{- c_x(w, s, t)})_{g^2(x)}  \circ (f_{s + t})_{g(x)} \circ (f_t)_x(w)   =  f_{g^2(x)}  \circ f_{g(x)} \circ f_x(w).
\eary
By continuity, we have
\ary \label{eq cx 0}
\lim_{s \to 0} \sup_{x \in X} \sup_{w \in \T} |c_x(w, s, 0)| = 0.
\eary
Moreover, given any $w_0 \in \R$, we have 
\aryst
&& \partial_t \{ (f_{- c_x(w_0, s, t)})_{g^2(x)}  \circ (f_{s + t})_{g(x)} \circ (f_t)_x(w) \}|_{t = 0}  \\
&=& - \partial_t c_x(w_0, s, t)|_{t = 0} + \partial_t \{ f_{g^2(x)} \circ (f_{s + t})_{g(x)} \circ (f_t)_x(w) \}|_{t = 0} \\
&=&  - \partial_t c_x(w_0, s, t)|_{t = 0} + \partial_t \{ c_x(w, s, t) \}|_{t = 0}.
\earyst

Fix some $s \in (0, \epsilon / 2)$ sufficiently close to $0$ such that $\sup_{x \in X} \sup_{w \in \T} |c_x(w, s, 0)| < \epsilon / 2$, and  
the function $ \partial_t \{ c_x(w, s, t) \}|_{t = 0}$, as a non-constant real analytic function of $w$, reaches its minimum on a finite set $C_x \subset \T$.
Then
\aryst
 \partial_t \{ (f_{- c_x(w, s, t)})_{g^2(x)}  \circ (f_{s + t})_{g(x)} \circ (f_t)_x(w) \}|_{t = 0} \geq 0,
\earyst
and the equality holds if and only if $w \in C_x$. By compactness, there exists an integer $L > 0$ such that $|C_x| \leq L$ for every $x \in X$.

Define $M = 5L$. We will inductively construct $p^{(j)}_i$ for $j \in \{0,1 \}$ and $i \in \{ 0, \cdots, M-1 \}$ so that the conclusion of the lemma is satisfied.

Denote $C_x = \{ w_0, \cdots, w_{\ell - 1} \}$ where $\ell \leq L$.
We set $w_{0, i} = w_i$ for $0 \leq i \leq \ell-1$.

Given an integer $0 \leq m \leq \ell - 1$. Assume that  we have already constructed  $p_i$ for $i \in \{ 0, \cdots, 5m - 1 \}$.
We define 
\ary \label{eq p 0} 
&& (p_{5m}, p_{5m + 1}, p_{5m + 2})  \\
&=&  ( H( g^{5m}(x)), H( g^{5m + 1}(x) ), H( g^{5m+2}(x) ) ) + (0, s, -c_{g^{5m}(x)}(  (f^{5m})_x (w_0), s, 0)). \nonumber
\eary
By our choice of $s$, we have $(p_{5m}, p_{5m + 1}, p_{5m + 2}) \in \cB_{\cH}( H(g^{5m}(x)), \epsilon) \times  \cB_{\cH}( H(g^{5m + 1}(x)), \epsilon) \times  \cB_{\cH}( H(g^{5m + 2}(x)), \epsilon)$.
We define 
\aryst
z_{m, i} = \iota_{\cH}( ( p_{5m + k} )_{k = 0}^{2} )(w_{m, i}), \quad \forall 0 \leq i \leq \ell-1.
\earyst
By Lemma \ref{lem moveonefixone}, 
there exists $(p_{5m + 3}, p_{5m + 4}) \in \cB_{\cH}( H(g^{5m + 3}(x)), \epsilon) \times  \cB_{\cH}( H(g^{5m + 4}(x)), \epsilon)$ such that 
\ary
\iota_{\cH}((p_{5m + 3}, p_{5m + 4}))(z_{m, 0}) &=& \iota_{\cH}( (H(g^{5m + 3}(x)),  H(g^{5m + 4}(x)) ) )(z_{m, 0}), \label{eq p 1}  \\
\iota_{\cH}((p_{5m + 3}, p_{5m + 4}))(z_{m, m + 1}) &\not\in& C_{g^{5m + 5}(x)} \ \mbox{ if $m < \ell - 1$.} \label{eq p 2}
\eary
We define 
\aryst
w_{m+1, i} = \iota_{\cH}( (p_{5m + 3}, p_{5 m + 4}) )(z_{m, i}), \quad \forall 0 \leq i \leq \ell - 1.
\earyst

We set $p^{(0)}_{k} = p_{k}$ for every $0 \leq k \leq 5\ell -1$.
Let $t > 0$ be a small constant to be determined. For each $0 \leq m \leq \ell -1$, we define 
\aryst
& (p^{(1)}_{5m}, p^{(1)}_{5m + 1}, p^{(1)}_{5m + 2}) =  ( H( g^{5m}(x)), H( g^{5m + 1}(x) ), H( g^{5m+2}(x) ) ) + (t, s + t, -c_{g^{5m}(x)}(  (f^{5m})_x (w_0), s, t)),  \\
& (p^{(1)}_{5m + 3},   p^{(1)}_{5m + 4} ) = (p_{5m + 3},  p_{5m + 4}). 
\earyst
We set $p^{(0)}_{k} = p^{(1)}_{k} = H(g^{k}(x))$ for every $5 \ell \leq k \leq M - 1$.
By letting $t$ be sufficently close to $0$, we have $d_{\cal H}(p^{(j)}_k, H(g^{k}(x))) < \epsilon$ for $j \in \{0,1\}$ and $0 \leq k \leq M - 1$. 

Take an arbitrary $y \in \R$ such that $y \mod 1 = w_0$.
It is then straightforward to verify that Item (1) by \eqref{eq p 0} and \eqref{eq p -1}.
To verify Item (2), we take an arbitrary $u \in \T$ and denote $u_k = \iota_{\cH}( (  p_i )_{i = 0}^{k -  1} )(u)$ for each $0 \leq k \leq M-1$. 
Now we view $p^{(1)}_k$ for each $0 \leq k \leq M-1$ as a function of $t$.  Then we have
\aryst
&& \partial_t \{ \iota_{\cH}(  ( p^{(1)}_k )_{k = 0}^{M - 1} )(u) \}|_{t = 0} \\
&=& \sum_{ m = 0 }^{ \ell - 1} D\iota_{\cH}(  ( p_k )_{k = 5m + 3 }^{M-1} )( u_{5m + 3} )  \\
&& \cdot  \partial_t \{ (f_{- c_{ g^{5m}(x) }(   (f^{5m})_x (w_0) , s, t)})_{g^{5m + 2}(x)}  \circ (f_{s + t})_{g^{5m + 1}(x)} \circ (f_t)_{ g^{5m}(x) }(  u_{5m} ) \}|_{t = 0}.
\earyst
By construction, we see that for any $u \neq w_0$, there exists some $0 \leq m \leq \ell - 1$ such that $u_{5m} \notin C_{g^{5m}(x)}$. Consequently, we have 
\aryst
 \partial_t \{ \iota_{\cH}(  ( p^{(1)}_k )_{k = 0}^{M - 1} )(u) \}|_{t = 0} \geq 0
\earyst
with equaity if and only if $u = w_0$. Then it is straightforward to deduce Item (2) by compactness.
\end{proof}

Now we state the main observation for the proof of Lemma \ref{lem mainpropertymovableset}.
\begin{lemma}\label{lemma 67} For any $\epsilon > 0$, there exists a constant $N_2 = N_2(H,\epsilon) > 0$ such that the following is
true. For any $(x, y) \in X \times \mathbb{R}$, there exist integers $M_-, M_+$ satisfying $-N_2 \leq M_- \leq 0\leq M_+ \leq N_2$ such that for any $z_+,z_- \in \mathbb{R}$ with $|z_{+}- (F^{M_+})_x(y)|,|z_{-} - (F^{M_{-}})_x(y)|<2$,
there exist $(p_{M_{-}}, \cdots, p_{M_+-1} ) \in \cH^{M_+-M_-}$ and $y'\in (y,y+1)$ 
such that
\begin{enumerate}
    \item $d_{\cH}(p_i,H(g^i(x)))<\epsilon$ for any $M_- \leq i\leq M_{+}-1$;
    \item Denote by $P_i$ the unique lift of $\iota_{\cH}(p_i)$ close to $F_{g^{i}(x)}$.
    Then either we have
    \[
    \begin{array}{ccc}
      & P_{0} \circ  \cdots \circ P_{M_+ - 1}[y,y') \subset (z_+ -\epsilon, z_+ +\epsilon)\\
     \mbox{ and } & (P_{-1} \circ \cdots \circ P_{M_-} )^{-1}[y',y+1) \subset (z_- -\epsilon,z_- +\epsilon), 
    \end{array}
    \]
\end{enumerate}
or we have
\[
    \begin{array}{ccc}
      &  P_{0} \circ  \cdots \circ P_{M_+ - 1}[y',y+1) \subset (z_+ -\epsilon, z_+ +\epsilon)\\
      \mbox{ and }  & (P_{-1} \circ \cdots \circ P_{M_-} )^{-1}[y,y') \subset (z_- -\epsilon,z_- +\epsilon). 
    \end{array}
\]
 \end{lemma}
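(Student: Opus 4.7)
Since $H \notin \overline{\cal{ML}(\cH)}$, the criterion in Lemma \ref{claim equivdefmodelock} together with the monotonicity of $\rho$ in the shift parameter yields $\rho(F_{\epsilon_0}) > \rho(F) > \rho(F_{-\epsilon_0})$ for every sufficiently small $\epsilon_0 > 0$.  Hence Corollary \ref{LEMMA 56} and the cancellation lemma (Lemma \ref{lem cancellation}) are available with some constants $\kappa_1>0$ and $N_1 = N_1(H, \epsilon_0)$.  I would fix $N_2 > N_1$ sufficiently large and search among pairs $(M_+, M_-)$ with $1 \leq M_+, -M_- \leq N_2$.  Because the forward window $[0, M_+-1]$ and backward window $[M_-, -1]$ are disjoint, perturbations of $H$ on these two windows are independent; Lemma \ref{lem cancellation} applied separately on each window freely prescribes the forward image of $y$ (or $y+1$) at time $M_+$ and the backward preimage of $y$ (or $y+1$) at time $M_-$.

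The two scenarios in the conclusion correspond to the two possible choices of integer lift in each prescription.  In the ``either'' case I would use cancellation to force $P^+(y) = z_+$ (forward, with $P^+ = P_{M_+-1} \circ \cdots \circ P_0$) and $(P^-)^{-1}(y+1) = z_-$ (backward, with $P^- = P_{-1} \circ \cdots \circ P_{M_-}$); in the ``or'' case the roles of $y$ and $y+1$ are swapped.  The splitting point $y' \in (y, y+1)$ is then obtained from the intermediate value theorem applied to the monotone maps $u \mapsto P^+(u)$ and $u \mapsto (P^-)^{-1}(u)$.  Since each lifted fibre map commutes with $w \mapsto w+1$, we have $\int_y^{y+1} DP^+(u)\,du = \int_y^{y+1} D(P^-)^{-1}(u)\,du = 1$, so that the condition ``image of the selected half inside a radius-$\epsilon$ neighbourhood of the target'' reduces, e.g.\ in the ``either'' case, to the pair of integral inequalities $\int_y^{y'} DP^+ < 2\epsilon$ and $\int_{y'}^{y+1} D(P^-)^{-1} < 2\epsilon$.

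To realise these derivative constraints I would invoke the parabolic-type perturbations of Lemma \ref{lem arnoldfamilyismovable} and Lemma \ref{lem getparabolicelement}.  Inserted at an appropriate time along the orbit of $y$, a parabolic perturbation preserves the image of $y$ under $P^+$ (so the cancellation adjustment is not undone) but redistributes the derivative profile, concentrating $DP^+$ away from $y$; this shrinks the forward image of a small neighbourhood of $y$ by any desired factor.  The symmetric construction on the backward window concentrates $D(P^-)^{-1}$ away from $y+1$.  Combining these with the uniform linear separation furnished by Corollary \ref{cor uniformgrowth}, and summing the resulting length estimates, gives the quantitative bound required in conclusion (2).  Conclusion (1) follows from the fact that each individual perturbation (cancellation and parabolic) is $O(\epsilon)$-small in $\cH$.

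The main obstacle I expect to face is showing that one of the two scenarios is always realisable: attempting to favour the ``either'' configuration by concentrating the forward derivative away from $y$ may be incompatible with the backward concentration away from $y+1$.  My approach would be a dichotomy argument keyed to the integer lift of $z_\pm$ relative to the natural orbit point $(F^{M_\pm})_x(y)$.  For each pair of choices of integer lifts, exactly one pairing is dynamically feasible, and varying $M_\pm$ over $[1, N_2]$ together with the parabolic redistribution of Lemma \ref{lem getparabolicelement} should ensure that at least one scenario survives with the quantitative bound $< \epsilon$.  The proof concludes by taking $N_2(H, \epsilon)$ to be the maximum of all the constants generated by the various invocations of Lemmas \ref{lem cancellation}, \ref{lem arnoldfamilyismovable}, \ref{lem getparabolicelement}, and Corollary \ref{cor uniformgrowth}.
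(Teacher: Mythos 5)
Your proposal correctly identifies the shape of the conclusion (one half of $[y,y+1)$ must contract forward onto $z_+$, the complementary half backward onto $z_-$) and correctly isolates the crux: why is there a \emph{single} splitting point $y'$ compatible with both halves simultaneously? But you leave exactly that point unresolved --- you name it yourself as ``the main obstacle'' and offer only a hoped-for ``dichotomy argument keyed to the integer lift of $z_\pm$.'' That compatibility is the entire content of the lemma, and the intermediate value theorem applied separately to the forward and backward monotone maps does not produce it, because those maps live on disjoint time windows and there is no single function whose sign change you can exploit. Likewise, making Lemma \ref{lem cancellation} the primary tool inverts the actual structure of the argument: cancellation prescribes where one point goes, but by itself gives no control on the image of an entire half-interval.

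The missing idea is a drift-plus-connectedness mechanism. Lemma \ref{lem getparabolicelement} produces, on each block of length $M$ along the orbit of $y$, a perturbed block $\hat P$ that fixes the orbit of $y$ exactly and pushes every point at distance $\geq \epsilon_2$ (mod $\Z$) from that orbit strictly upward by a definite amount $\epsilon_1$ relative to the unperturbed block. Comparing with the $\epsilon'_1$-shifted cocycle and invoking Corollary \ref{cor uniformgrowth}, any point of $(y,y+1)$ that avoided the $\epsilon_2$-neighborhood of the orbit of $y$ for $m_1$ consecutive blocks would overtake the image of $y+1$, contradicting monotonicity (this is Claim \ref{claim soitcasoitca} in the paper). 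Hence every $z\in(y,y+1)$ enters $(F^{iM})_x(y)+(0,\epsilon_2)$ or $(F^{iM})_x(y)+(1-\epsilon_2,1)$ for some $i\le m_1$, and the same holds backward in time. The sets $U_-$, $U_+$ of points realizing one or the other alternative under the forward dynamics are open, connected (by monotonicity of the fibre maps) and cover the connected interval $(y,y+1)$, so they intersect; any $y'\in U_-\cap U_+$ approaches the orbit of $y$ from above \emph{and} from below forward in time, which is exactly the flexibility needed to match whichever side the backward dynamics selects for $y'$. By monotonicity the whole half-interval $[y,y')$ (resp.\ $[y',y+1)$) is then squeezed into an $\epsilon_2$-window around the orbit of $y$ (resp.\ $y+1$), and only at this final stage is Lemma \ref{lem cancellation} used, on $m_0$ extra steps, to steer the squeezed window onto $z_\pm$ within $\epsilon$. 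Without this argument your proof has no way to produce $y'$, so as written it has a genuine gap at its central step.
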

 \begin{proof}

  Without loss of generality, we assume that $\epsilon\in  (0, \min(\kappa(H), 1)/2)$ where
 $\kappa(H)$ is given by  Lemma \ref{lem uniformgrowth}. 

  We let $\epsilon_0 = \epsilon_0(H, \epsilon/2)$ be given by Lemma \ref{lem cancellation}.
   Let 
\ary
     m_0 &=& \lceil 3 \kappa_1(f,  \epsilon_0)^{-1}N_1(f, \epsilon_0) \rceil +1,\label{equation (8.5.2)}  \\ 
 \epsilon_2 &=& (\Vert f \Vert_{C^{0,1}} + 1)^{-m_0}\epsilon  \label{equation (8.5.3)}
\eary
where $\kappa_1$ and $N_1$ are given by Corollary \ref{LEMMA 56}.

Let $M = M(H, \epsilon)$ be given by Lemma \ref{lem getparabolicelement}.
Given $x \in X$ and $y \in \R$, we apply Lemma \ref{lem getparabolicelement} to obtain
$(p^{(j)}_i)_{i=0}^{M-1} \in {\cal H}^{M}$ for $j \in \{ 0,1 \}$ and $0 \leq i \leq M-1$. We denote
$(p^{(j)}_i)_{i=0}^{M-1}$ as $(p^{(j)}_{x, y, i})_{i=0}^{M-1}$ to indicate the dependence on $x$ and $y$.

By Lemma \ref{lem getparabolicelement}, we define
\ary
\epsilon_1  &=& 
\delta_{\epsilon, M}( \epsilon_2) > 0, \\
\epsilon'_1 &=&M^{-1}( \Vert f\Vert_{C^{0,1}} + 1)^{-M} \epsilon_1.
\eary
We have for every $x \in X$, $y \in \R$ and $y' \not\in \mathbb{Z} + y + (-\epsilon_2,\epsilon_2)$ that
\ary
 P^{(1)}_{x, y, M - 1} \circ \cdots \circ P^{(1)}_{x, y, 0}(y') - P^{(0)}_{x, y, M - 1} \circ \cdots \circ P^{(0)}_{x, y, 0}(y') >  \epsilon_1.   \label{equation (8.5.6)} 
\eary
We let
\ary
   m_1 =   3 \lceil \Delta(\epsilon'_1)^{-1} \rceil K(\epsilon'_1) \label{equation (8.5.7)}
\eary
where functions $\Delta, K$ are given by Lemma \ref{lem uniformgrowth}.

We fix some $x \in X$ and $y \in \R$ from now on.
We define
\ary \label{equation (8.5.8)} \\
    \hat{p}_{kM + i} = p^{(1)}_{g^{kM}(x), (F^{kM})_x(y), i}, \  \check{p}_{kM + i} = p^{(0)}_{g^{kM}(x), (F^{kM})_x(y), i}, \quad \forall k \in \Z, 0 \leq i \leq M-1.   \nonumber
\eary

By Lemma \ref{lem getparabolicelement}, we have $d_{\cH}( \hat{p}_i, H(g^i(x)) ), d_{\cH}( \check{p}_i, H(g^i(x)) ) <\epsilon$ for all $i\in\mathbb{Z}$.
We denote by $\hat{P}_i$, resp. $\check{P}_i$,  the unique lift of $\hat{p}_i$, resp. $\check{p}_i$, close to $F_{g^{i}(x)}$.
 By (\ref{equation (8.5.8)}) and Lemma \ref{lem getparabolicelement}(1), 
 it is direct to verify that for any $i \geq 0, \hat{P}_{i M -1} \cdots \hat{P}_0(y) = \check{P}_{i M -1} \cdots \check{P}_0(y) = (F^{i M})_{x}(y)$; 
 and for any $i < 0, \hat{P}^{-1}_{i M} \cdots \hat{P}^{-1}_{-1}(y) = \check{P}^{-1}_{i M}, \cdots \check{P}^{-1}_{-1}(y) = (F^{i M})_x(y)$.

Define $$N_2 = m_1 M + m_0.$$
We have the following claim.
\begin{claim} \label{claim soitcasoitca}
For any $z\in(y,y+1)$, there exists $0\leq i\leq m_1$ such that $$\hat{P}_{iM-1}\cdots \hat{P}_0(z)\in (F^{iM})_x(y)+(0,\epsilon_2) \cup (1-\epsilon_2,1).$$
Similarly, for any $z\in (y,y+1)$, there exists $0\leq i\leq m_1$ such that $$\hat{P}^{-1}_{-iM}\cdots \hat{P}^{-1}_{-1}(z) \in (F^{-iM})_x(y)+  (0,\epsilon_2)\cup (1-\epsilon_2,1).  $$
\end{claim}
\begin{proof} 
 We will only detail the first statement, since the second one follows from a similar argument. If the first statement is false, let $z_i = \hat{P}_{iM -1}\cdots \hat{P}_0(z)$ for all $0 \leq i \leq m_1$, then   we would have
\aryst
z_i \not\in\mathbb{Z}+ (F^{i M})_x(y)+ (-\epsilon_2,\epsilon_2), \ \forall 0\leq i\leq m_1.
\earyst
Then by  (\ref{equation (8.5.6)}), we have
\ary
  \hat{P}_{(i+1)M - 1} \circ \cdots \circ  \hat{P}_{iM}(z_i) &>&   \check{P}_{(i+1)M - 1} \circ \cdots \circ  \check{P}_{iM}(z_i) + \epsilon_1 \nonumber \\
  & \geq &  (\check{P}_{(i+1)M - 1})_{\epsilon'_1} \circ \cdots \circ  (\check{P}_{iM})_{\epsilon'_1}(z_i). \label{equation (8.5.9)}
\eary
By Corollary \ref{cor uniformgrowth},  (\ref{equation (8.5.7)})  and (\ref{equation (8.5.9)}), we would have
\aryst
&&     \hat{P}_{m_1 M -1}\cdots \hat{P}_0(y+1)\\ 
&\geq& \hat{P}_{m_1 M -1} \cdots \hat{P}_0(z) \geq  (\check{P}_{m_1M - 1})_{\epsilon'_1} \circ \cdots \circ  (\check{P}_{0})_{\epsilon'_1}(y) >  (F^{m_1 M})_x(y)+2.
\earyst
This is a contradiction.
\end{proof}
 
Let
\begin{align*}
    U_- &=\{z\in (y,y+1)\mid \mbox{there exists} \ 0\leq i\leq m_1 \mbox{ such that } \hat{P}_{iM-1}\cdots \hat{P}_0(z)\in (F^{iM})_x(y)+(0,\epsilon_2)\}, \\
    U_+ &=\{z\in (y,y+1)\mid \mbox{there exists} \  0\leq i\leq m_1 \mbox{ such that } \hat{P}_{iM-1}\cdots \hat{P}_0(z)\in (F^{iM})_x(y)+(1-\epsilon_2,1)\}.
\end{align*}
Then by our claim, we have
$$U_- \cup U_+ = (y,y+1).$$
It is direct to see that $U_-, U_+$ are both non-empty connected open sets. Since $(y, y + 1)$ is connected, we conclude that $U_-  \cap U_+\neq\emptyset$. We take $y'$ to be an arbitrary element in $U_- \cap   U_+$.

Again by Claim \ref{claim soitcasoitca}, there exists $0 \leq n'_- \leq m_1$ such that for $N'_- = M n'_-$ we have  $$\hat{P}^{-1}_{-N'_{-}} \cdots \hat{P}^{-1}_{-1}(y')\in (F^{-N'_{-}})_x(y)+(0,\epsilon_2)\cup(1-\epsilon_2,1).$$ Without loss of generality, we may assume that
$$\hat{P}^{-1}_{-N'_{-}} \cdots \hat{P}^{-1}_{-1}(y')\in (F^{-N'_{-}})_x(y)+(0,\epsilon_2),$$
 as the other case can be dealt with by a similar argument. 
Then we have
  $$\hat{P}^{-1}_{-N'_{-}} \cdots \hat{P}^{-1}_{-1}[y,y')\subset (F^{-N'_{-}})_x(y)+[0,\epsilon_2).$$

By $y' \in U_+$, we see that  there exists $n'_+ \in \{0, \cdots,  m_1 \}$ such that for $N'_+ = M n'_+$ we have
\aryst
\hat{P}_{N'_{+}-1} \cdots \hat{P}_0(y') \in (F^{N'_+})_x(y)+(1-\epsilon_2,1).
\earyst
 Hence 
    \begin{equation}
    \hat{P}_{N'_{+}-1} \cdots \hat{P}_0[y',y+1)\subset (F^{N'_+})_x(y)+(1-\epsilon_2,1).\label{equation (8.5.10)}
    \end{equation}
We  set
$$M_+=N'_+ +m_0 \ \mbox{ and } \ p_i = \hat{p}_i,  \ P_i=\hat{P}_i, \ \forall 0 \leq i \leq N'_+ -1.$$
By  Lemma \ref{lem cancellation}, (\ref{equation (8.5.2)}),  and the fact that
$$(F^{m_0})_{g^{N'_+}(x)}\hat{P}_{N'_{+}-1} \cdots \hat{P}_0(y+1) = (F^{M_+})_x(y+1)\in z_+ +(-3,3),$$
there exists $(p_{N'_+}, \cdots, p_{M_+ -1} )\in \cH^{m_0}$ such that  
\begin{enumerate}
    \item $d_{\cH}(p_{N'_+ +i}, H(g^{N'_+ +i}(x)) )<\epsilon$ for any $0\leq i\leq m_0-1$,
    \item $P_{M_+ -1}\cdots P_{N'_+} ( (F^{N'_+})_x (y)+1)=z_+$ 
\end{enumerate}
where $P_i$ is a  lift of $\iota_{\cal H}(p_i)$ close to $F_{g^{i}(x)}$ for each $N'_+ \leq i \leq M_+ - 1$.
Then by (\ref{equation (8.5.10)}) and  (\ref{equation (8.5.3)}),  we have
\aryst
    P_{M_+-1}\cdots P_0[y',y+1) &\subset& P_{M_+-1}\cdots P_{N'_+}( (F^{N'_+})_x(y)+(1-\epsilon_2,1])\\
    &\subset& (z_+-\epsilon, z_++\epsilon).
\earyst

 By a similar method, we may set
 \aryst
 M_- =  - N'_- - m_0,
 \earyst
 and define $(p_{M_-}, \cdots p_{-1})$ and a lift $P_i$ of $\iota_{\cal H}(p_i)$ for each $M_- \leq i \leq - 1$. It is then direct to verify (1) and (2).
\end{proof}

We also need the following two lemmata. 
In the following, we denote by $y_i$ a point in $\R$, and denote by $\bar{y}_i = y_i \mod \Z \in \T$.
\begin{lemma}\label{lemma 68}  
 For any $\epsilon, \eta > 0$, there exist $\epsilon_3 = \epsilon_3(H,\epsilon) > 0$, $r_1 = r_1(H,\epsilon, \eta) > 0$,
$N_3 = N_3(H , \epsilon) > 0$ such that for any integer $N \geq N_3$ the following is true:

(\textit{Forward contraction}) For any $x\in X$, there exist $y_1\in\mathbb{R}$ and $(q_0,\cdots, q_{N-1})\in \cH^N$ such that
\begin{enumerate}
\item[(1)] $d_{\cH}(q_{i}, H(g^{i}(x)))<\epsilon$ for every $0\leq i\leq N-1$, 
\item[(2)] 
 $D(\iota_{\cH}((q_j)_{j=0}^{i-1}))(\bar{y}_1)<e^{-2\epsilon_3i}$ for every $N_3 \leq i\leq N$,
\item[(3)]  $\frac{D\iota_{\cH}((q_j)_{j=0}^{i -1})(\bar{y}') }{D\iota_{\cH}((q_j)_{j=0}^{i -1})(\bar{y}_1) } \in (e^{- \min(\epsilon_3, \eta) i}, e^{ \min(\epsilon_3, \eta) i})$ for every $0 \leq i \leq N$ and every $\bar{y}'\in  (\bar{y}_1- r_1, \bar{y}_1 + r_1)$, 
\item[(4)] $\iota_{\cH}((q_j)_{j=0}^{N -1})(\bar{y}_1)= (f^{N})_x(\bar{y}_1)$;
\end{enumerate}

(\textit{Backward contraction}) For any $x \in X$, there exist $y_2 \in \R$ and $(q_{-N}, \cdots, q_{-1}) \in  \cH^N$ such that 
\begin{enumerate}
\item[(1')] $d_{\cH}(q_{-i}, H(g^{-i}(x)))<\epsilon$ for every $1\leq i\leq N$,
\item[(2')] $D(\iota_{\cH}((q_j)_{j=-i}^{-1})^{-1})(\bar{y}_2) <e^{-2\epsilon_3i}$ for every $N_3 \leq i\leq N$,
\item[(3')] $\frac{D( \iota_{\cH}((q_j)_{j= -i}^{-1})^{-1} )(\bar{y}') }{D( \iota_{\cH}((q_j)_{j= -i}^{-1})^{-1}) (\bar{y}_2) }  \in (e^{-\min(\epsilon_3, \eta) i}, e^{\min(\epsilon_3, \eta) i})$ for every $0 \leq i \leq N$ and every $\bar{y}'\in  (\bar{y}_2- r_1, \bar{y}_2 + r_1)$, 
\item[(4')] $\iota_{\cH}((q_j)_{j=0}^{N -1})(\bar{y}_2)= (f^{N})_x(\bar{y}_2)$.
\end{enumerate}
\end{lemma}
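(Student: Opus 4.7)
The plan is to establish forward contraction by iteratively invoking Lemma \ref{lemma 67} over many consecutive blocks of length at most $N_2$, and then close up the endpoint by a short tail of length $m_0$ using Lemma \ref{lem cancellation}; the backward statement will follow from an entirely analogous construction run on the inverse dynamics. More precisely, set $N_2 = N_2(H, \epsilon')$ for a suitable $\epsilon' \ll \epsilon$, and partition the index window $[0, N - m_0)$ into $K \sim (N - m_0)/N_2$ consecutive blocks $B_k = [M_k, M_{k+1})$ of length at most $N_2$. On block $B_k$ we apply Lemma \ref{lemma 67} to the pair $(g^{M_k}(x), \hat y^{(k)})$, where $\hat y^{(k)}$ is the current lifted position of $\bar y_1$ after the blocks constructed so far, producing a perturbation $(q_i)_{i \in B_k}$ with $d_{\cH}(q_i, H(g^i(x))) < \epsilon$, a splitting point $y'^{(k)}$, and the strong contraction property that one of the two half-intervals determined by $y'^{(k)}$ is mapped by the composition $\iota_\cH((q_i)_{i \in B_k})$ into an $\epsilon'$-neighborhood of a chosen target $z_+^{(k)}$.

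At each stage choose $z_+^{(k)} = (f^{M_{k+1}})_x(y_1)$, the natural $f$-image of $y_1$, and let $y_1$ itself be selected by a nested-intersection (Cantor-type) construction: at the $k$-th step we restrict to the half-interval determined by $y'^{(k)}$ that gets contracted forward. Since each restriction yields a non-empty compact subinterval of positive length, the intersection is non-empty, and any $y_1$ in it has the property that its successive images under the concatenated composition land within $\epsilon'$ of $(f^{M_{k+1}})_x(y_1)$. Because a unit interval is compressed to one of length $\leq 2\epsilon'$ in at most $N_2$ steps, by the mean-value theorem the derivative of the block composition at $\bar y_1$ is bounded by a multiple of $\epsilon'$; choosing $\epsilon_3$ of order $(\log(1/\epsilon'))/(4 N_2)$ then yields the exponential contraction $D\iota_\cH((q_j)_{j=0}^{i-1})(\bar y_1) < e^{-2\epsilon_3 i}$ of condition (2) for all $i \geq N_3$. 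The bounded distortion condition (3) is obtained by exploiting that every $q_i$ lies in an $\epsilon$-ball of $H(g^i(x))$, so $D\iota_\cH(q_i)$ is uniformly H\"older in its circle argument with a modulus depending only on $H$ and $\epsilon$; choosing $r_1 = r_1(H, \epsilon, \eta)$ exponentially small in $\eta$ and the overall length $N$ then keeps the distortion ratio within $(e^{-\min(\epsilon_3, \eta) i}, e^{\min(\epsilon_3, \eta) i})$ at every step.

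After the $K$ contracting blocks, the image of $\bar y_1$ under the composition is within $O(\epsilon')$ of $(f^{M_K})_x(\bar y_1)$, hence within the $\kappa_1$-tube required to apply Lemma \ref{lem cancellation} on the last $m_0$ indices. Using that lemma we replace $(q_i)_{i = N - m_0}^{N-1}$ by a corrected tail so that the full composition sends $\bar y_1$ exactly to $(f^N)_x(\bar y_1)$, giving condition (4), while the tail perturbations remain within $2\epsilon'$ of $H(g^i(x))$, which we arrange to be $\leq \epsilon$. The backward-contraction statement is proved identically by running the same scheme on blocks in $[-N, 0)$ and using the second halves of Lemmas \ref{lemma 67} and \ref{lem cancellation}, noting that the hypotheses $L_+(H) > L_-(H)$ and $H \notin \overline{\mathcal{ML}(\cH)}$ are invariant under time-reversal of the forced dynamics.

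The main obstacle is the compatibility of the three conclusions (2), (3), and (4): the endpoint matching (4) must be achieved via Lemma \ref{lem cancellation} without disturbing the exponential contraction established in (2), while the bounded distortion (3) caps how strong each single-block perturbation can be when measured on the neighborhood $(\bar y_1 - r_1, \bar y_1 + r_1)$. This forces a careful balancing act: $\epsilon_3$ must be taken much smaller than the nominal contraction rate $(\log(1/\epsilon'))/N_2$ to absorb both the $\eta$-distortion and the $m_0$-step correction, and $r_1$ must be chosen after $\epsilon_3$ and $N$ so that the final cancellation step does not spread the tight image window beyond the prescribed distortion budget.
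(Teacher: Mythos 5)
Your proposal takes a different route from the paper (iterating Lemma \ref{lemma 67} plus a cancellation tail, rather than the paper's use of the contractable-perturbation map $E$ from Lemma \ref{lem arnoldfamilyismovable}), but it has genuine gaps. The central one is that your mechanism cannot produce the pointwise derivative bound (2) at the distinguished point $\bar y_1$. Lemma \ref{lemma 67} contracts a half-interval into an $\epsilon'$-ball; the mean value theorem then only yields a point \emph{somewhere} in that interval where the block derivative is small, not at $\bar y_1$. You try to recover a pointwise bound via the distortion estimate (3), but in this scheme (3) and (2) are mutually dependent and neither can be bootstrapped first. Moreover, your nested-intersection definition of $y_1$ is circular as written: the perturbation on block $B_k$ is keyed to $\hat y^{(k)}$, ``the current lifted position of $\bar y_1$,'' yet $\bar y_1$ only exists after all blocks have been constructed. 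The paper avoids both problems at once: it first selects $\bar y_1$ \emph{a priori}, using Markov's inequality applied to the identity $\int_\T D(f^n)_x(w)\,dw = 1$, as a point where $D(f^n)_x(\bar y_1)$ grows at most like $e^{2n\epsilon_3}$ for \emph{all} $n \ge N_3$; it then applies $E(\sigma_0,\cdot)$ along the orbit of $\bar y_1$ in blocks of length $k$, which by Lemma \ref{lem arnoldfamilyismovable}(3) gives a definite pointwise gain $e^{-\epsilon_2}$ per block at that orbit while, by Lemma \ref{lem arnoldfamilyismovable}(2), fixing the orbit of $\bar y_1$ exactly — so (4) holds with no cancellation tail, and (2) follows because the per-block gain beats the sub-exponential a priori growth. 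Your construction contains no analogue of this measure-theoretic selection of $\bar y_1$, and without it even a per-block contraction at the right point would not yield (2) for all intermediate times $N_3 \le i \le N$.

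A second, independent defect: the statement requires $r_1 = r_1(H,\epsilon,\eta)$ to be uniform in $N$ (this matters in the proof of Lemma \ref{lem mainpropertymovableset}, where $r_0$ is fixed before $N$ is taken large), whereas you choose $r_1$ ``exponentially small in $\eta$ and the overall length $N$.'' In the paper, $r_1$ only has to control the first $N_3$ steps (with $N_3 = N_3(H,\epsilon)$); afterwards the established exponential contraction keeps the $r_1$-neighborhood inside the region where the distortion estimates \eqref{formula 8.6.3'}--\eqref{formula 8.6.3} apply, so the distortion sum converges geometrically, independently of $N$. Finally, note that your use of Lemma \ref{lemma 67} imports the hypothesis $H \notin \overline{\mathcal{ML}(\cH)}$ into a place where the paper's argument needs only contractability and the elementary identity \eqref{(8.3.1)}; this is permissible under the standing hypotheses of Section \ref{sec Perturbation lemmata} but is an unnecessary dependence.
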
 
\begin{proof}

We will detail the proof of the case (Forward contraction). The other case follows from a similar argument.

We fix a small constant  $\sigma_0 > 0$ such that for every $x \in X$ and $w \in \T$ we have
\aryst
E(\sigma_0, w, x, H) \in \prod_{i=0}^{k-1} \cB_{\cal H}(H(g^{i}(x)), \epsilon),
\earyst
where the map $E$ is given by  Lemma \ref{lem arnoldfamilyismovable}.
We let $r_0, \epsilon_2$ be given by Lemma \ref{lem arnoldfamilyismovable}(3) for $\sigma_0$.

Fix an arbitrary $x \in X$. 
For any integer $n\geq 1$, we define
$$ A_{n,\epsilon_2} = \{ w  \in \mathbb{T}\mid D(f^n)_x(w)> 1000 k \epsilon_2^{-1} e^{n\epsilon_2/(100k)}\}.$$
By identity (\ref{(8.3.1)}) and Markov’s inequality, we have
$$| \bigcup_{n\geq 1}A_{n,\epsilon_2}| \leq \Sigma_{n\geq 1} |A_{n,\epsilon_2}|<\Sigma_{n\geq 1} \frac{\epsilon_2}{1000 k}e^{-n\epsilon_2/(100k)}<1.$$
 We fix an arbitrary $\bar{y}_1\in\mathbb{T}\setminus(\cup_{n\geq 1} A_{n,\epsilon_2})$. 
 
 We  let $N_3 > 0$ be a large integer to be determined depending only on $H$ and $\epsilon$,
 and let
 \ary
 \epsilon_3 = \epsilon_2/(100 k). \label{eq epsilon3}
  \eary
By the choice of $\bar{y}_1$, and by letting $N_3$ be sufficiently large, we have
\begin{equation}
D(f^n)_x(\bar{y}_1)\leq 1000 k \epsilon_2^{-1}e^{n\epsilon_2/(100k)} < e^{n\epsilon_2/(50k)} = e^{2n\epsilon_3},  \ \forall n\geq  N_3.
 \label{formula 8.6.4}
\end{equation}
For any $N\geq  N_3$, we define
\begin{equation}
(q_{ik+j} )_{j=0}^{k-1} = E(\sigma_0, (f^{ik})_x(\bar{y}_1), g^{ik}(x), H), \  \forall 0 \leq i \leq \lfloor (N - 1)/k \rfloor - 1
 \label{formula 8.6.5}
\end{equation}
and define 
\aryst
q_j = H(g^{j}(x)), \ \forall k \lfloor (N - 1)/k \rfloor  \leq j < N.
\earyst
Then item (1) follows from our choice of $\sigma_0$. It is direct to verify (4) by Lemma \ref{lem arnoldfamilyismovable}(2).

We let $r'>0$ be a small constant depending only on $H, {\cal H}, k$ and $\epsilon$ such that 
\begin{equation}
\frac{D(f^{k})_{x'}(\bar y'')}{D(f^{k})_{x'}(\bar y')} < e^{\min( \epsilon_3, \eta) / 2}, \ \forall x' \in X, |\bar{y}'- \bar{y}''| < 2r'
 \label{formula 8.6.3'}
\end{equation} 
and 
\ary
 \frac{DE(\sigma_0, w, x', H)(\bar y'')}{DE(\sigma_0, w, x', H)(\bar y')} < e^{\min( \epsilon_3, \eta) / 2}, \ \forall x' \in X,  |\bar y'- \bar y''| < 2r'.
 \label{formula 8.6.3}
\eary

By \eqref{eq epsilon3}, \eqref{formula 8.6.4}, \eqref{formula 8.6.5}  and Lemma \ref{lem arnoldfamilyismovable}(3),   we have
\begin{equation}
D(\iota_{\cH}((q_i)_{i=0}^{n-1}))(\bar{y}_1)< e^{- n\epsilon_2/2k} = e^{ - 50 n \epsilon_3}, \ \forall N_3 \leq n \leq N.
 \label{formula 8.6.6}
\end{equation}
This proves (2). We choose $r_1 = r_1(H, \epsilon, \eta) \in ( 0, r')$ to be sufficiently small, so that we have
$$\iota_{\cH}((q_i)_{i=0}^{n-1})(\bar{y}_1-r_1, \bar{y}_1 +r_1)\subset (f^n)_x(\bar{y}_1) + (-r',r'), \ \forall 0\leq n\leq N_3.$$ 
 By (\ref{formula 8.6.3}), (\ref{formula 8.6.6}) and a simple induction, we obtain (3).
\end{proof}

\begin{lemma}\label{lemma 69}
  For any $\eta >0$, there exists $r_2 =r_2(H,\eta)>0$ such that for any integer $N\geq 1$ the
following is true:

(Forward expansion) For any $x\in X$, there exists $\bar{y}_3 \in \mathbb{T}$ such that $D(f^N)_x(\bar{y}') >
e^{-\eta N}$ for any $\bar{y}' \in (\bar{y}_3 - r_2\Vert f\Vert^{-N}_{C^{0,1}}, \bar{y}_3+r_2\Vert f\Vert^{-N}_{C^{0,1}})$,

(Backward expansion) For any  $x\in X$, there exists $\bar{y}_4 \in \mathbb{T}$ such that $D(f^{-N})_x(\bar{y}') >
e^{-\eta N}$ for any $\bar{y}'\in (\bar{y}_4 - r_2\Vert f\Vert^{-N}_{C^{0,1}}, \bar{y}_4+r_2\Vert f\Vert^{-N}_{C^{0,1}}).$
\end{lemma}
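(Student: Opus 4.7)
\medskip

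\noindent\textbf{Proof proposal for Lemma \ref{lemma 69}.}

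The plan is to combine the integral identity \eqref{(8.3.1)}, which produces points of large derivative, with a standard tracking/distortion argument that propagates a large derivative estimate from one point to a small interval around it. Let us write $M = \|f\|_{C^{0,1}}$. Since $f \in \Diff^{0,1}_g(X \times \T)$, the map $\log Df : X \times \T \to \R$ (viewed as $(x, w) \mapsto \log Df_x(w)$) is continuous on a compact space and hence uniformly continuous; fix a modulus of continuity $\omega$, i.e. a non-decreasing continuous function with $\omega(0)=0$ such that $|\log Df_x(w) - \log Df_x(w')| \leq \omega(|w - w'|)$ uniformly in $x \in X$. I would then define $r_2 = r_2(H, \eta) > 0$ to be so small that $\omega(r_2/M) \leq \eta$.

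For the forward case, fix $x \in X$ and $N \geq 1$. By \eqref{(8.3.1)} we have $\int_\T D(f^N)_x(w)\,dw = 1$, so there exists $\bar y_3 \in \T$ with $D(f^N)_x(\bar y_3) \geq 1$. I claim this $\bar y_3$ works. Indeed, for $\bar y' \in (\bar y_3 - r_2 M^{-N}, \bar y_3 + r_2 M^{-N})$, the Lipschitz bound on fiber maps yields
\[
|(f^i)_x(\bar y') - (f^i)_x(\bar y_3)| \leq M^i \cdot r_2 M^{-N} = r_2 M^{i-N} \leq r_2/M, \quad \forall\, 0 \leq i \leq N-1.
\]
By the chain rule $\log D(f^N)_x(\bar y) = \sum_{i=0}^{N-1} \log Df_{g^i(x)}((f^i)_x(\bar y))$ and the choice of $\omega$,
\[
\bigl|\log D(f^N)_x(\bar y') - \log D(f^N)_x(\bar y_3)\bigr| \leq \sum_{i=0}^{N-1} \omega(r_2 M^{i-N}) \leq N\,\omega(r_2/M) \leq \eta N.
\]
Combined with $\log D(f^N)_x(\bar y_3) \geq 0$, this gives $D(f^N)_x(\bar y') \geq e^{-\eta N}$.

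For the backward case I would apply the same argument to the inverse dynamical system: set $h = f^{-1}$, which is a $g^{-1}$-forced circle diffeomorphism satisfying $\|h\|_{C^{0,1}} = \|f\|_{C^{0,1}} = M$ and $(h^N)_x = (f^{-N})_x$; the integral identity $\int_\T D(h^N)_x(w)\,dw = 1$ still holds, so the identical argument, with $\log Dh$ in place of $\log Df$ (and its modulus of continuity, which can be absorbed into $\omega$), produces the desired $\bar y_4$. There is no substantial obstacle here: the only nontrivial point is the observation that the radius $r_2 M^{-N}$ is chosen precisely so that every intermediate orbit difference $|(f^i)_x(\bar y') - (f^i)_x(\bar y_3)|$ is bounded by the fixed small quantity $r_2/M$, making the modulus of continuity $\omega$ applicable term by term with a uniform bound independent of $N$.
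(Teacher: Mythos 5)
Your proposal is correct and follows essentially the same route as the paper: the paper's (very terse) proof also picks $\bar y_3$ via the integral identity \eqref{(8.3.1)} so that $D(f^N)_x(\bar y_3)=1$ and then invokes continuity for a sufficiently small $r_2$, which is exactly the modulus-of-continuity/distortion step you spell out, with the radius $r_2\Vert f\Vert_{C^{0,1}}^{-N}$ chosen so that all intermediate iterates stay $r_2/M$-close. (Only cosmetic point: take $\omega(r_2/M)<\eta$ strictly, or $\le \eta/2$, to get the strict inequality claimed in the statement.)
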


\begin{proof}
By (\ref{(8.3.1)}), we can choose $\bar{y}_3 \in\mathbb{R}$ so that $D(f^N)_x(\bar{y}_3) = 1$. Then by letting $r_2$ be sufficiently small, and by continuity, we can verify (Forward expansion). The proof of (Backward expansion) is similar.
\end{proof}

\begin{proof}[Proof of Lemma \ref{lem mainpropertymovableset}]
As before we denote $f = \Phi(H)$.
We set
\aryst
D= \Vert f\Vert_{C^{0,1}} + 3 \ \mbox{(so that $\log D \geq 1$)}, \quad  \eta =  (\frac{L_+(f)-L_-(f)}{100 \log D })^2 > 0.
\earyst
We denote $n_1 =  N'_0(H, \eta) > 0$ where $N'_0$ is given by in Lemma \ref{LEMMA limitLE}. Then for any $n\geq   n_1$, we have
\ary \label{8.6.7}
e^{(L_-(f)-\eta)n} <  D(f^n)_x( w ) < e^{(L_+(f)+\eta)n}, \ \forall (x, w)\in X\times\mathbb{T}. 
\eary
By continuity and subadditivity, it is direct to see that there exists $\epsilon'=\epsilon'(H, \eta)>0$ such that  for any
$n\geq n_1$, for any $(h_0,\cdots, h_{n-1})\in \cH^n$ satisfying $d_{\cH}(h_i,H(g^i(x)))<\epsilon'$ for all $0\leq i\leq n-1$, we have
\aryst
 e^{(L_-(f)-2\eta)n} <  D(\iota_{\cH}((h_i)_{i=0}^{n-1}))(w) < e^{(L_+(f)+2\eta)n}, \ \forall (x, w) \in X \times \mathbb{T}. 
 \earyst
Without loss of generality, we can assume that $\epsilon\in (0, \epsilon')$.
We let $\epsilon_3 =\epsilon_3(H,\epsilon), r_1 =r_1(H,\epsilon, \eta), N_3 =N_3(H,\epsilon)$ be given by Lemma \ref{lemma 68}.  
We let $r_2 = r_2(H, \eta) > 0$ be given by Lemma \ref{lemma 69}. Denote
\aryst
r_0 = r_0(H, \epsilon) =  \min( r_1, r_2,\epsilon)/2.
\earyst
Let $N_2 = N_2(H, r_0)$ be given by Lemma \ref{lemma 67}. It is clear that ultimately $N_2$ depends only on $H$ and $\epsilon$.

We let $\epsilon_0 = \epsilon_0(H, \epsilon/2)$ be given by Lemma \ref{lem cancellation}.
Define
\ary 
m_2 = \lceil 2\kappa_1(\Phi(H), \epsilon_0)^{-1}N_1(\Phi(H), \epsilon_0)\rceil+1   \label{eq m2} 
\eary
where $\kappa_1, N_1$ are given Lemma \ref{lem cancellation}.

We let $N_4 > 0$ be a large integer to be determined depending only on $H$ and  $\epsilon$.
Take an arbitrary integer $N>N_4$, we set
$$x_0=g^{\lceil \frac{N}{2}\rceil}(x).$$
We apply Lemma \ref{lemma 67} for $(r_0, x_0, 0)$ in place of $(\epsilon, x, y)$ to obtain $(M_1, M_2)$ in place of $(M_+,M_-)$. We have $-N_2 \leq M_2 \leq 0\leq M_1 \leq N_2$.

By letting $N_4$ be sufficiently large, we have $N -\lceil \frac{N}{2}\rceil - M_1>N_3$.   Then by Lemma \ref{lemma 68} (Forward  
 contraction) for $(g^{M_1}(x_0),N-\lceil\frac{N}{2} \rceil-M_1)$ in place of $(x,N)$, we obtain  $\bar{y}_1\in \T$ 
 and $\tilde{p}_0,\cdots,\tilde{p}_{N-\lceil\frac{N}{2} \rceil-M_1-1} \in \cH$, such that

 \begin{itemize}
     \item[g1.] $d_{\cH}(\tilde{p}_i,H(g^{M_1+i}(x_0)) ) <\epsilon$ for all $0\leq i\leq N-\lceil\frac{N}{2}\rceil-M_1-1$,
     \item[g2.] $D(\iota_{\cH}((\tilde{p}_{j})_{j = 0}^{i-1})(\bar{y}_1) < e^{-2\epsilon_3 i}$ for all $N_3\leq i\leq N-\lceil\frac{N}{2}\rceil-M_1-1$,
     \item[g3.] $\frac{D \iota_{\cH}((\tilde{p}_j)_{j=0}^{i-1})(\bar{y}') }{D \iota_{\cH}((\tilde{p}_j)_{j=0}^{i-1})(\bar{y}_1) } \in (e^{-\min(\eta,\epsilon_3) i}, e^{\min(\eta,\epsilon_3) i})$  for any $0 \leq i\leq N-\lceil \frac{N}{2}\rceil-M_1-1$ and any $\bar{y}'\in (\bar{y}_1-r_1, \bar{y} + r_1)$,
     \item[g4.] $\iota_{\cH}((\tilde{p}_j)_{j=0}^{N-\lceil \frac{N}{2}\rceil-M_1-1})(\bar{y}_1)  = ( f^{N-\lceil \frac{N}{2}\rceil-M_1})_{g^{M_1}(x_0)}(\bar{y}_1)$.
 \end{itemize}

 Without loss of generality we may assume that the lifts of $\bar{y}_1$, $\bar{y}'_1$, denoted by $y_1, y'_1 \in \R$ respectively, satisfy that
\aryst
 \vert (F^{M_1})_{x_0}(0)-y_1\vert, \vert (F^{M_2})_{x_0}(0)-y'_1\vert <2.
 \earyst
We denote by $\tilde{P}_i$ the unique lift of $\iota_{\cH}(\tilde{p}_i)$ close to $F_{g^{M_1 + i}(x_0)}$ for every $0 \leq i \leq N -\lceil\frac{N}{2}\rceil- M_1- 1$. Then by $r_0 < r_1$ and (g3), for any $1\leq i\leq N -\lceil\frac{N}{2}\rceil- M_1$ we have
\begin{equation}
\frac{D(\tilde{P}_{i-1}\cdots\tilde{P}_0)(y')}{D(\tilde{P}_{i-1}\cdots\tilde{P}_0)(y_1)}
\in (e^{-i\min(\eta,\epsilon_3)},e^{i\min(\eta,\epsilon_3)}), \ \forall y'\in(y_1-r_0,y_1+r_0).
\label{8.6.9}
\end{equation}
By Lemma \ref{lemma 67} for $(r_0, y_1, y'_1)$ in place of $(\epsilon, z_+, z_-)$, 
we obtain $y'_0\in (0,1)$ and $(\hat{p}_{M_2},\cdots, \hat{p}_{M_1-1})\in  {\cal H}^{M_1-M_2}$ such that $d_{\cH}(\hat{p}_i,H( g^i(x_0)) )<r_0\leq \epsilon$ for all $M_2\leq i\leq M_1-1$, and, denote by $\hat{P}_i$ the lift of $\iota_{\cH}(\hat{p}_i)$ close to $F_{g^{i}(x_0)}$, we have
\begin{itemize}
	\item[Case I.] either 
		\ary
		\hat{p}_{M_1-1}\cdots \hat{p}_0[0,y'_0) &\subset (y_1-r_0,y_1 + r_0), \label{8.6.10}\\
		\hat{p}^{-1}_{M_2}\cdots \hat{p}_{-1}^{-1}[y'_0,1) &\subset (y'_1-r_0, y'_1+ r_0);
		\eary
	\item[Case II.] or 
		\ary
		\hat{p}_{M_1-1}\cdots \hat{p}_0[y'_0,1) &\subset (y_1-r_0, y_1+r_0), \\
		\hat{p}^{-1}_{M_2}\cdots \hat{p}_{-1}^{-1}[0,y'_0) &\subset (y'_1-r_0, y'_1 +r_0).
		\eary
\end{itemize}
We will only detail the proof in Case  (I), as the other case follows from a similar argument.

We now define $p_{\lceil \frac{N}{2}\rceil}, \cdots,p_{N-1}$.  Take an arbitrary integer
\ary 
m_1\in (\frac{9\log D (N - \lceil N/2 \rceil)}{9\log D+L_+(f)-L_-(f)},\frac{(9\log D+\eta) (N - \lceil N/2 \rceil)}{9\log D+L_+(f)-L_-(f)}).
\label{8.6.11}
\eary
Define
\aryst
x_1 = g^{M_1}(x_0), && x_2=g^{m_1}(x_1),\\
x_3 =g^{m_2}(x_2), && m_3=N-\lceil \frac{N}{2} \rceil -M_1-m_1-m_2.
\earyst
By direct computations and by letting $N_4$ be sufficiently large, we see that
\begin{equation}
m_3 \in ( \frac{ (L_+(f)-L_-(f)-2\eta) (N - \lceil N/2 \rceil) }{9\log D+L_+(f)-L_-(f)}, \frac{(L_+(f)-L_-(f) - \eta) (N - \lceil N/2 \rceil)}{9\log D+L_+(f)-L_-(f)}). \label{8.6.12}
\end{equation}
Then we have
\ary \label{eq sumofN2m2}
N_2 + m_2 = N - \lceil N/2 \rceil - m_1 - m_3 \in ( 0, \frac{ 2\eta(N - \lceil N/2 \rceil) }{9\log D+L_+(f)-L_-(f)}).
\eary

We define
\ary
p_{\lceil\frac{N}{2}\rceil +i} &=& \hat{p}_i,  \ \forall 0\leq i\leq M_1-1, \label{8.6.13}\\
p_{\lceil\frac{N}{2}\rceil+M_1+i} &=&\tilde{p}_i,  \  \forall 0\leq i\leq m_1-1, \label{8.6.14}\\
p_{\lceil\frac{N}{2}\rceil +M_1+m_1+m_2+i} &=&H( g^i(x_3) ), \ \forall 0\leq i\leq m_3-1, \label{8.6.15}
\eary

By (\ref{8.6.11}) and by letting $N_4$ be sufficiently large,  we have $m_1 > N_3$.  Then by (g2) and \eqref{8.6.9}, we have  
\ary
\tilde{P}_{m_1-1}\cdots\tilde{P}_0(y_1-r_0,y_1+r_0) & \subset& (F^{m_1})_{x_1}(y_1) + e^{\min(\eta,\epsilon_3)m_1}D(\tilde{P}_{m_1-1}\cdots\tilde{P}_0)(y_1)(-r_0,r_0) \nonumber \\
  & \subset&  (F^{m_1})_{x_1}(y_1) + e^{-\epsilon_3 m_1}(-r_0,r_0). \label{8.6.16}
 \eary
 By $r_0 \leq r_2$ and Lemma \ref{lemma 69} (Forward expansion) for $(x_3, m_3)$ in place of $(x, N)$, there exists $y_3\in\mathbb{R}$, such that
\begin{equation}
D(F^{m_3})_{x_3}(y') > e^{-\eta m_3}, \ \forall y'\in (y_3 -D^{-m_3}r_0, y_3+ D^{-m_3}r_0).
\label{8.6.17}
\end{equation}
Without loss of generality, we can choose $y_3$ so that $\vert (F^{m_1+m_2})_{x_1} (y_1) -  y_3\vert < 2$.
By \eqref{eq m2}, we have $(x_2,  (F^{m_1})_{x_1}(y_1) , y_3) \in \Omega_{m_2}(F,  \kappa_1(\Phi(H), \epsilon_0) )$.
 Then by Lemma \ref{lem cancellation} and $m_2 > N_1(\Phi(H), \epsilon_0)$,  we can define $(p_{\lceil\frac{N}{2}\rceil+M_1+m_1},\cdots,  p_{\lceil\frac{N}{2}\rceil+M_1+m_1+m_2-1})$ so that 
\begin{enumerate}
	\item $d_{{\cal H}}(p_{\lceil\frac{N}{2}\rceil+M_1+m_1+i}, H(g^i(x_2)))<\epsilon$ for all $0\leq i\leq m_2-1$,
	\item we have	
	\ary
				P_{\lceil\frac{N}{2}\rceil+M_1+m_1+m_2-1} \cdots P_{\lceil\frac{N}{2}\rceil+M_1}(y_1) = y_3.
			\label{8.6.18}
		\eary
	\end{enumerate}
	Here we denote by $P_{i}$ the unique lift of $\iota_{\cH}(p_i)$ close to $F_{g^{i}(x)}$ for every $\lceil\frac{N}{2}\rceil \leq i  \leq  N  -1$.
Notice that by (1) above, we have
 	\ary
			D^{-m_2} < \|D \iota_{\cH}((p_{\lceil\frac{N}{2}\rceil+M_1+m_1+i})_{i=0}^{m_2-1})\| <D^{m_2}.
			\label{8.6.19}
	\eary

We now estimate $D(P_{N-1}\cdots P_{\lceil\frac{N}{2}\rceil})$ over the interval $[0,y'_0)$. Fix an arbitrary $y'\in  [0,y'_0)$. By (\ref{8.6.10}),(\ref{8.6.13}) we have 
\begin{equation}
	P_{\lceil\frac{N}{2} \rceil +M_1-1}\cdots P_{\lceil\frac{N}{2}\rceil}(y')\in (y_1-r_0, y_1+r_0).\label{8.6.20}
\end{equation}
Then by (\ref{8.6.9}), (\ref{8.6.19}) and (\ref{8.6.18}), we have
\ary
&&P_{\lceil\frac{N}{2}\rceil+M_1+m_1+m_2-1}\cdots P_{\lceil\frac{N}{2}\rceil}(y')
 \nonumber\\
&\in & y_3 + D ^{m_2} e^{\eta m_1} D(P_{\lceil{\frac{N}{2}\rceil +M_1+m_1-1}} \cdots P_{\lceil \frac{N}{2}\rceil+M_1})(y_1) (-r_0, r_0).\label{8.6.21}
\eary
\smallskip
\textbf{(Upper bound)}. It is direct to see that
\begin{equation}\label{8.6.22}
D(P_{\lceil\frac{N}{2}\rceil+M_1-1}\cdots P_{\lceil\frac{N}{2}\rceil})(y') < D^{M_1}. 
\end{equation}
By (\ref{8.6.20}), (g2), (g3), and (\ref{8.6.14}), we have
\ary
&&D(P_{\lceil\frac{N}{2}\rceil+M_1+m_1-1}\cdots P_{\lceil\frac{N}{2}\rceil+M_1})(P_{\lceil\frac{N}{2}\rceil+M_1-1}\cdots P_{\lceil\frac{N}{2}\rceil}(y'))
\label{8.6.23}\\
&<& e^{m_1\epsilon_3} D(P_{\lceil{\frac{N}{2}}\rceil+M_1+m_1-1}\cdots P_{\lceil{\frac{N_2}{2}}\rceil+M_1})(y_1)< e^{-m_1\epsilon_3}. \nonumber
\eary
By (\ref{8.6.12}) and by letting $N_4$ be sufficiently large, we have $m_3 > n_1$.  Then by (\ref{8.6.7}), (\ref{8.6.22}), (\ref{8.6.23}), (\ref{8.6.19}), \eqref{8.6.11} and \eqref{8.6.12}, we have
\aryst
D(P_{N-1}\cdots P_{\lceil\frac{N}{2}\rceil})(y') & <& 
e^{(L_+(f)+\eta)m_3} e^{-m_1\epsilon_3} D^{M_1+m_2},\\
&<&   e^{\frac{1}{3}(L_+(f)-L_-(f))(N-\lceil\frac{N}{2}\rceil)}. 
\earyst
\textbf{(Lower bound)}.  We have
\begin{equation}
D(P_{\lceil\frac{N}{2}\rceil+M_1-1}\cdots P_{\lceil\frac{N}{2}\rceil})(y') >  D^{-M_1}.
\label{8.6.24}
\end{equation}
By (\ref{8.6.23}), it is useful to divide the estimate into the following two cases.
\begin{itemize}
	\item If $D(P_{\lceil{\frac{N}{2}\rceil +M_1+m_1-1}} \cdots P_{\lceil \frac{N}{2}\rceil+M_1})(y_1) > e^{-\frac{L_+(f)-L_-(f)}{4}m_1}$, then by \eqref{8.6.9}, \eqref{8.6.20} and 
	(\ref{8.6.14}), we have
	\aryst
	&& D(P_{\lceil \frac{N}{2} \rceil+M_1+m_1-1}\cdots P_{\lceil\frac{N}{2}\rceil +M_1})(P_{\lceil \frac{N}{2} \rceil+M_1-1}\cdots P_{\lceil\frac{N}{2} \rceil}(y') ) \\
	&>&  
	e^{-m_1\eta}D(P_{\lceil \frac{N}{2} \rceil+M_1+m_1-1}\cdots P_{\lceil\frac{N}{2} \rceil+M_1})(y_1)>  e^{(-\frac{L_+(f)-L_(f)}{4} -\eta)m_1}. 
	\earyst
Then by \eqref{8.6.7} and $m_3 > n_1$, we have
\ary \label{eq lowerbound1}
	 D(P_{N_1}\cdots P_{\lceil\frac{N}{2}}\rceil)(y') 
	> 
	e^{(L_-(f)-\eta)m_3}e^{(-\frac{L_+(f)-L_-(f)}{4} - \eta)m_1} D^{-M_1-m_2}. 
\eary
	\item If $(P_{\lceil \frac{N}{2} \rceil  +M_1+m_1-1}\cdots P_{\lceil\frac{N}{2}\rceil +M_1})(y_1)\leq e^{-\frac{L_+(f)-L_-(f)}{4}m_1}$, then by \eqref{8.6.18}, \eqref{8.6.11} and \eqref{8.6.12} we have
	\aryst
	\mbox{(RHS) of }(\ref{8.6.21}) \subset y_3+ D^{m_2}e^{(\eta-\frac{L_+(f)-L_-(f)}{4})m_1}(-r_0,r_0) \subset y_3+ D^{-m_3}(-r_0,r_0).
	\earyst
\end{itemize}
The last inclusion follows from (\ref{8.6.11}) and (\ref{8.6.12}). 
Then by (\ref{8.6.17}), we have
\aryst
D(P_{N-1} \cdots P_{\lceil\frac{N}{2}\rceil+M_1+m_1+m_2})(P_{\lceil\frac{N}{2}\rceil+M_1+m_1+m_2-1}\cdots P_{\lceil\frac{N}{2}\rceil}(y') ) > e^{-m_3 \eta}.
\earyst
Moreover, by
$ d_{\cal H}(\tilde{p}_i,H(g^{M_1+i}(x_0)))<\epsilon<\epsilon'$ for all $0\leq i\leq m_1-1$, by $m_1>n_1$ and by the choice of $\epsilon'$, we have
\aryst
D(P_{\lceil \frac{N}{2} \rceil  +M_1+m_1-1}\cdots P_{\lceil\frac{N}{2}\rceil +M_1})(y'') > e^{(L_-(f)-2\eta)m_1}, \quad \forall y'' \in \mathbb{R}.
\earyst
By combining the above inequalities with (\ref{8.6.15}), (\ref{8.6.19}) and (\ref{8.6.24}), we obtain
\ary  \label{eq lowerbound2}
D(P_{N-1} \cdots P_{\lceil\frac{N}{2} \rceil})(y') > e^{ - \eta m_3} e^{(L_-(f) - 2\eta) m_1} D^{-  M_1 -m_2}.
\eary

By  (\ref{8.6.11}), (\ref{8.6.12}) and \eqref{eq sumofN2m2},  we can deduce from both \eqref{eq lowerbound1} and \eqref{eq lowerbound2} that
\aryst
D(P_{N-1}\cdots P_{\lceil\frac{N}{2}\rceil})(y') > e^{- (1- \eta )\max(L_+(f),-L_-(f))(N-\lceil \frac{N}{2} \rceil)}. 
\earyst

Now, continue to assume that we are under Case I, then we can define $p_0, \cdots, p_{\lceil\frac{N}{2}\rceil-1}$ in a similar way so that for any $y' \in [y'_0, 1)$, we have
\aryst
|\log | D(\iota_{\cH}((p_i)_{i=0}^{\lceil\frac{N}{2}\rceil-1} ) )^{-1}(y') | | \leq (1- \eta )\max(L_+(f),-L_-(f)) \lceil \frac{N}{2} \rceil.
\earyst
It is then straightforward to verify items (1) and (2) of Lemma \ref{lem mainpropertymovableset} for Case I. The proof for Case II follows from a similar argument.
\end{proof}

\subsection*{Acknowledgement}
{\small
This paper was initiated during the Ph.D. period of Z.Z. The authors thank Artur Avila for useful conversations, and thank Rapha\"el Krikorian for asking the question about qpf Arnold circle maps.
J. W. was partially supported by NSFC (Nos. 12071231, 11971246) and the Fundamental Research Funds for the Central University.
 
}

\end{document}